\newcommand{\C}{\mathbb{C}}
\newcommand{\Z}{\mathbb{Z}}
\newcommand{\calL}{\mathcal{L}} %
\newcommand{\calI}{\mathcal{I}}
\newcommand{\calH}{\mathcal{H}} %
\newcommand{\A}{\mathcal{A}}
\newcommand{\calB}{\mathcal{B}}
\newcommand{\calD}{\mathcal{D}}
\newcommand{\calO}{\mathcal{O}} %
\newcommand{\M}{\mathcal{M}}
\newcommand{\calJ}{\mathcal{J}}
\newcommand{\calF}{\mathcal{F}}
\newcommand{\ve}{\varepsilon}
\newcommand{\bs}{\backslash}
\newcommand{\goth}{\mathfrak}
\newcommand{\into}{\hookrightarrow}
\newcommand{\End}{\textup{End}}
\newcommand{\GL}{\textup{GL}}
\newcommand{\SL}{\textup{SL}}
\newcommand{\SO}{\textup{SO}}
\newcommand{\GSO}{\textup{GSO}}
\newcommand{\GSp}{\textup{GSp}}
\newcommand{\Sp}{\textup{Sp}}
\newcommand{\WO}{\textup{WO}}
\renewcommand{\sp}{\goth{sp}}
\renewcommand{\sl}{\goth{sl}}
\newcommand{\Ad}{\textup{Ad}}
\newcommand{\ad}{\textup{ad}}
\newcommand{\calHom}{\textup{Hom}}
\newcommand{\Ind}{\textup{Ind}}
\newcommand{\ind}{\textup{ind}}
\newcommand{\Tr}{\textup{tr}}
\newcommand{\Frac}{\textup{Frac}}
\newtheorem{Theorem}{Theorem}[section]
\newtheorem{Proposition}[Theorem]{Proposition}
\newtheorem{Conjecture}[Theorem]{Conjecture}
\newtheorem{Theorem/Conjecture}[Theorem]{Theorem/Conjecture}
\newtheorem{Lemma}[Theorem]{Lemma}
\newtheorem{Corollary}[Theorem]{Corollary}
\theoremstyle{remark}
\newtheorem*{remark}{Remark}
\numberwithin{equation}{section}
\begin{document}
\title{The Non-Split Bessel Model on $\GSp(2n)$ as an Iwahori-Hecke Algebra Module}
\date{}
\author{Will Grodzicki}
\maketitle

\begin{abstract}
We realize the non-split Bessel model of Novodvorsky and Piatetski-Shapiro in \cite{NPS} as a generalized Gelfand-Graev representation of $\GSp(4)$, as suggested by Kawanaka in \cite{Kaw}. With uniqueness of the model already established in \cite{NPS}, we establish existence of a Bessel model for unramified principal series representations. We then connect the Iwahori-fixed vectors in the Bessel model to a linear character of the Hecke algebra of $\GSp(4)$ following the method outlined more generally in \cite{BBF2}. We use this connection to calculate the image of Iwahori-fixed vectors of unramified principal series in the model, and ultimately provide an explicit alternator expression for the spherical vector in the model. We show that the resulting alternator expression matches previous results of Bump, Friedberg, and Furusawa in \cite{BFF}. We offer the conjecture that a generalized Bessel model on $\GSp(2n)$ retains the uniqueness property estabilished in the case when $n=2$; assuming that this conjecture holds, we extend all of the previously mentioned results to the case where $n>2$, including existence of the model for unramified principal series representations.


\end{abstract}

\section{Preliminaries} \label{sec:intro}

In their 1973 paper \cite{NPS}, Piatetski-Shapiro and Novodvorsky defined a model for irreducible admissible representations of the group $\GSp(4)$ over a $p$-adic field called the Bessel model, and showed that the dimension of such an embedding is at most 1. Our first task will be to define a generalized Bessel model for irreducible admissible representations of $G = \GSp(2n)$, which we believe possesses the same uniqueness properties as the Bessel model on $\GSp(4)$; we offer this as a formal conjecture later in this section. We then prove that each unramified principal series representation of $\GSp(4)$ has a non-split Bessel model, and we provide an explicit integral representation of the corresponding Bessel functional, which we then generalize to $\GSp(2n)$. Finally, we will proceed to our ultimate goal of providing an explicit expression for the Iwahori-fixed vectors in the model. In particular, when $n=2$, the formula that we develop for the spherical function agrees with the formula for the spherical function in the Bessel model on $\SO(5)$ established by Bump, Friedberg, and Furusawa in \cite{BFF}. Assuming the conjectured uniqueness of the Bessel model on $\GSp(2n)$, we are able to extend these results to rank $n$.


Along the way, we will describe how our construction of the Bessel functional fits into a conjectural program for connecting characters of the Iwahori-Hecke algebra $\calH$ of $G$ and multiplicity-free models of principal series representations. This program, formulated by Brubaker, Bump, and Friedberg in \cite{BBF2}, was motivated by the study of the Whittaker and spherical functionals, which it contains as special cases.

We will show momentarily that the most natural way to view the connection between these models and characters of the Iwahori-Hecke algebra is from the perspective of the ``universal principal series.'' Our description of the universal principal series and its structure as an $\calH$-module follows the treatment provided by Haines, Kottwitz, and Prasad in \cite{HKP}.

Although our results in this paper are with regards to $\GSp(2n)$ over a $p$-adic field, we expect that the methods we use to analyze the Bessel model in this context can be used to analyze other models over other algebraic groups. With this in mind, we will place the following discussion in a more general context. In particular, for this section, let $G$ be a split, connected reductive group over a $p$-adic field $F$ with ring of integers $\goth{o}$ and uniformizer $\pi$. Let $k$ denote the residue field $\goth{o}/(\pi)$, and let $q$ denote its cardinality. Let $B$ be a Borel subgroup of $G$ with maximal torus $T$ and unipotent subgroup $U$ such that $B = TU$. Let $\overline{U}$ denote the opposite unipotent of $U$ in $B$. We assume that these subgroups, as well as $G$, are defined over $\goth{o}$. Note that this means that $K = G(\goth{o})$ is a maximal compact subgroup of $G$. Let $J$ denote the Iwahori subgroup, which is the preimage of $B(k)$ under the canonical homomorphism $G(\goth{o})\to G(k)$. Let $\calH$ denote the Iwahori-Hecke algebra of $G$, which is the $\C$-algebra of functions $C_c(J\bs G/J)$, with multiplication given by convolution.


We define the universal principal series $M$ to be the vector space $C_c(T(\goth{o})U\bs G / J)$. Evidently, we can make $M$ into a right $\calH$-module where $\calH$ acts by convolution. Now, observe that $T/T(\goth{o})$ is isomorphic to the cocharacter group $X_{\ast}(T)$ of $G$ under the map that sends $\mu \in X_{\ast}(T)$ to $\mu(\pi) \in T/T(\goth{o})$. We will write $\mu(\pi)$ as $\pi^{\mu}$ throughout this paper. Define $R := C_c(T/T(\goth{o})) = \C[X_{\ast}(T)]$, and regard $R$ as a left $(T/T(\goth{o}))$-module via the inverse of the ``universal'' character $\chi_{\textup{univ}}: \pi^{\mu} \mapsto \pi^{\mu}$. If we use normalized induction to form $\ind_B^G \chi_{\textup{univ}}^{-1}$, and then take its $J$-fixed vectors $\ind_B^G(\chi_{\textup{univ}}^{-1})^J$, then we can see that $M \simeq \ind_B^G(\chi_{\textup{univ}}^{-1})^J$ as right $\calH$-modules; explicitly we have $\eta: M \to \ind_B^G(\chi_{\textup{univ}}^{-1})^J$ where $$\eta(\phi)(g) = \sum_{\mu \in X_{\ast}(T)} \delta_B(\pi^{\mu})^{-1/2} \pi^{\mu} \phi(\pi^{\mu}g).$$ Here we can see the motivation for our terminology: if we're given an unramified principal series obtained from parabolic induction by a character $\chi: T/T(\goth{o}) \to \C^{\times}$, then $\chi$ determines a $\C$-algebra homomorphism $R \to \C$, and $$\C \otimes_R M \simeq \ind_B^G(\chi^{-1})^J,$$ the Iwahori-fixed vectors of our original unramified principal series.

In order to gain a better understanding of the Hecke algebra $\calH$, we are going to make use of an alternate point of view of $M$. First, we note that $M$ is isomorphic to $\calH$ as a free, rank one right $\calH$-module; it has a $\C$-basis made up of the characteristic functions $1_{T(\goth{o})UwJ}$ where $w$ is an element of the affine Weyl group $\widetilde{W}$. The isomorphism from $\calH$ to $M$ is given by the map $h \mapsto 1_{T(\goth{o})UJ} \ast h$. We can define a left action of $\calH$ on $M$ via this isomorphism: in particular, we identify $h \in \calH$ with the endomorphism $$h: 1_{T(\goth{o})UJ} \ast h' \mapsto 1_{T(\goth{o})UJ} \ast (hh').$$ Using $\eta$, we can transfer this left $\calH$-action to $\ind_B^G(\chi_{\textup{univ}}^{-1})^J$, so that $h \in \calH$ sends $\phi_1 \ast h'$ to $\phi_1 \ast (hh')$, where $\phi_1 = \eta(1_{T(\goth{o})UJ})$. Note that this left action identifies $\calH$ with $\End_{\calH}(M)$. 

Now, if we take the obvious left action of $R$ on $\ind_B^G(\chi_{\textup{univ}}^{-1})^J$ and transfer it via $\eta^{-1}$ to $M$, we see that $R$ embeds into $\End_{\calH}(M)$, and hence embeds into $\calH$. Additionally, the finite Hecke algebra $\calH_0 = C(J\bs K/J)$ is a subalgebra of $\calH$, and there is a vector space isomorphism $\calH \simeq R \otimes_{\C} \calH_0$. While we will often conflate $\pi^{\mu} \in R$ with its embedded image in $\calH$, we would like to point out that the image of $\pi^{\mu}$ is convolution with the characteristic function $1_{J\pi^{\mu}J}$ only when $\mu$ is dominant. We use $T_{s}$ to denote the generator $1_{JsJ}$ of $\calH_0$, where $s$ is a simple reflection in the Weyl group, $W$, of $G$. The generators of $\calH_0$ satisfy the same braid relations that the simple reflections in $W$ satisfy, in addition to satisfying the quadratic relation $$(T_s - q)(T_s + 1) = 0.$$ Finally, to understand $\calH$ in terms of these generators, we need the Bernstein relation, first proved in \cite{Lusz}, which says that, for $\pi^{\mu} \in R$ and $T_s \in \calH_0$, \begin{equation} \label{bernstein} T_s\pi^{\mu} = \pi^{s(\mu)}T_s + (1-q)\frac{\pi^{s(\mu)}-\pi^{\mu}}{1-\pi^{-\alpha^{\vee}}}, \end{equation} where $s = s_{\alpha}$ for a simple root $\alpha$ in the root system $\Phi$ of $G$.

Recall that the spherical function, $\phi^{\circ}$, in $\ind_B^G(\chi_{\textup{univ}}^{-1})^J$ is defined as $$\phi^{\circ}(g) := \delta^{-1/2}(\pi^{\mu})\pi^{-\mu},$$ where $g = tuk$ is the Iwasawa decomposition of $g$ with $u \in U$, $k \in G(\goth{o})$, and $t \in T(F)$ where $t \equiv \pi^{\mu} \in T(F)/T(\goth{o})$ . Using the Iwahori-Bruhat decomposition, we see that $$\phi^{\circ} = \sum_{w \in W} \phi_w,$$ where $\phi_w := \eta(1_{T(\goth{o})UwJ})$. In order to provide an explicit expression for $\phi^{\circ}$ in the Bessel model, we are going to need to use the fact that the spherical function in the model is contained in a submodule isomorphic to $V_{\ve} :=\calH \otimes_{\calH_0} \ve$, where $\ve$ is a linear character of $\calH_0$. 

From the quadratic relation for $\calH_0$, we can see that the only possible eigenvalues for the generators of $\calH_0$ are $-1$ and $q$. The braid relations for $\calH_0$ then imply that we either have two or four linear characters of $\calH_0$, depending on whether or not the Dynkin diagram for $G$ is simply laced. We can see that $V_{\ve} \simeq R$ as vector spaces, so we can transfer the $\calH$-action on $V_{\ve}$ to $R$ via $v_{\ve} \mapsto r$, where $r$ is any element of $R$ and $v_{\ve}$ is the eigenvector of $\calH_0$ corresponding to $\ve$. In practice, our choice of $r$ such that $v_{\ve} \mapsto r$ will be crucial. Roughly stated, a goal of Brubaker, Bump, and Friedberg is to find many examples where, if $\calL$ is an $R$-valued map arising from a unique model, then there is a character $\ve$ of $\calH_0$ and a subgroup $S \subset G$ such that the transformation properties of $\calL$ under $S$ imply that $\calL$ is an $\calH$-map from $M$ to $V_{\ve}$; a key idea here is that the models are connected to the representations of $\calH_0$ via the Springer correspondence - we will discuss this connection further at the end of this section, as well as in Section \ref{sec:springer}.

The simplest examples in this program are the Whittaker and spherical models. If we take $\calL$ to be the $R$-valued spherical functional, uniquely determined up to scalar by the condition that $\calL(k\cdot \phi) = \calL(\phi)$ for all $\phi \in \ind_B^G(\chi_{\textup{univ}}^{-1})$ and $k \in K$, and $\ve$ to be the trivial character on $\calH_0$, then it was shown by Brubaker, Bump, and Friedberg (based on the work of Casselman in \cite{Cas}) that $\calL$ is an $\calH$-intertwiner from $M$ to $V_{\ve}$; in the case where $\calL$ is taken to be the $R$-valued Whittaker functional, uniquely determined up to scalar by the condition that $\calL(\overline{u}\cdot \phi) = \psi(\overline{u})\calL(\phi)$ for all $\phi \in \ind_B^G(\chi_{\textup{univ}}^{-1})$ and $\overline{u} \in \overline{U}$, where $\psi$ is a non-degenerate character of $\overline{U}$, it was shown by Brubaker, Bump, and Licata, in \cite{BBL}, that $\calL$ is an $\calH$-intertwiner from $M$ to $V_{\ve}$, where $\ve$ is the sign character of $\calH_0$. Most recently, in \cite{BBF2}, Brubaker, Bump, and Friedberg showed that the Bessel functional on the doubly-laced group $\SO(2n+1)$ is an $\calH$-intertwiner from $M$ to $V_{\ve}$  in the manner described above; in this case, $\ve$ is the character of $\calH_0$ that acts by $-1$ on long simple roots and by $q$ on short simple roots.


In general, we start with a subgroup $S$ of $G$ and a linear $\C$-valued character $\psi$ of $S$, and we look for an $R$-module homomorphism $\calL: \ind_B^G(\chi_{\textup{univ}}^{-1}) \to R$ such that \begin{equation}\label{equiv} \calL(s \cdot \phi) = \psi(s)\calL(\phi) \textup{ for all $s \in S$ and $\phi \in \ind_B^G(\chi_{\textup{univ}}^{-1})$,}\end{equation} where the action of $G$ on $\ind_B^G(\chi_{\textup{univ}}^{-1})$ is given by right translation. In order to find $\calL$, we will use Mackey theory. In the case where $F$ is a finite field, Mackey theory tells us that the space of $R$-module homomorphisms satisfying \eqref{equiv} is in bijection with the vector space of functions $\Delta: G \to R$ that satisfy the equivariance properties \begin{equation}\label{finitemackey} \Delta(sgb) = \psi(s)\Delta(g)\chi_{\textup{univ}}^{-1}(b) \end{equation} for all $s \in S$, $b \in B$; here we are thinking of $\psi$ as taking values in $R$, since $R$ is a commutative $\C$-algebra with $\C$ included in it. 

When $F$ is a $p$-adic field, Mackey theory tells us that the space of $R$-module homomorphisms satisfying \eqref{equiv} is in bijection with the vector space of \emph{distributions} satisfying \eqref{finitemackey}.\footnote{In practice, for the models that we are considering, any nonzero $\Delta$ satisfying \eqref{finitemackey} is defined on an open set, so that, in these cases, such $\Delta$ are, in fact, functions.} If such a $\Delta$ exists, we get the corresponding $R$-module homomorphism $\calL$ from the convolution $$\calL(\phi)(g) = \int_{B \bs G} \Delta(h^{-1})\phi(hg)\,dh.$$ If such an $\calL$ exists then the space $\Ind_S^G \psi$ is called a model for $\ind_B^G(\chi_{\textup{univ}}^{-1})$ - we say that the model is unique for $\ind_B^G(\chi_{\textup{univ}}^{-1})$ if the space $\calHom_G( \ind_B^G( \chi_{\textup{univ}}^{-1}), \Ind_S^G \psi)$ is one-dimensional, i.e.~if the space of functionals satisfying \eqref{equiv} is one-dimensional.

Based on the formalism of \cite{BBF2}, it can be shown that if $\calL$ is restricted to the space of Iwahori-fixed vectors, $\ind_B^G(\chi_{\textup{univ}}^{-1})^J$, then $\calL$ induces a left $\calH$-module structure on its image. In particular, the group algebra $R$ embedded in $\calH$, as described earlier, acts on the image of $\calL$ by translation. Since $R \simeq \Ind_{\calH_0}^{\calH}\ve$ as vector spaces if $\ve$ is a linear character of $\calH_0$, the following conjecture of Brubaker, Bump and Friedberg is natural:


\begin{Conjecture}[\cite{BBF2}]\label{conjecture}
Let $\calL$ be an $R$-valued linear map on $\ind_B^G(\chi_{\textup{univ}}^{-1})$ obtained from a unique model. Then $\calL$ is an $\calH$-map from $M$ to $V_{\ve} = \Ind_{\calH_0}^{\calH} \ve$ for some choice of linear character $\ve$ of $\calH_0$ and the following diagram commutes:

\begin{equation}\label{diagram}
\begin{tikzpicture}[>=angle 90]
\matrix(a)[matrix of math nodes,
row sep=3em, column sep=4em,
text height=1.5ex, text depth=0.25ex]
{\ind_B^G(\chi_{\textup{univ}}^{-1})^{J} &\\ M \simeq \calH&V_{\ve} \\};
\path[dashed,->,font=\scriptsize]
(a-1-1) edge node[above]{$\calL$}
(a-2-2);
\path[->,font=\scriptsize]
(a-2-1) edge node[left]{$\eta$}
(a-1-1);
\path[->,font=\scriptsize]
(a-2-1) edge node[below]{$\calF_{v}$}
 (a-2-2);
\end{tikzpicture}
\end{equation}

\noindent with $v_{\ve} := \calL(\phi_1)$ and $\calF_{v_{\ve}}: h \mapsto h\cdot v_{\ve}$ where $h$ acts on $v_{\ve}$ according to the module structure on $V_{\ve}$. 
\end{Conjecture}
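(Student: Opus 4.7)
The plan is to reduce the conjecture to two assertions, from which commutativity of the diagram follows automatically: (i) the restriction of $\calL$ to $M$ is $R$-linear, and (ii) the vector $v_\ve := \calL(\phi_1)$ is a common $\calH_0$-eigenvector whose eigenvalues $\ve(T_s)$ for simple reflections $s$ assemble into a linear character $\ve$ of $\calH_0$. Given (i) and (ii), the map $\calF_{v_\ve}: M \simeq \calH \to V_\ve$, $h \mapsto h \cdot v_\ve$, is a well-defined $\calH$-module map sending $1$ to $v_\ve$; since $\calL \circ \eta$ also sends $1_{T(\goth{o})UJ}$ to $v_\ve$, is $R$-linear by (i), and satisfies the correct $\calH_0$-eigenvalue relation by (ii), the two $\C$-linear maps agree on a generating set of $\calH \simeq R \otimes_\C \calH_0$ and therefore coincide.

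Claim (i) is formal: since $\calL$ is built from integration $\calL(\phi)(g) = \int_{B\bs G} \Delta(h^{-1}) \phi(hg)\,dh$ against an $R$-valued distribution $\Delta$, and the $R$-action on $\ind_B^G(\chi_{\textup{univ}}^{-1})$ is pointwise multiplication of the target $R$, $R$-linearity of $\calL$ follows from $R$-linearity of integration. For (ii), I would exploit the hypothesized uniqueness of the model. Left convolution by $T_s \in \calH_0$ commutes with the right $\calH$-action on $M$ and so is a right $\calH$-module endomorphism of $M$; by averaging against the Iwahori projector, this should extend to a $G$-equivariant operator on the ambient representation $\ind_B^G(\chi_{\textup{univ}}^{-1})$. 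Composing with $\calL$ yields another $(S,\psi)$-equivariant functional, and uniqueness of the model then forces $\calL \circ (T_s \cdot -) = \lambda_s \cdot \calL$ for some $\lambda_s \in R$. The quadratic relation $(T_s - q)(T_s + 1) = 0$ constrains $\lambda_s \in \{-1, q\}$, and the braid relations among the $T_s$ imply that the $\lambda_s$ assemble into a character $\ve$ of $\calH_0$.

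The main obstacle is rigorously executing this uniqueness step: extending left convolution by $T_s$ from $M$ to a $G$-equivariant endomorphism of the ambient $\ind_B^G(\chi_{\textup{univ}}^{-1})$ so that uniqueness applies, and showing the resulting proportionality constant $\lambda_s$ lies in $R$ (rather than merely in $\Frac(R)$). While this is expected to hold in general, a fully abstract argument likely requires nontrivial input from intertwining operator theory or the Bernstein center. In practice, and in the Bessel setting that occupies the rest of this paper, one circumvents this difficulty by computing $\calL(\phi_w)$ directly for each $w \in W$ and verifying the $\calH_0$-eigenvalue and braid relations by explicit calculation.
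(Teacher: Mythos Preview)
First, note that the statement you are attempting to prove is stated in the paper as a \emph{conjecture} (due to \cite{BBF2}); the paper does not prove it in general. What the paper does prove is the specific instance $G=\GSp(2n)$, $\calL=\calB$ (Theorem~\ref{thm:main}), and its method there---via the principal series intertwiners $A_{s_\alpha}$ and the identity \eqref{heckeintertwiner}---is quite different from your outline.

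More importantly, your step (ii) contains a genuine error, not merely a gap. You assert that uniqueness of the model forces $\calL\circ(T_s\cdot{-})=\lambda_s\,\calL$ for some fixed $\lambda_s\in R$. This is incompatible with your own step (i). If $\calL$ is left $R$-linear and $\calL(T_s\cdot\phi)=\lambda_s\calL(\phi)$ for all $\phi$, then applying both to $\phi=\pi^{\mu}\phi_1$ and using the Bernstein relation \eqref{bernstein} gives
\[
\lambda_s\,\pi^{\mu}v_\ve \;=\; \pi^{s(\mu)}\lambda_s v_\ve + (1-q)\,\frac{\pi^{s(\mu)}-\pi^{\mu}}{1-\pi^{-\alpha^\vee}}\,v_\ve,
\]
which forces $\lambda_s=(q-1)/(1-\pi^{-\alpha^\vee})$, contradicting your conclusion that $\lambda_s\in\{-1,q\}$ from the quadratic relation.

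The underlying reason is that the left $T_s$-action on $M$ does \emph{not} arise from an $R$-linear $G$-endomorphism of $\ind_B^G(\chi_{\textup{univ}}^{-1})$ to which uniqueness can be applied in the way you suggest. What \emph{is} a $G$-intertwiner is the principal series operator $A_{s_\alpha}$, related to $T_{s_\alpha}$ by \eqref{heckeintertwiner}; but $A_{s_\alpha}$ is $s_\alpha$-semilinear over $R$, so uniqueness yields $\calL\circ A_{s_\alpha}=c_\alpha\,(s_\alpha\circ\calL)$, not a scalar multiple of $\calL$. Unwinding \eqref{heckeintertwiner} then shows that $\calL(T_{s_\alpha}\cdot\phi)$ is a specific $\Frac(R)$-combination of $\calL(\phi)$ \emph{and} $(s_\alpha\circ\calL)(\phi)$ (exactly formula \eqref{heckeoperator}), and matching this to the $V_\ve$-action is what determines $\ve(T_{s_\alpha})$. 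This is precisely the route the paper takes in Section~\ref{sec:intertwiner}, where the constants $c_\alpha$ are computed explicitly (Proposition~\ref{intertwiners}); the presence of the Weyl twist $s_\alpha\circ\calL$ is the missing ingredient in your sketch, and it cannot be avoided.
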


Of course, such an $\calH$-map $\calL$ is guaranteed to exist since $\calF_{v_{\ve}}$ and $\eta$ are isomorphisms; rather, the dotted line is meant to reiterate the point made earlier that we are looking for a subgroup such that the transformation properties of $\calL$ under this subgroup imply that $\calL$ is an $\calH$-map to $V_{\ve}$. 

One promising set of models that appear to fit into this picture are the ``generalized Gelfand-Graev representations'' introduced by Kawanaka in \cite{Kaw} - these models are classified by nilpotent elements of the Lie algebra of $U$, and the subgroup under which $\calL$ transforms is connected to the associated nilpotent element via the Kirillov orbit method. The particular appeal of this family of representations lies in their conjectured low-multiplicity properties. In particular, we are inspired by Furusawa's use of the Bessel model on $\SO(2n+1)$ in his construction of the standard $L$-function on $\SO(2n+1) \times \GL(n)$ in Section 6 of \cite{BFF} and believe that we will be able to use these models to construct new integral representations of $L$-functions. We provide a more detailed description of generalized Gelfand-Graev representations and their connection to the program described above in Section \ref{sec:gggr}.


In this paper, we will realize the Bessel model on $\GSp(2n)$ as a generalized Gelfand-Graev model in Section \ref{sec:besselmodel}, and, assuming that $\ind_B^G (\chi_{\textup{univ}}^{-1})$ embeds uniquely into the model, we will show in Section \ref{sec:intertwiner} that the associated Bessel functional provides another example of an $\calH$-map $\calL$ as described in Conjecture \ref{conjecture}. We now explicitly state the uniqueness assumption that we are placing on the model:

\begin{Theorem/Conjecture}\label{conjecture2}
There is a unique embedding of $\ind_B^G (\chi_{\textup{univ}}^{-1})$ into the generalized Bessel model, $\Ind_{\overline{U}_AZ_L}^G \widetilde{\psi}_A$ (as defined in Section \ref{sec:bessel}).
\end{Theorem/Conjecture}

This uniqueness condition was verified in the rank 2 case in \cite{NPS}; we only need it for the Theorem to hold for rank $n$. We will provide existence in the rank $n$ case in Section \ref{sec:mackey}. While the symplectic version of the Bessel model has sat untouched since \cite{NPS}, the Bessel model in the odd-orthogonal case was proved to satisfy this uniqueness condition in \cite{GGP} (Corollary 15.3). Indeed, in addition to the conjecture above, we suspect that the Bessel model for $\GSp(2n)$ ($n>2$) has a similar multiplicity one property. 

Our main theorem in this paper is the following:

\begin{Theorem}\label{thm:main}
Let $G = \GSp(2n)$ and let $\ve$ be the character of $\calH_0$ that acts by multiplication by $-1$ on long simple roots and acts by $q$ on short simple roots. Let $V_{\ve} = \Ind_{\calH_0}^{\calH} \ve$. Then, assuming Theorem/Conjecture \ref{conjecture2}, the diagram \eqref{diagram} commutes by taking $v_{\ve} = \pi^{\rho_{\ve}^{\vee}}$, where $\rho_{\ve}$ is half of the sum of the long positive roots; and by taking $\calL= \calB$, the non-split Bessel functional (originally defined on $\GSp(4)$ by Piatetski-Shapiro and Novodvorsky).
\end{Theorem}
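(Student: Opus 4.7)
My plan is to reduce the commutativity of the diagram \eqref{diagram} to verifying that $\calB$, restricted to $\ind_B^G(\chi_{\textup{univ}}^{-1})^J$, is a left $\calH$-module homomorphism and then tracking where it sends the distinguished vector $\phi_1$. Since the right $\calH$-action on $M$ was defined so that $M \simeq \calH$ as rank-one right $\calH$-modules, any left $\calH$-map $\calL$ from $\ind_B^G(\chi_{\textup{univ}}^{-1})^J$ (viewed as $M$) to $V_\ve$ is uniquely determined by the image of $\phi_1$; Theorem/Conjecture \ref{conjecture2} guarantees that the space of such maps is one-dimensional, so once we exhibit $\calB$ as one such map and identify $\calB(\phi_1) = \pi^{\rho_\ve^\vee}$, the diagram must commute with $\calF_{v_\ve}$. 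Using the vector space decomposition $\calH \simeq R \otimes_\C \calH_0$, it suffices to verify $\calH$-equivariance on the two kinds of generators: elements $\pi^\mu \in R$, and the Hecke generators $T_s$ for simple reflections $s$.

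The $R$-equivariance is the easier half and should fall out of the definition. Because $\calB$ is built via Mackey theory from an $R$-valued distribution $\Delta$ satisfying \eqref{finitemackey}, and because the left action of $\pi^\mu \in R$ on $\ind_B^G(\chi_{\textup{univ}}^{-1})$ is by scalar multiplication by $\pi^\mu$ on the $R$-coefficient side, the convolution integral $\calL(\phi)(g) = \int_{B\bs G} \Delta(h^{-1})\phi(hg)\,dh$ commutes with this scalar action on the nose. On $V_\ve \simeq R$, the embedded image of $\pi^\mu$ acts by multiplication by $\pi^\mu$, so matching is immediate.

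The main obstacle, and the heart of the proof, is the $\calH_0$-equivariance: I would need to show $\calB(\phi \ast T_s) = \ve(T_s)\calB(\phi)$ as elements of $R$ for each simple reflection $s$, where $\ve(T_s) = -1$ if $s = s_\alpha$ with $\alpha$ long and $\ve(T_s) = q$ if $\alpha$ is short. The natural tool is the standard intertwining operator $\mathcal{A}_s$ on $\ind_B^G(\chi_{\textup{univ}}^{-1})$: one writes $T_s$ in terms of $\mathcal{A}_s$ modulo a Bernstein-type correction term (cf.\ \eqref{bernstein}), then computes $\calB(\mathcal{A}_s \phi)$ using a change of variables in the Bessel integral and the equivariance \eqref{equiv} of the Bessel functional under $\overline{U}_A Z_L$. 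For short simple roots, the Bessel character $\widetilde{\psi}_A$ is trivial on the corresponding root subgroup, so the calculation mimics the spherical case and yields the eigenvalue $q$. For the long simple root, $\widetilde{\psi}_A$ is non-trivial on the associated root group, and this non-triviality is precisely what forces the eigenvalue $-1$; I would expect a $c_\alpha$-type Gindikin--Karpelevich factor to appear and combine with the Bernstein correction to give exactly $\ve(T_s) = -1$. Doing this carefully while tracking normalizations is where the bulk of the work lies, and where I would anticipate the sign conventions and the comparison with the rank-one intertwiner computation becoming delicate; in the $\GSp(4)$ case this can be cross-checked against \cite{BFF}, and the rank-$n$ case then reduces to the same rank-one computation along each simple root, so most of the difficulty is concentrated in a single simple-root calculation.

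Finally, to identify $v_\ve = \calB(\phi_1)$, I would use that $\phi_1$ is supported (up to Iwasawa decomposition) on $B\cdot J$, so $\calB(\phi_1)$ becomes an integral of $\Delta$ over a small set intersecting the support of the Bessel distribution; an Iwahori factorization argument reduces this to an elementary computation, and the normalizing factor $\delta_B^{1/2}$ built into $\eta$ yields $\pi^{\rho_\ve^\vee}$ (with the long-root $\rho_\ve$ appearing because only the long-root part of $\delta_B$ survives once the contributions trivial under $\widetilde{\psi}_A$ are accounted for). Combining the $R$-linearity, the two eigenvalue calculations, and this base-point evaluation establishes both the $\calH$-map property and the identification $v_\ve = \pi^{\rho_\ve^\vee}$, completing the proof of Theorem \ref{thm:main}.
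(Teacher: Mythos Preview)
Your overall architecture is right---reduce to generators of $\calH$, handle $R$-equivariance by inspection, treat the $T_s$'s via intertwining operators, and compute $\calB(\phi_1)$ separately---and this matches the paper's approach. However, there is a genuine conceptual gap in your formulation of what must be verified for the $T_s$'s.

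You write that the goal is to show $\calB(\phi \ast T_s) = \ve(T_s)\calB(\phi)$. This is wrong on two counts. First, the relevant action is the \emph{left} $\calH$-action on $M$ (defined via $h\cdot(\phi_1\ast h') = \phi_1\ast(hh')$), not right convolution; these coincide on $\phi_1$ but not on general Iwahori-fixed vectors. Second, and more seriously, the left action of $T_s$ on $V_\ve \simeq R$ is \emph{not} scalar multiplication by $\ve(T_s)$ except on the eigenvector $v_\ve$ itself. On a general $\pi^\mu v_\ve$ the Bernstein relation \eqref{bernstein} forces the Demazure--Lusztig-type operator of \eqref{heckeoperator}, which mixes $f$ and $f^{s_\alpha}$ with $R$-valued coefficients. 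So what you must actually verify is that $\calB(T_s\cdot\phi)$, expressed as an element of $R$, equals the operator \eqref{heckeoperator} applied to $\calB(\phi)$; ``eigenvalue $q$'' and ``eigenvalue $-1$'' are not the targets.

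The paper closes this gap as follows: write $T_s$ in terms of the normalized intertwiner $A_s$ via \eqref{heckeintertwiner} (not \eqref{bernstein}), then invoke the uniqueness hypothesis (Theorem/Conjecture~\ref{conjecture2}) to conclude that $\calB\circ A_{s_\alpha}$, being again a Bessel functional for the twisted character, must equal $c_\alpha\,(s_\alpha\circ\calB)$ for some scalar $c_\alpha\in R$. This step---passing from $\calB\circ A_s$ to a scalar times the \emph{Weyl-twisted} functional $s_\alpha\circ\calB$---is the place uniqueness is actually used in the proof, and you do not mention it. The constants $c_{\alpha_i} = 1 - q^{-1}\pi^{\alpha_i^\vee}$ (short) and $c_{\alpha_n} = \pi^{\alpha_n^\vee} - q^{-1}$ (long) are then computed directly (Proposition~\ref{intertwiners}), and plugging back into \eqref{heckeintertwiner} yields exactly the operator \eqref{heckeoperator} with the correct $\ve(T_s)$. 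Your intuition that the short-root case ``mimics the spherical case'' and the long-root case ``mimics the Whittaker case'' is correct at the level of these $c_\alpha$ computations, but the output is an intertwining constant, not an eigenvalue.

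Finally, your explanation of why $\calB(\phi_1) = \pi^{\rho_\ve^\vee}$ is off: it is not that ``only the long-root part of $\delta_B$ survives.'' The factor $\pi^{\rho_\ve^\vee}$ is simply inserted by hand as a normalization in the definition of $\calB$ (Theorem~\ref{thm:functional}); Lemma~\ref{besselvalue} then shows the remaining integral is a nonzero measure, normalized to $1$.
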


It should be noted that the split Bessel model should also give rise to a functional fitting into Conjecture \ref{conjecture} - however, in this case we suspect that one can show that this model is related to the sign character of $\calH_0$.


As mentioned above, before we prove this theorem, we will discuss our generalization of the Bessel model of Novodvorsky and Piatetski-Shapiro from $\GSp(4)$ to $\GSp(2n)$ in Section \ref{sec:besselmodel}, and then we will use Mackey theory to prove the existence of a Bessel model for $\ind_B^G (\chi_{\textup{univ}}^{-1})$ in Section \ref{sec:mackey}. We conclude Section \ref{sec:mackey} with an explicit realization of the Bessel functional as an integral. And then, once we prove Theorem \ref{thm:main} in Section \ref{sec:intertwiner}, we will use that result in Section \ref{sec:besspherical} to calculate the images of the Iwahori-fixed vectors $\{\phi_w\}_{w \in W}$ on torus elements in the model $V_{\ve}$, which has not previously appeared in the literature, even for $n=2$. In particular, we prove the following theorem:


\begin{Theorem}\label{thm:iwahori}
For dominant $\lambda$ and fixed $w$, $$\calB(\pi^{-\lambda}\cdot \phi_w) = \frac{1}{m(J\pi^{\lambda}J)} T_w \pi^{\lambda} \cdot v_{\ve},$$ where the action of $T$ on $\ind_B^G(\chi_{\textup{univ}}^{-1})$ is by right translation and where the action of $T_w\pi^{\lambda}$ on $v_{\ve}$ is the left action on $v_{\ve}$ appearing in the definition of $\calB$.
\end{Theorem}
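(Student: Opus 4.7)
The plan combines the $G$-equivariance of $\calB$ with the Hecke algebra intertwining of Theorem~\ref{thm:main}. Since $\calB$ is defined by the integral $\calB(\phi)(g) = \int_{B \bs G} \Delta(h^{-1}) \phi(hg) \, dh$, the substitution $g \mapsto g\pi^{-\lambda}$ immediately yields $\calB(\pi^{-\lambda} \cdot \phi_w) = R_{\pi^{-\lambda}} \calB(\phi_w)$, and Theorem~\ref{thm:main} identifies $\calB(\phi_w)$ with $T_w \cdot v_\ve \in V_\ve$. The problem therefore reduces to showing that right translation by $\pi^{-\lambda}$ on the image of $\calB$ (viewed inside $\Ind_S^G \psi$) corresponds, under the identification with $V_\ve$, to right multiplication by $\pi^\lambda / m(J\pi^\lambda J)$ in the Bernstein presentation of $\calH$.

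First, I would establish the special case $w = 1$, namely $\calB(\pi^{-\lambda} \cdot \phi_1) = \frac{1}{m(J\pi^\lambda J)} \pi^\lambda \cdot v_\ve$, by direct evaluation of the defining integral. Using $\phi_1 = \eta(1_{T(\goth{o})UJ})$ together with the left $B$-equivariance of $\phi_1$ (which contributes a factor $\chi_{\textup{univ}}^{-1}(\pi^{-\lambda})\delta^{1/2}(\pi^{-\lambda}) = \pi^\lambda \delta^{-1/2}(\pi^\lambda)$) and performing a change of variables $h \mapsto h\pi^\lambda$ on $B \bs G$ (which introduces the modular factor $\delta(\pi^\lambda)^{-1}$), these contributions should combine---under the convention $m(J) = 1$ and the formula $m(J\pi^\lambda J) = \delta(\pi^\lambda)$ for dominant $\lambda$---into the claimed expression, consistent with the Bernstein embedding $\pi^\lambda \mapsto 1_{J\pi^\lambda J}$.

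To pass from $w = 1$ to general $w$, I would use that the right translation by $\pi^{-\lambda}$ on $\calB(M) \subset \Ind_S^G \psi$ descends to an $\calH$-module endomorphism of $V_\ve$ commuting with the left $\calH$-action, since the right $\calH$-action on $M$ (by convolution) corresponds via $\calB$ and $G$-equivariance to integrated right translation. As right multiplication by the Bernstein $\pi^\lambda$ commutes with left multiplication by $T_w$, one obtains $R_{\pi^{-\lambda}}(T_w v_\ve) = T_w \cdot R_{\pi^{-\lambda}} v_\ve = \frac{1}{m(J\pi^\lambda J)} T_w \pi^\lambda v_\ve$, which is precisely the desired identity. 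The hard part will be rigorously identifying single right-translation by $\pi^{-\lambda}$ with the Hecke-theoretic right multiplication by $\pi^\lambda$, scaled by $m(J\pi^\lambda J)^{-1}$: this requires tracking the Iwahori factorization of $J\pi^\lambda J$ and matching the measure-theoretic scaling against the algebraic normalization built into the Bernstein presentation.
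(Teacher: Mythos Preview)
Your overall plan is sound, and you have correctly located the crux: one must identify single right translation by $\pi^{-\lambda}$ with right convolution by $1_{J\pi^{\lambda}J}$, scaled by $m(J\pi^{\lambda}J)^{-1}$, on the image of $\calB$. Once that is done, the algebraic half $\calB(\phi_w\ast 1_{J\pi^{\lambda}J})=T_w\pi^{\lambda}\cdot v_{\ve}$ follows immediately from Theorem~\ref{thm:main}, exactly as you indicate.

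The gap is in how you propose to establish the crux. Your ``direct evaluation'' for $w=1$ will not go through as written. Pulling $\pi^{-\lambda}$ to the left via $B$-equivariance of $\phi_1$ yields $\phi_1(uz\pi^{-\lambda})=\delta^{1/2}\chi_{\textup{univ}}^{-1}(\pi^{-\lambda})\,\phi_1(\pi^{\lambda}uz\pi^{-\lambda})$, but the conjugated argument $\pi^{\lambda}uz\pi^{-\lambda}$ no longer lies in $\overline{U}_A Z_L(\goth{o})$: while $\overline{U}_A$ is $T$-stable, conjugation by $\pi^{\lambda}$ does not preserve $Z_L(\goth{o})$ (the short-root coordinates of $z$ get scaled by nontrivial powers of $\pi$). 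So the change of variables does not close up. Working instead on the abstract side $\int_{B\backslash G}\Delta(h^{-1})\phi(hg)\,dh$ and substituting $h\mapsto h\pi^{\lambda}$ replaces $\Delta(h^{-1})$ by $\Delta(\pi^{-\lambda}h^{-1})$, and since $\pi^{-\lambda}\notin \overline{U}_A Z_L$ there is no equivariance of $\Delta$ to exploit. Your commutation argument for passing to general $w$ has the same defect: right translation by the single element $\pi^{-\lambda}$ does not preserve $J$-fixed vectors, so it is not a priori an $\calH$-endomorphism of $V_{\ve}$; only right \emph{convolution} by $1_{J\pi^{\lambda}J}$ is.

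What is missing is a replacement for the standard Iwahori factorization adapted to the Bessel subgroup, namely
\[
J=(J\cap B)\,(J\cap \overline{U}_A Z_L(\goth{o})).
\]
This is Proposition~\ref{prop:iwahori} in the paper, and it is exactly the tool that makes your crux tractable: writing $j\in J$ as $b\cdot s$ with $s\in \overline{U}_A Z_L(\goth{o})$, one absorbs $s$ into the Bessel integration (with $\widetilde{\psi}_A(s)=1$ since the conductor is $\goth{o}$) and $b$ into $\phi_w$, and thereby shows directly, for \emph{all} $w$ at once, that
\[
\calB(\phi_w\ast 1_{J\pi^{\lambda}J})=m(J\pi^{\lambda}J)\,\calB(\pi^{-\lambda}\cdot\phi_w).
\]
No reduction to $w=1$ is needed. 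So the structure of your argument is right, but the analytic input you need is this Bessel-adapted Iwahori factorization, not a change of variables on $B\backslash G$.
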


Using Theorem \ref{thm:iwahori}, we will also be able to calculate the image of the spherical function in the model, which, in the case when $n=2$, gives a new proof of the same result from \cite{BFF} (in what follows, let $\Phi^+$ denote a choice of positive roots of $\Phi$):


\begin{Theorem}\label{thm:spherical}
Let $\rho$ be the half-sum of the positive roots of $\Phi$, and let $\rho_{\ve}$ be as defined in Theorem \ref{thm:main}. Then, for any dominant coweight $\lambda$, $$\calB(\pi^{-\lambda}\cdot \phi^{\circ}) = \frac{\pi^{-\rho_{\ve}^{\vee}}\displaystyle\prod_{\alpha \in \Phi^+,\, \alpha \textup{ long}} (1-q\pi^{\alpha^{\vee}})} {\pi^{\rho^{\vee}}\displaystyle\prod_{\alpha \in \Phi^+} (1-\pi^{-\alpha^{\vee}})} \A\left(\left( \prod_{\alpha \in \Phi^+,\,\alpha \textup{ short}} (1-q\pi^{\alpha^{\vee}})\right) \pi^{2\rho_{\ve}^{\vee}-\rho^{\vee}+\lambda}\right),$$ where the action of $T$ on $\ind_B^G (\chi_{\textup{univ}}^{-1})$ is by right translation and where $\A$ denotes the standard alternator expression $\A(\pi^{\mu}) = \sum_{w \in W}(-1)^{\ell(w)}w\pi^{\mu}$ with $W$ acting on $X_{\ast}(T)$ in the usual way.
\end{Theorem}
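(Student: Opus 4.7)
The plan is to sum the identity of Theorem \ref{thm:iwahori} over $w \in W$ and reduce the computation to a Hecke-algebra identity in $V_\ve$. Since the Iwahori-Bruhat decomposition gives $\phi^\circ = \sum_{w \in W} \phi_w$, linearity of $\calB$ combined with Theorem \ref{thm:iwahori} yields
$$\calB(\pi^{-\lambda} \cdot \phi^\circ) = \frac{1}{m(J\pi^\lambda J)} \sum_{w \in W} T_w \pi^\lambda \cdot v_\ve$$
in $V_\ve$, so the remaining work is to evaluate $\sum_{w \in W} T_w \pi^\lambda \cdot v_\ve$ using the left $\calH$-action on $V_\ve$, together with the identification $v_\ve \leftrightarrow \pi^{\rho_\ve^\vee}$ coming from the vector space isomorphism $V_\ve \simeq R$.

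The main computation proceeds by iterating the Bernstein relation \eqref{bernstein} along a reduced expression for each $w$. In $V_\ve$, each simple generator $T_s$ acts as a Demazure--Lusztig-type operator: combining \eqref{bernstein} with the fact that $\ve(T_s) \in \{-1, q\}$ according to whether the simple root $\alpha$ is long or short, one gets
$$T_s \cdot (\pi^\mu v_\ve) = \ve(T_s)\, \pi^{s\mu}\, v_\ve + (1-q)\, \frac{\pi^{s\mu} - \pi^\mu}{1 - \pi^{-\alpha^\vee}}\, v_\ve.$$
Iterating these operators for a fixed $w$ produces a closed expression for $T_w \pi^\lambda \cdot v_\ve$ whose leading term is $\ve(T_w)\, \pi^{w\lambda}\, v_\ve$, with correction terms supported on Weyl translates of $\lambda$ with rescaled rational coefficients.

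The crucial step is to recognize the sum $\sum_{w \in W} T_w \pi^\lambda \cdot v_\ve$ as an alternator divided by the Weyl denominator $\prod_{\alpha \in \Phi^+}(1 - \pi^{-\alpha^\vee})$. This is a Macdonald-type formula for the spherical function, which I would establish by adapting Macdonald's derivation (or, equivalently, the Casselman--Shalika recursion) to the mixed character $\ve$. The two distinct eigenvalues of $\ve$ on long versus short simple roots are precisely what produce the separate factors $\prod_{\alpha \text{ long}}(1 - q\pi^{\alpha^\vee})$ and $\prod_{\alpha \text{ short}}(1 - q\pi^{\alpha^\vee})$ in the final formula; the prefactor $\pi^{-\rho_\ve^\vee}/\pi^{\rho^\vee}$, together with the shift of $\pi^\lambda$ to $\pi^{2\rho_\ve^\vee - \rho^\vee + \lambda}$ inside the alternator, arises from combining the identification $v_\ve \leftrightarrow \pi^{\rho_\ve^\vee}$ with the normalization coming from $m(J\pi^\lambda J)$.

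The main obstacle is this combinatorial identity. Iterating a Demazure--Lusztig-type operator that mixes two distinct characters of $\calH_0$ requires careful bookkeeping, and collecting the resulting terms over all $w \in W$ into a single alternator demands a Casselman--Shalika-style telescoping argument. In particular, one must confirm that the cross-terms arising from interactions between long and short roots separate cleanly, so that the final expression contains two distinct products over long and short roots rather than a mixed product. A useful sanity check along the way will be to specialize to $n=2$ and compare against the formula of Bump, Friedberg, and Furusawa in \cite{BFF}.
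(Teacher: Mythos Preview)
Your approach is correct and follows the same arc as the paper: sum Theorem \ref{thm:iwahori} over $w\in W$ to obtain $\calB(\pi^{-\lambda}\cdot\phi^\circ)=\frac{1}{m(J\pi^\lambda J)}\sum_{w\in W}T_w\pi^\lambda\cdot v_\ve$, then evaluate the Hecke-algebra sum in $V_\ve$. The difference is that the paper does not reprove the Demazure--Lusztig identity you flag as the main obstacle; it simply quotes it as Theorem 13 of \cite{BBF2} (stated here as Theorem \ref{thm:alternator}). Concretely, the paper conjugates each generator to $\goth T_{s_\alpha}:=\pi^{\rho_\ve^\vee}T_{s_\alpha}\pi^{-\rho_\ve^\vee}$, so that the identification $v_\ve\leftrightarrow\pi^{\rho_\ve^\vee}$ is absorbed into the operators, rewrites the sum as $\frac{\pi^{-\rho_\ve^\vee}}{m(J\pi^\lambda J)}\sum_w\goth T_w\,\pi^{\lambda+2\rho_\ve^\vee}$, and then applies the ready-made operator identity
\[
\sum_{w\in W}\goth T_w=\Bigl(\prod_{\alpha\in\Phi_{-1}^+}(1-q\pi^{\alpha^\vee})\Bigr)\,\Omega\,\Bigl(\prod_{\alpha\in\Phi_q^+}(1-q\pi^{\alpha^\vee})\Bigr),
\]
where $\Omega=\pi^{-\rho^\vee}\prod_{\alpha\in\Phi^+}(1-\pi^{-\alpha^\vee})^{-1}\A(\pi^{-\rho^\vee}\,\cdot\,)$. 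For the character $\ve$ one has $\Phi_{-1}^+$ equal to the long positive roots and $\Phi_q^+$ equal to the short positive roots, and the formula of Theorem \ref{thm:spherical} drops out. So the ``careful bookkeeping'' and ``Casselman--Shalika-style telescoping'' you anticipate are already packaged in \cite{BBF2}; you do not need to redo them, only to set up the conjugated operators $\goth T_w$ and cite the result.
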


We note here that our proof of Theorem \ref{thm:main} does not rely on prior knowledge of the image of the spherical function in the model - in this way our method of proof differs from the proofs of similar results in \cite{BBF2}. Instead, we will calculate the relevant intertwining constants directly. 

In Section \ref{sec:womodels}, we discuss the fourth character, $\sigma$, of the finite Hecke algebra of $\GSp(2n)$, which acts by multiplication by $q$ on long simple roots and $-1$ on short simple roots, specifically in the case where $n=2$. At this time we do not have a realization of the intertwiner $\calL$ satisfying the diagram \eqref{diagram} that is also defined according to a subgroup transformation, but we have matched the image of the spherical function under $\calF_{v_\sigma}$ for $\sigma$ to the image of the spherical function in the Whittaker-Orthogonal models defined by Bump, Friedberg and Ginzburg in \cite{BFG}. In particular, we prove the following proposition:

\begin{Proposition}\label{prop:shalika}
Let $WO$ be the Whittaker-Orthogonal functional on an unramified principal series representation $\tau$ of $\SO(6)$, such that $\tau$ is a local lifting of an unramified principal series representation of $\Sp(4)$. Then $\calF_{v_{\sigma}}(\pi^{-\lambda}\cdot 1_{T(\goth{o})UK})$ and $\WO(z^{-\lambda}\cdot \phi^{\circ})$ agree, for any dominant coweight $\lambda$.
\end{Proposition}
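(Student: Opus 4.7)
The plan is to evaluate both sides explicitly as elements of $R$ and verify that they agree. For the left-hand side, note that the Iwahori--Bruhat decomposition gives $1_{T(\goth{o})UK} = \sum_{w \in W} 1_{T(\goth{o})UwJ}$, so that $\eta(1_{T(\goth{o})UK}) = \phi^{\circ}$. Consequently $\calF_{v_{\sigma}}(\pi^{-\lambda}\cdot 1_{T(\goth{o})UK})$ is the image of the translated spherical vector under the $\calH$-map to $V_{\sigma}$. Since $\sigma$ differs from $\ve$ only by exchanging the roles of long and short simple roots, the analog of Theorem \ref{thm:main} for $\sigma$ should set $v_{\sigma} = \pi^{\rho_{\sigma}^{\vee}}$ with $\rho_{\sigma}$ now the half-sum of the positive \emph{short} roots, and the Bernstein-relation bookkeeping producing Theorem \ref{thm:spherical} carries over essentially verbatim. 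This yields
\begin{equation*}
\calF_{v_{\sigma}}(\pi^{-\lambda}\cdot 1_{T(\goth{o})UK}) = \frac{\pi^{-\rho_{\sigma}^{\vee}}\displaystyle\prod_{\alpha \in \Phi^+,\,\alpha \text{ short}} (1-q\pi^{\alpha^{\vee}})}{\pi^{\rho^{\vee}}\displaystyle\prod_{\alpha \in \Phi^+} (1-\pi^{-\alpha^{\vee}})}\, \A\!\left(\!\left(\prod_{\alpha \in \Phi^+,\,\alpha \text{ long}} (1-q\pi^{\alpha^{\vee}})\right)\pi^{2\rho_{\sigma}^{\vee}-\rho^{\vee}+\lambda}\right).
\end{equation*}

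For the right-hand side, I would extract from \cite{BFG} the explicit Casselman--Shalika-type formula for $\WO(z^{-\lambda}\cdot\phi^{\circ})$ on the unramified principal series of $\SO(6)$; this is an alternator over $W(D_3)$ with a specific deformation factor involving products over the roots of $D_3$. I would then substitute the Satake parameters dictated by the local lift from $\Sp(4)$ to $\SO(6)$, which identifies the Satake parameters of $\tau$ with those of the given $\Sp(4)$-principal series via an explicit map between the complex tori of the relevant dual groups. Under this substitution, the $W(D_3)$-alternator should reorganize as a product of an additional deformation piece and a $W(C_2)$-alternator, which one can then directly compare to the formula above.

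The main obstacle is precisely this collapse from a $W(D_3)$-alternator to a $W(C_2)$-alternator: since $|W(D_3)| = 24$ while $|W(C_2)| = 8$, one must choose a coset decomposition $W(D_3) = \bigsqcup_{i=1}^{3} W(C_2)\cdot c_i$ and show that the inner sum over the representatives $c_i$ --- after imposing the symmetries among Satake parameters coming from the lift --- collapses into exactly the deformation factor $\prod_{\alpha \text{ short}}(1-q\pi^{\alpha^{\vee}})$ appearing on the left-hand side. This amounts to a root-system compatibility computation: identifying which roots of $D_3$ descend to short versus long roots of $C_2$, and tracking the signs $(-1)^{\ell(w)}$ carefully. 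Once this reduction is carried out, matching the remaining $W(C_2)$-alternator and the outside prefactor against the formula for $\calF_{v_{\sigma}}$ above gives the proposition; as a by-product this will pin down $\sigma$ (rather than $\ve$, the trivial, or the sign character) as the unique character of $\calH_0$ consistent with the Whittaker--Orthogonal model, providing further evidence for a subgroup-transformation realization of $\calF_{v_{\sigma}}$ in the spirit of Conjecture \ref{conjecture}.
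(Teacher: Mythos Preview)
Your approach differs substantially from the paper's, and it has a genuine gap. The paper does not attempt a structural $W(D_3)\to W(C_2)$ alternator collapse at all; instead it simply evaluates both sides explicitly at the single dominant coweight $\lambda=\lambda_1=(1,0)$ (embedded as $(1,0,0)$ for $\SO(6)$). On the $\WO$ side it plugs into the Bump--Friedberg--Ginzburg formula (Theorem~\ref{thm:BFG}), expands the $D_3$-alternators $\A(z^{\rho^\vee})$ and $\A(z^{\rho^\vee}z_1)$ by brute force using square roots $\xi_i$ of the $z_i$, and simplifies to $z_1+z_2-1+z_2^{-1}+z_1^{-1}-q^{-1}$. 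On the $\calF_{v_\sigma}$ side it applies Theorem~\ref{thm:alternator} with $\Phi_{-1}^+=\{\alpha_1,\alpha_1+\alpha_2\}$, $\Phi_q^+=\{\alpha_2,2\alpha_1+\alpha_2\}$, computes $\sum_w\goth T_w\cdot\pi^{\lambda_1}$ and $\sum_w\goth T_w\cdot 1$ explicitly, divides to normalize so that $\calF(1_{T(\goth o)UK})=1$ (matching the $\WO$ normalization in Theorem~\ref{thm:BFG}), and observes the same expression. So the paper's argument is a direct numerical check, not the root-system reduction you outline.

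The conceptual gap in your proposal is the sentence ``the analog of Theorem~\ref{thm:main} for $\sigma$ should set $v_\sigma=\pi^{\rho_\sigma^\vee}$ \ldots\ and the Bernstein-relation bookkeeping producing Theorem~\ref{thm:spherical} carries over essentially verbatim.'' There is no analog of Theorem~\ref{thm:main} for $\sigma$: the paper states explicitly (just before Proposition~\ref{prop:shalika}) that no subgroup-transformation realization of the intertwiner for $\sigma$ is known, so there is no functional $\calL$ whose intertwining constants $c_\alpha$ you could compute. The chain Theorem~\ref{thm:main} $\Rightarrow$ Theorem~\ref{thm:iwahori} $\Rightarrow$ Corollary~\ref{spherical} $\Rightarrow$ Theorem~\ref{thm:spherical} genuinely uses the Bessel functional and does not transfer. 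What \emph{does} transfer is the purely Hecke-algebraic operator identity of Theorem~\ref{thm:alternator}, which is what the paper actually invokes; your displayed formula for $\calF_{v_\sigma}(\pi^{-\lambda}\cdot 1_{T(\goth o)UK})$ can be justified that way, but you must also address the normalization (the paper renormalizes by dividing through by $\calF(1_{T(\goth o)UK})$, which is not $1$ in your convention). Finally, the $W(D_3)\to W(C_2)$ collapse you flag as the ``main obstacle'' is indeed the entire content of your proposed argument, and you have not carried it out; without it the proposal is a plan rather than a proof.
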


Piatetski-Shapiro and Novodvorsky do not provide an explicit integral formula for their functional, so part of our task in proving Theorem \ref{thm:main} is coming up with the correct integral formula for $\calB$. Our method for doing this follows what we believe to be the general method for connecting models of the form $\Ind_S^G \psi$ to characters of $\calH_0$. We will say a bit about this 
in the next section before moving on to the main sections of the paper, which will be focused on the theorems mentioned above. In Section \ref{sec:springer}, we will give further details on this conjectured construction of unique models for characters of $\calH_0$.

We thank Ben Brubaker for many helpful conversations and communications.

\section{Generalized Gelfand-Graev Representations} \label{sec:gggr}


Let $G$ be as in Section \ref{sec:intro}. With notation carried over from Section \ref{sec:intro}, we will let $\goth{g}$ denote the Lie algebra of $G$, and $\goth{u}$ denote the Lie algebra of $U$. Let $f$ denote the bijective $F$-morphism from $U$ to $\goth{u}$.\footnote{Explicit choices of the ``Springer's morphism'' $f$ for classical type and exceptional type are given in Section 1.2 of \cite{Kaw2}.} Following Yamashita in \cite{Yam}, we let $\theta$ where $^{\theta}X = -X^{\top}$ denote the Cartan involution of $\goth{g}$, and let $\goth{u}^{\ast}$ denote the dual space of $\goth{u}$. Then, for $X \in \goth{u}$, we define $X^{\ast} \in \goth{u}^{\ast}$ by \begin{equation} \langle X^{\ast}, Y \rangle = B(Y, {^{\theta}X}), \quad \textup{for } Y \in \goth{u},\end{equation} where $B$ denotes the Killing form of $\goth{g}$.

We believe that the unique models that give rise to an $R$-homomorphism $\calL$ as described in Section \ref{sec:intro} are related to Kawanaka's construction of the ``generalized Gelfand-Graev representations" of $G$ (gGGr) in \cite{Kaw}. Although Kawanaka's results are given in the context of finite groups of Lie type, we believe that they can be suitably adapted for the $p$-adic setting. 

To construct a gGGr, we begin with a nilpotent $\Ad(G)$-orbit in $\goth{u}$ with representative $A$. One can define a $\Z$-grading of $\goth{g}$ according to $A$, \begin{equation} \goth{g} = \bigoplus_i \goth{g}(i)_A, \end{equation} such that $A \in \goth{g}(2)_A$, $\goth{p}_A =  \oplus_{i\geq 0} \goth{g}(i)_A$ is the Lie algebra of a parabolic subgroup $P_A$ of $G$, and $\goth{u}_{i,A} = \oplus_{j\geq i} \goth{g}(j)_A$ ($i\geq 1$) is the Lie algebra of the unipotent subgroup $U_{i,A}$ of $P_A$. Note that $\goth{u}_{i,A}^{\ast}$ can be identified with $\oplus_{j\geq i} \goth{g}(-j)_A$ via $\langle \cdot, \cdot \rangle$. We then use Kirillov's orbit method to form the attached representation $\eta_A$ on $U_A = U_{1,A}$ - this is done by taking the character \begin{equation}\xi_{A}(\exp(Y)) = \xi_0(\langle A^{\ast},Y\rangle),\quad Y \in \goth{u}_{2,A}\end{equation} defined on $U_{2,A}$ and extending it to a character of an intermediate subgroup before inducing to $U_{A}$ (here $\xi_0$ is a non-trivial additive character of $F$).

Let $L_A$ denote the Levi subgroup of $P_A$. This subgroup acts on $U_A$ via conjugation, and hence acts on the unitary dual $\widehat{U}_A$ of $U_A$ via \begin{equation} \ell \cdot [\eta] = [\ell \cdot \eta], \quad (\ell \cdot \eta)(u) = \eta(\ell^{-1}u\ell) \quad (u \in U_A),\end{equation} where $\ell \in L_A$ and $[\eta] \in \widehat{U}_A$ is the equivalence class of the irreducible representation $\eta$ of $U_A$. 

We denote by $Z_L(\eta_A)$ the stabilizer subgroup of the equivalence class of the representation $\eta_A$ in $L_A$. As the following lemma shows, this subgroup is equal to the centralizer, $Z_L(A)$, of $^{\theta}A$ in $L_A$:

\begin{Lemma}[\cite{Yam}, Lemma 2.1]\label{stabilizer}
The subgroup $Z_L(\eta_A)$ coincides with $Z_L(A)$.
\end{Lemma}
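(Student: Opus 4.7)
The plan is to use Kirillov's orbit method to transfer the question about equivalence classes of irreducible representations of $U_A$ to a question about $U_A$-coadjoint orbits in $\goth{u}_A^* \simeq \bigoplus_{j \geq 1} \goth{g}(-j)_A$, and then to exploit the fact that $A^*$ sits in the top graded piece $\goth{g}(-2)_A$ to show that any $L_A$-translate of $A^*$ lying in the same $U_A$-coadjoint orbit must equal $A^*$ on the nose.

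First I would translate the condition $\Ad^*(\ell) A^* = A^*$ into centralizer language. Using $\Ad$-invariance of $B$ together with the defining identity $\langle X^*, Y\rangle = B(Y, {}^{\theta}X)$, one computes $\langle \Ad^*(\ell) A^*, Y\rangle = B(\Ad(\ell^{-1})Y, {}^{\theta}A) = B(Y, \Ad(\ell)\, {}^{\theta}A)$ for $Y \in \goth{u}_A$. Non-degeneracy of the pairing $B$ on $\goth{u}_A \times \bigoplus_{j\geq 1}\goth{g}(-j)_A$ then gives $\Ad^*(\ell) A^* = A^*$ if and only if $\Ad(\ell)\, {}^{\theta}A = {}^{\theta}A$, i.e.\ if and only if $\ell \in Z_L(A)$.

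Next I would invoke the Kirillov orbit dictionary to identify $[\eta_A]$ with the $U_A$-coadjoint orbit $\mathcal{O} = U_A \cdot A^*$; the identification is $L_A$-equivariant, so $[\ell \cdot \eta_A]$ corresponds to $\ell \cdot \mathcal{O} = U_A \cdot (\Ad^*(\ell) A^*)$ (using that $L_A$ normalizes $U_A$). Therefore $\ell \in Z_L(\eta_A)$ if and only if $\Ad^*(\ell) A^*$ and $A^*$ lie in a common $U_A$-orbit, and to finish I need to upgrade this orbit condition to equality. The rigidity input is that for $u = \exp X \in U_A$ with $X \in \bigoplus_{i \geq 1} \goth{g}(i)_A$ and any $Y \in \goth{g}(2)_A$, every term of $\Ad(u^{-1})Y = Y - [X,Y] + \tfrac{1}{2}[X,[X,Y]] - \cdots$ beyond $Y$ itself lies in $\bigoplus_{j \geq 3} \goth{g}(j)_A$, where $A^* \in \goth{g}(-2)_A$ vanishes; hence every element of $\mathcal{O}$ has the same degree-$(-2)$ component as $A^*$. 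On the other hand $L_A$ preserves the grading, so $\Ad^*(\ell) A^*$ is itself concentrated in degree $-2$, and the orbit condition therefore forces $\Ad^*(\ell) A^* = A^*$. Combined with the centralizer rewriting above, this is the desired equality $Z_L(\eta_A) = Z_L(A)$.

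The main obstacle I anticipate is justifying the Kirillov dictionary in the $p$-adic setting with the particular construction Kawanaka uses to produce $\eta_A$ from $\xi_A$: one needs that $[\eta_A]$ depends only on the $U_A$-coadjoint orbit of $A^*$ (independence of the choice of intermediate polarizing subgroup between $U_{2,A}$ and $U_A$), and that the resulting correspondence between $\widehat{U}_A$ and orbits is genuinely $L_A$-equivariant. Over a finite field this is built into Kawanaka's setup; in the $p$-adic case one should be able to pass to the Lie algebra via Springer's morphism $f$ and argue orbit-theoretically, but verifying the $L_A$-equivariance of the polarized induction cleanly is the technically delicate point.
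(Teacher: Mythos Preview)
Your argument is correct and follows the same overall architecture as the paper's proof: translate the stabilizer condition through the Kirillov correspondence to a statement about coadjoint orbits in $\goth{u}_A^{\ast}$, and then use the $\Z$-grading to force orbit equality into pointwise equality.

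The difference is that the paper proves the full description of the coadjoint orbit, namely $[X^{\ast}] = X^{\ast} + \goth{g}(-1)_A$ for $X \in \goth{g}(2)_A$ (its equation~\eqref{eqn:coadjoint}), which requires a dimension count together with Rosenlicht's theorem that such orbits are closed. You bypass this by observing that only the easy containment $[A^{\ast}] \subset A^{\ast} + \goth{g}(-1)_A$ is needed: since $L_A$ preserves the grading, $\Ad^{\ast}(\ell)A^{\ast}$ is concentrated in degree $-2$, and the constancy of the degree $-2$ component along the orbit already pins it down to $A^{\ast}$. This is a genuine simplification---you avoid the closedness/dimension argument entirely. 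What the paper's stronger statement buys is an explicit picture of the full orbit, but for the lemma at hand your shortcut suffices. Your flagged obstacle (the $L_A$-equivariance and well-definedness of the Kirillov dictionary in the $p$-adic setting) is real; the paper simply invokes the correspondence $\nu:\goth{u}_A^{\ast}/U_A \to \widehat{U}_A$ without elaboration, so you are being appropriately cautious where the paper is not.
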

\begin{proof}
By the Ad-invariance of the Killing form, we can see that $\ell^{-1} \cdot [\eta_A] = [\eta_{\Ad( ^{\theta}\ell)A}]$ for any $\ell \in L_A$. If we let $\nu$ denote the Kirillov correspondence $\nu: \goth{u}_A^{\ast}/U_A \to \widehat{U}_A$, then the previous statement is equivalent to the statement $\ell^{-1}\cdot \nu([A^{\ast}]) = \nu([(\Ad( ^{\theta}\ell)A)^{\ast}])$, where $[X^{\ast}]$ denotes the $\Ad^{\ast}(U_A)$-orbit through $X^{\ast}$ in $\goth{u}_A$. Thus, $\ell^{-1}$ (and hence $\ell$) is in $Z_L(\eta_A)$ if and only if $[A^{\ast}] = [(\Ad( ^{\theta}\ell)A)^{\ast}]$. The result will follow if we can show that \begin{equation} \label{eqn:orbit} [A^{\ast}] = [(\Ad( ^{\theta}\ell)A)^{\ast}] \textup{ if and only if } ^{\theta}A = \Ad(\ell) ({^{\theta}A}).\end{equation}

In order to prove this final statement, we first show that \begin{equation}\label{eqn:coadjoint} [X^{\ast}] = X^{\ast} + \goth{g}(-1)_A \textup{ for any } X \in \goth{g}(2)_A,\end{equation} where we are thinking of $\goth{g}(-1)_A$ as being identified with the subspace of $\goth{u}^{\ast}$ consisting of elements that vanish on $\goth{u}_{2,A}$ (\eqref{eqn:coadjoint} is Lemma 1.2.4~in \cite{Kaw2}). The identity \eqref{eqn:coadjoint} is essentially a consequence of the identity $\Ad(u)X = f^{-1}(\ad f(u))X$, where $u\in U_A$ and $X \in \mathfrak{g}$; in order to prove \eqref{eqn:coadjoint}, it will be useful to rewrite the previous identity as in Lemma 1.2.1~in \cite{Kaw2}: \begin{equation}\label{eqn:kaw} \Ad(u)X - (X+d[f(u),X]) \in \bigoplus_{\ell \geq 2i+j} \goth{g}(\ell)_A, \end{equation} where $u \in U_{i,A}$, $X \in \goth{g}(j)_A$, and $d \in F-\{0\}$. Now, if $X \in \goth{u}_A$, then, using the identification of $\goth{u}_A^{\ast}$ with $\oplus_{i>0} \goth{g}(-i)$ and the $\Ad$-invariance of the Killing form, we see that $\Ad^{\ast}(u)X^{\ast} = p(u^{-1}X^{\ast}u)$, where $p$ denotes projection onto $\oplus_{i>0} \goth{g}(-i)_A$.\footnote{The projection map shows up here because $B(Y,W) = 0$ if $Y \in \goth{u}_A$ and $W \in \oplus_{i\geq 0} \goth{g}(i)_A$.} Putting the preceding discussion together with \eqref{eqn:kaw}, we see that $$[X^{\ast}] \subset X^{\ast}+ \goth{g}(-1)_A \textup{ if } X \in \goth{g}(2)_A.$$ It remains to show that this containment is actually an equality.

By Theorem 2 in \cite{Ros}, we know that $[X^{\ast}]$ is closed, so to prove \eqref{eqn:coadjoint} it suffices to check the dimensions of each side. To find $\dim [X^{\ast}]$, we first note that \eqref{eqn:kaw} implies that $\{g \in G \mid g^{-1}X^{\ast}g = X^{\ast}\} \subset \overline{P}_A$, where $\overline{P}_A$ is the opposite parabolic associated to $A$. Thus, if $u \in U_A$ such that $u^{-1}X^{\ast}u = X^{\ast}$, then $u$ is the identity element. Now, since $\Ad^{\ast}(u)X^{\ast} = p(u^{-1}X^{\ast}u)$, we see that, if $u \in U_A$, then $\Ad^{\ast}(u)X^{\ast} = X^{\ast}$ if and only if $[f(u),X^{\ast}] = 0$ or $f(u) \in \mathfrak{u}_{2,A}$ (i.e.~$u \in U_{2,A}$). But, if $[f(u),X^{\ast}] = 0$, then $u^{-1}X^{\ast}u = X^{\ast}$, and hence $u$ is the identity. Hence, $$U_{2,A} = \{ u \in U_A \mid \Ad^{\ast}(u)X^{\ast} = X^{\ast}\},$$ which tells us that $$\dim [X^{\ast}] = \dim U_A - \dim U_{2,A} = \dim \goth{g}(1)_A = \dim (X^{\ast} + \goth{g}(-1)_A),$$ and we have proved \eqref{eqn:coadjoint}. 



Using \eqref{eqn:coadjoint}, we see that $[A^{\ast}] = [(\Ad(^{\theta}\ell)A)^{\ast}]$ if and only if $A^{\ast} = (\Ad(^{\theta}\ell)A)^{\ast}$; this last equality holds if and only if $\ell \in Z_L(A)$, proving \eqref{eqn:orbit}.




\end{proof}


It is natural to extend $\eta_A$ to a representation of $U_A \rtimes Z_L(A)$; our next step, then, is to build a representation $\widetilde{\eta}_{A,\alpha}$ on $U_A\rtimes Z_L(A)$ by taking the tensor product of $\eta_A$ with a representation $\alpha$ of $Z_L(A)$. For each irreducible representation $\alpha$ of $Z_L(A)$, we say that the gGGr associated to the pair $(A,\alpha)$ is $\Gamma_{A,\alpha} := \Ind_{U_AZ_L(A)}^G \widetilde{\eta}_{A,\alpha}$. If the group $G$ is defined over a finite field instead of a $p$-adic field, Kawanaka offers a method of producing gGGr's that contain each unipotent representation with multiplicity one (Conjecture 2.4.5 in \cite{Kaw}). Since the principal series representations are precisely those representations containing a $B$-fixed vector, Kawanaka's conjecture implies that, for each irreducible $\calH_0$-module, there should be a unique gGGr containing it with multiplicity one. Kawanaka's notes after the conjecture suggest that the nilpotent element $A$ used in the construction of a gGGr $\Gamma_{A,\alpha}$ and the irreducible representation of $\calH_0$ contained inside the $B$-fixed vectors of $\Gamma_{A,\alpha}$ are linked via the Springer correspondence. 


Shifting back to the $p$-adic setting, we note that, in \cite{MW}, M\oe glin and Waldspurger give a treatment of those representations - also referred to as gGGr's in \cite{Kaw} - that are constructed by inducing $\eta_A$ from $U_A$ up to $G$ directly. However, one of our goals is to find useful models - for example, as mentioned in Section \ref{sec:intro}, we expect that the gGGr's (as defined in the previous paragraph) will find applications in the construction of integral representations of $L$-functions, in a sense similar to the application of the Bessel model on $\SO(2n+1)$ discussed in Section 6 of \cite{BFF} - and gGGr's of the form $\Gamma_A := \Ind_{U_A}^G \eta_A$ will not have the low-multiplicity properties that we desire. The idea, then, is to decompose $\Gamma_A$ into a direct sum of  gGGr's of the form $\Gamma_{A,\alpha}$ which will have the desired low-multiplicity properties. In the finite field setting, this is exactly what happens, since the stabilizer $Z_L(A)$ is reductive. It is also true that $Z_L(A)$ is reductive when $G$ is defined over a $p$-adic field; we state this result without proof:


\begin{Lemma}[\cite{Car}, Proposition 5.5.9]
The subgroup $Z_L(A)$ is reductive.
\end{Lemma}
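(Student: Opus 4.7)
The plan is to identify $Z_L(A) = Z_{L_A}({^\theta A})$ with the centralizer in $G$ of an $\sl_2$-triple containing $^\theta A$, and then invoke the classical fact that in characteristic zero the centralizer of a reductive subgroup of a connected reductive group is again reductive.

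First, I would apply the Jacobson--Morozov theorem to the nilpotent element $A \in \goth{g}(2)_A$ to produce a semisimple $H \in \goth{g}(0)_A$ and a nilnegative element $f \in \goth{g}(-2)_A$ completing $A$ to an $\sl_2$-triple. Choosing the characteristic $H$ so that $^\theta H = -H$ (as one does in Kawanaka's setup, using a $\theta$-stable Cartan), the $\Z$-grading of $\goth{g}$ used to define $P_A$ coincides with the $\ad(H)$-eigenspace decomposition, and up to a nonzero scalar $f$ may be taken equal to $^\theta A$. Rescaling does not change centralizers in $G$, so we may assume $(A,H,{^\theta A})$ is an honest $\sl_2$-triple. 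In particular $L_A = Z_G(H)$, which is a Levi subgroup of $P_A$ and hence reductive to start with.

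The key step is the identification
\[
Z_{L_A}({^\theta A}) \;=\; Z_G(A) \cap Z_G(H) \cap Z_G({^\theta A}).
\]
The containment $\supseteq$ is immediate. For $\subseteq$, suppose $g \in Z_G(H)$ satisfies $\Ad(g)({^\theta A}) = {^\theta A}$. Then $(\Ad(g)A, H, {^\theta A})$ is again an $\sl_2$-triple with the same $(H,{^\theta A})$, so $\Ad(g)A - A$ lies in $Z_{\goth{g}}({^\theta A}) \cap \goth{g}(2)_A$. By $\sl_2$-representation theory, $Z_{\goth{g}}({^\theta A})$ is spanned by the lowest-weight vectors of the irreducible $\sl_2$-summands of $\goth{g}$, all of which have nonpositive $\ad(H)$-weight; hence $Z_{\goth{g}}({^\theta A}) \cap \goth{g}(2)_A = 0$ and $\Ad(g)A = A$. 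Finally, the $\sl_2$-triple integrates in characteristic zero to an algebraic homomorphism $\varphi: \SL_2 \to G$, and $Z_G(A,H,{^\theta A}) = Z_G(\varphi(\SL_2))$; since $\varphi(\SL_2)$ is a reductive subgroup of the connected reductive group $G$, its centralizer is reductive (Borel--Tits), proving the lemma.

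The main obstacle in carrying this out rigorously is the initial compatibility check: one must verify that the characteristic $H$ grading $\goth{g}$ in Kawanaka's construction can be chosen to sit in a $\theta$-stable $\sl_2$-triple with $A$ and $^\theta A$, so that the grading coming from Kawanaka's setup matches the $\ad(H)$-grading from Jacobson--Morozov. Once this compatibility is established, the remaining steps are formal and follow the standard Bala--Carter / Collingwood--McGovern analysis of nilpotent centralizers.
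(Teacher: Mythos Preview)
Your proposal is correct and is essentially Carter's argument, which is precisely what the paper invokes: the paper states the lemma without proof, citing \cite{Car}, Proposition~5.5.9, and only remarks that the needed Jacobson--Morozov theorem holds over characteristic-zero fields so that Carter's proof carries over unchanged. Your reconstruction---complete the triple $(A,H,{^\theta A})$, identify $Z_{L_A}({^\theta A})$ with the centralizer of the $\sl_2$-image via the lowest-weight-vector argument, and conclude by reductivity of centralizers of reductive subgroups---is exactly Carter's route, and the compatibility issue you flag (that the grading used by Kawanaka is the $\ad(H)$-grading for a $\theta$-adapted triple) is built into Kawanaka's construction.
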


A proof of this result can be found in Section 5.5 in \cite{Car}. It should be noted that Carter's proof is given for $G$ defined over an algebraically closed field, and relies on a proof of the Jacobson-Morozov Lemma given in this context. That the Jacobson-Morozov Lemma holds over a field of characteristic 0 seems to be a well-known result (cf.~Section 2.4 in \cite{KL}), and a proof of a closely-related result can be found in Section 8 of \cite{BT} (more recently, a proof of this exact result can be found in Section 2 of \cite{Wit}). The rest of Carter's proof applies to this context without alteration.

However, in contrast to what we observe in the finite field setting, the representation $\widetilde{\eta}_{A,\alpha}$ is not necessarily guaranteed to be a genuine representation if $G$ is instead defined over a $p$-adic field; in general, we are only guaranteed that it is a projective representation of $U_A\rtimes Z_L(A)$. With that said, if $\eta_A$ is a character and $\widetilde{\eta}_{A,\alpha}$ is formed by tensoring with a character of $Z_L(A)$ - as is the case for the Bessel model on $\GSp(4)$ - then $\widetilde{\eta}_{A,\alpha}$ will be a genuine representation.



Unlike the Whittaker model, which served as the inspiration for the definition of a Gelfand-Graev representation (see \cite{GG}), the spherical model and Bessel model are not realized directly as gGGr's. Instead, we realize these models by extending $\eta_A$ from $U_A$ to $U_A\rtimes (Z_L(A)\cap G(\goth{o}))$, and then inducing to $G$. Note that this choice to induce from $Z_L(A) \cap G(\goth{o})$ means that the central character of a given representation will not play a role in whether or not that representation appears in the model. We also note that, in the case of the Whittaker model, $Z_L(A)$ is trivial, so it appears that this method of extending $\eta_A$ to the semidirect product of $U_A$ and $Z_L(A) \cap G(\goth{o})$ is a step towards understanding the general construction of gGGr's over local fields. As mentioned in Section \ref{sec:intro}, in Section \ref{sec:springer} we will expand on the conjectured connection between nilpotent orbits and unique models for characters of the Hecke algebra. 



\section{The Bessel Model and the Bessel Functional} \label{sec:bessel}

We return now to the setting where $G = \GSp(2n)$, and show how the Bessel model as formulated in \cite{NPS} fits into the narrative described in Section \ref{sec:intro} before we move on to establishing our main results. We carry all of our notation through from the previous section. We will have need to realize specific elements of $G$, and so we will explicitly define $G$ as $$G:= \{g \in M_{2n}(F) \mid g^{\top}\Omega g = k\Omega, k \in F^{\times}\},$$ where $$\Omega = \begin{pmatrix} & -\Omega' \\ \Omega' & \end{pmatrix}$$ and $\Omega'$ is the $n\times n$ matrix with 1's on the antidiagonal. As in Section \ref{sec:intro}, we let $\Phi$ denote the root system of $G$, with short simple roots $\alpha_1,\hdots, \alpha_{n-1}$ and long simple root $\alpha_n$. Let $s_1,\hdots, s_n$ and $w_0$ denote the corresponding simple reflections and long element, respectively, in $W$. Let $\rho$ denote the half-sum of the positive roots of $\Phi$, and let $\Phi^+$ and $\Phi^-$ denote the sets of positive and negative roots of $\Phi$, respectively.

\subsection{The Bessel Model as a Generalized Gelfand-Graev Representation} \label{sec:besselmodel}

The transformation property satisfied by the Bessel model depends on the parabolic subgroup $P_A$ of $G$ containing the subgroup corresponding to the negative short simple roots. We can factor $P_A = L_AU_A$ where $L_A$ is the Levi component of $P$, and $U_A$ is the unipotent component of $P$, as described in Section \ref{sec:gggr}. In this case, the nilpotent element $A$ can be chosen so that $A$ is the sum of non-zero elements in the subalgebra $\sum \alpha$, where the sum is taken over the long roots in $\Phi^+$. Let $\overline{U}_A$ denote the opposite unipotent of $U_A$. Let $\psi_0$ be a non-degenerate additive character on $F^+$, and let $\psi_A(u) = \psi_0(\Tr(ru'))$ for $u \in \overline{U}_A$, where $u'$ is the lower left $(n\times n)$-block of $u$ and $r$ is the upper right $(n\times n)$-block of $A$. The linear character $\psi_A$ is the representation of $\overline{U}_A$ that we denoted as $\eta_A$ in Section \ref{sec:gggr}.



We wish to extend $\psi_A$ to a character, $\widetilde{\psi}_A$, of $\overline{U}_A \rtimes Z_L(\psi_A)$, where $Z_L(\psi_A)$ is the stabilizer of the equivalence class of $\psi_A$. From Lemma \ref{stabilizer}, we know that $Z_L(\psi_A) = Z_L(A)$, where $Z_L(A)$ is the centralizer of $A$ in $L_A$. We choose $A$ so that 
$$r = \begin{pmatrix} & & & -\omega_{n-1} \\ & & \iddots & \\ & -\omega_1 & &  \\ 1 & & & \end{pmatrix}.$$ In order to have a unique model in the rank $n$ case, it is likely that we will need to have some sort of condition on $\omega_1,\hdots \omega_{n-1}$, much like we do in the case where $G = \SO(2n+1)$ (cf.~the discussion of Bessel models in Section 1 of \cite{BFF}). Indeed, we see such a condition arise already in the rank 2 case - namely, that $\omega_1 \in F^{\ast}\bs (F^{\ast})^2$ (cf.~the proof of Theorem \ref{thm:functional}).

From our choice of $A$, we see that $Z_L = Z_L(A)$ is the subgroup of $L_A$ with $\GSO(n)$ blocks on the diagonal according to the symmetric bilinear form $$\begin{pmatrix} -\omega_{n-1} & & & \\ & \ddots & & \\ & & -\omega_1 & \\ & & & 1 \end{pmatrix}.$$ We pause here to note that the simple reflections $s_1,\hdots, s_{n-1}$, corresponding to the short simple roots, are contained in $Z_L$. We will denote the subgroup of $W$ generated by these simple reflections as $W_L$, and we define $$\Phi_L := \left\{\alpha \in \Phi \mid \alpha = \sum_{i=1}^{n-1} c_i\alpha_i\right\}.$$ Note that $\alpha \in \Phi_L$ if and only if the root subgroup corresponding to $\alpha$, denoted $x_{\alpha}(F)$, is a subgroup of $L$. Finally, we define $\Phi_L^+ := \Phi_L \cap \Phi^+$, and we define $\Phi_L^-$ analogously.


Turning our attention back to our realization of the Bessel model as a gGGr, we define $\widetilde{\psi}_A(ut) = \psi_A(u)$ for $u \in \overline{U}_A$ and $t \in Z_L$. Note that this representation is the one that would be denoted by $\widetilde{\eta}_{A,1}$ in the previous section, constructed from $\psi_A$ and the trivial representation of $Z_L$. Then, following the previous section, we define the Bessel model to be $\Ind_{\overline{U}_AZ_L}^G(\widetilde{\psi}_A)$.

The Bessel functional for an irreducible admissible representation $\theta$ on $G$ is defined to be a linear functional $\calB$ on the representation space $V_{\theta}$ of $\theta$ such that $$\calB(\theta(ut) v) = \widetilde{\psi}_A(ut)\calB(v),$$ for $v \in V_{\theta}$, $t \in Z_L$ and $u \in \overline{U}_A$. In particular, note that this means that $\widetilde{\psi}_A$ must agree with the central character of $\theta$. Following \cite{BFF}, let $Z_L(\goth{o}) = Z_L \cap \SL(2n,\goth{o})$, so that $Z_L$ is the semidirect product of the compact group $Z_L(\goth{o})$ and the center of $G$. We want the character $\widetilde{\psi}_A$ to have $\goth{o}$ as its conductor, so we choose $r \in \textup{Mat}(n,\goth{o})$. We will discuss questions of existence and uniqueness of a Bessel functional for $\ind_B^G(\chi_{\textup{univ}}^{-1})$ in depth in the next section. We end this section with the statement of Novodvorsky and Piatetski-Shapiro's theorem regarding the (more general) uniqueness of the Bessel functional for $\GSp(4)$:



\begin{Theorem}[\cite{NPS}, Theorem 1]\label{thm:uniqueness}
Let $\theta$ be an irreducible admissible representation of the group $G = \GSp(4)$ in a complex space $V$. Then the dimension of the space of all linear functionals $\calB$ on $V$ for which $$\calB(\theta(ut) v) = \widetilde{\psi}_A(ut)\calB(v), \textup{ for all $t \in Z_L$, $u \in \overline{U}_A$, $v \in V$}$$ does not exceed one. 
\end{Theorem}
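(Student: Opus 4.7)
The plan is to follow the Gelfand--Kazhdan involution strategy that has become standard for multiplicity-one theorems of this shape. By Frobenius reciprocity, a Bessel functional on $\theta$ corresponds to an element of $\calHom_G(\theta, \Ind_R^G \widetilde{\psi}_A)$ with $R = \overline{U}_A Z_L$, and any nonzero such intertwiner produces, via the matrix coefficient construction (pairing with the contragredient $\tilde{\theta}$), a bi-$R$-equivariant distribution on $G$ transforming under the character $\widetilde{\psi}_A^{-1} \boxtimes \widetilde{\psi}_A$. It therefore suffices to show that the space of distributions on $G$ with this transformation property is at most one-dimensional.

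First, I would exhibit an anti-involution $\sigma$ of $G$ preserving $R$ and satisfying $\widetilde{\psi}_A \circ \sigma = \widetilde{\psi}_A$. A natural candidate is $\sigma(g) = w\, {}^{\top} g\, w^{-1}$ for a carefully chosen $w \in G$ that stabilizes the block $r$ defining the character and normalizes $Z_L$. Verifying that such $w$ exists uses the non-square condition $\omega_1 \in F^{\times} \bs (F^{\times})^2$, which forces $Z_L$ to be an anisotropic $\GSO(2)$ with a sufficiently rigid structure for the conjugation to match up on the unipotent side.

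Second, I would carry out an orbit-by-orbit analysis on the double coset space $R \bs G / R$ via a Bruhat-type decomposition. On each orbit, the stabilizer at a basepoint is computed explicitly; orbits whose stabilizer fails to lie in the kernel of $\widetilde{\psi}_A^{-1} \boxtimes \widetilde{\psi}_A$ support no equivariant distribution, and the remaining ``supporting'' orbits can be shown to be $\sigma$-stable, so that any equivariant distribution is forced to be $\sigma$-invariant. The standard Gelfand--Kazhdan argument then converts this $\sigma$-invariance into the inequality
$$\dim \calHom_G(\theta, \Ind_R^G \widetilde{\psi}_A) \cdot \dim \calHom_G(\tilde\theta, \Ind_R^G \widetilde{\psi}_A) \leq 1,$$
from which uniqueness (each factor $\leq 1$) follows.

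The main obstacle will be the orbit analysis in the second step: enumerating $R \bs G / R$ for $\GSp(4)$, checking that the transformation property kills all but a single ``open'' family of orbits, and showing the surviving orbits are indeed $\sigma$-stable. The non-split hypothesis on $r$ is the crucial input here --- without it, extra orbits arising from a split torus inside $Z_L$ would carry their own equivariant distributions and destroy uniqueness. This is why the non-split condition enters in a genuinely geometric way, and it is also the feature that one would need to reproduce (with the appropriate generalization of $\omega_1,\ldots,\omega_{n-1}$) in order to extend the uniqueness statement to $\GSp(2n)$ as conjectured in Theorem/Conjecture \ref{conjecture2}.
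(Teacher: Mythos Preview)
The paper does not give its own proof of this statement: Theorem~\ref{thm:uniqueness} is simply quoted from \cite{NPS} at the end of Section~\ref{sec:besselmodel}, with no argument supplied. So there is nothing in the paper to compare your proposal against directly. Your Gelfand--Kazhdan outline is, in broad strokes, the method that Novodvorsky and Piatetski-Shapiro themselves use, and it is a reasonable sketch of how such a proof goes.

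That said, it is worth noting that the paper \emph{does} prove a weaker, related uniqueness statement---namely, uniqueness of the Bessel functional on the principal series $\ind_B^G(\chi_{\textup{univ}}^{-1})$---in the course of proving Theorem~\ref{thm:functional}. The method there is genuinely different from yours: rather than analyzing $R\backslash G/R$ and invoking an anti-involution, the paper uses Bruhat's version of Mackey theory (Theorem~\ref{thm:mackey}) to identify the space of intertwiners with a space of equivariant distributions on $G$, and then runs a double-coset analysis over $B\backslash G/\overline{U}_A Z_L(\goth{o})$ (Lemma~\ref{lemma:bruhat}), killing all but one coset via the compatibility criterion \eqref{compatibility}. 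This is logically weaker than what you are sketching---it only handles principal series, not arbitrary irreducible admissible $\theta$---but it is also more direct, since no involution or contragredient pairing is needed. Your approach is the right one if the goal is the full strength of Theorem~\ref{thm:uniqueness}; the paper's Mackey argument is what you would use if you only cared about principal series, and indeed that is all the paper needs for its applications.

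One small caution: your emphasis on the non-split condition as the mechanism that kills extra orbits is somewhat overstated for the $\GSp(4)$ statement as written. Novodvorsky and Piatetski-Shapiro's uniqueness theorem covers both the split and non-split Bessel models; the non-split hypothesis in this paper enters rather in the \emph{existence} argument (see the use of $\omega_1\in F^{\times}\setminus (F^{\times})^2$ in the proof of Theorem~\ref{thm:functional}) and in pinning down which Hecke-algebra character the model realizes.
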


\subsection{Existence and Uniqueness of Bessel Functionals for Principal Series Representations}\label{sec:mackey}

In this section, we will describe how we arrive at an integral realization of the Bessel functional. Explicitly, this section is dedicated to explaining how we arrive at the following result: 

\begin{Theorem}\label{thm:functional}
Let $G = \GSp(2n)$. The functional, $\calB$, on $\ind_B^G(\chi_{\textup{univ}}^{-1})$ whose restriction to functions supported on the big cell $P_A\overline{U}_A$ is given by $$\calB(\phi) = \pi^{\rho_{\ve}^{\vee}} \int_{Z_L(\goth{o})}\int_{\overline{U}_A} \psi_A(u)\phi(uz)\,du\,dz$$ is a Bessel functional.
\end{Theorem}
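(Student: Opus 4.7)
The plan is to construct $\calB$ via Mackey theory, as set up in Section \ref{sec:intro}. Bessel functionals on $\ind_B^G(\chi_{\textup{univ}}^{-1})$ correspond bijectively to distributions $\Delta$ on $G$ satisfying $\Delta(sgb) = \widetilde{\psi}_A(s)\Delta(g)\chi_{\textup{univ}}^{-1}(b)$ for $s \in \overline{U}_A Z_L$ and $b \in B$, and the associated functional is recovered as $\calL(\phi)(g) = \int_{B\bs G} \Delta(h^{-1})\phi(hg)\,dh$, with $\calB(\phi) = \calL(\phi)(e)$. Thus it suffices to produce a suitable $\Delta$ and show that the resulting integral agrees with the claimed formula on functions supported in the big cell.

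To construct $\Delta$, I would use the Bruhat-type decomposition $G = \bigsqcup_w \overline{U}_A Z_L \cdot w \cdot B$, with $w$ ranging over $W_L$-cosets in $W$, and analyze on which cells an equivariant $\Delta$ is possible. On each cell, $\Delta$ is determined by its value at the representative, and the two equivariance constraints are consistent precisely when $\widetilde{\psi}_A$ is trivial on $\overline{U}_A Z_L \cap wBw^{-1}$. Using the explicit shape of $\psi_A$ together with a non-degeneracy condition on $r$ (in rank 2, $\omega_1 \notin (F^{\times})^2$, cf.~the remarks after Lemma \ref{stabilizer}), verify that this consistency is met on precisely the open orbit $P_A \overline{U}_A$; on that orbit $\Delta$ is uniquely determined up to a scalar, which will be absorbed into the normalization $\pi^{\rho_{\ve}^{\vee}}$, and can be extended by zero to all of $G$. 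Substituting $\Delta$ into the Mackey formula and parametrizing $B \bs G$ on the open cell by $\overline{U}_A \cdot Z_L(\goth{o})$ with the induced Haar measure $du\,dz$ yields the claimed integral. Equivariance under $\overline{U}_A Z_L$ then follows either from that of $\Delta$ or by direct substitution on the integral: $u \mapsto u u_0^{-1}$ produces the factor $\psi_A(u_0)$, using that $\psi_A$ is a character of $\overline{U}_A$; for $t_0 \in Z_L(\goth{o})$ the combination of $z \mapsto z t_0$ with the conjugation $u \mapsto t_0^{-1} u t_0$ gives invariance by Lemma \ref{stabilizer}, which says $Z_L$ stabilizes $\psi_A$; the central part of $Z_L$ is matched against the central character of $\chi_{\textup{univ}}^{-1}$.

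The main obstacle will be the Bruhat-cell analysis and the identification of the correct non-degeneracy conditions on $\omega_1,\ldots,\omega_{n-1}$. In rank 2 the condition $\omega_1 \notin (F^{\times})^2$ suffices, by the argument of \cite{NPS}; for general $n$ the corresponding statement is the existence half of Theorem/Conjecture \ref{conjecture2}, and can be isolated from the uniqueness half by a careful choice of $r$ together with a direct dimension count on each non-open cell. A secondary subtlety is to track the modular functions precisely enough to make the normalizing constant come out to exactly $\pi^{\rho_{\ve}^{\vee}}$ rather than another power of $\pi$; part of this can be deferred by treating $\pi^{\rho_{\ve}^{\vee}}$ as a convention, but the self-consistency of the construction still requires matching the $\delta_B^{1/2}$ from normalized induction against the Haar measure on $B \bs G$.
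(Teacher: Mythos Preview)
Your proposal is correct and follows essentially the same Mackey-theoretic approach as the paper. Two minor points of difference worth noting: the paper works with $Z_L(\goth{o})$ rather than the full $Z_L$, and its double-coset decomposition (Lemma~\ref{lemma:bruhat}) carries an extra $x_{-\alpha_1}(F)$ parameter that your indexing by $W_L$-cosets elides; and for general $n$ the paper does not repeat the cell analysis but simply verifies by direct substitution that the integral has the required $\overline{U}_A Z_L$-equivariance (exactly as in your second paragraph), which is all the theorem statement requires and is considerably easier than redoing the Bruhat analysis in rank $n$.
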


Essentially, we use Bruhat's extension of Mackey theory as described in \cite{Rod} to arrive at this integral realization of the functional in the rank 2 case, and then generalize. In particular, much of the argument used to prove the analogous statement for $\SO(2n+1)$ given in \cite{FG} can be applied to the rank 2 case without significant alteration, so, in the discussion to follow, we will refer the reader to the relevant results in \cite{FG} where appropriate. Before we begin, we note, per \cite{HKP}, that while the treatment in \cite{FG} ultimately yields a $\C$-valued functional on principal series representations, the method of proof applies equally well to a functional taking values in any commutative $\C$-algebra, and so the fact that $\chi_{\textup{univ}}^{-1}$ takes values in $R$ does not introduce any new complications when translating results from \cite{FG}.

Fix $G = \GSp(4)$ for the following discussion. As mentioned above, the argument that we will use to show that $\ind_B^G (\chi_{\textup{univ}}^{-1})$ admits a Bessel model, or in other words, that $$\dim \calHom_G\left(\ind_B^G(\chi_{\textup{univ}}^{-1}), \Ind_{\overline{U}_AZ_L(\goth{o})}^G \widetilde{\psi}_A\right) = 1$$
originated with Rodier in \cite{Rod}, and it makes use of the following theorem of Bruhat:

\begin{Theorem}[\cite{Bru}]\label{thm:mackey}
Let $G$ be a locally compact, totally disconnected unimodular group. Let $H_1$ and $H_2$ be two closed subgroups of $G$, and $\delta_i$ the module of $H_i$. Let $\tau_i$ be a smooth representation of $H_i$ in the vector space $E_i$, $\pi_i$ be the induced representation $\Ind_{H_i}^G \tau_i$ in the Schwarz space of $\tau_i$.

Then the space of all intertwining forms $I$ of $\pi_1$ and $\pi_2$ is isomorphic to the space of $(E_1 \otimes E_2)$-distributions $\Delta$ on $G$ such that \begin{equation} \lambda(h_1) \ast \Delta \ast \lambda(h_2^{-1}) = (\delta_1(h_1)\delta_2(h_2))^{1/2}\Delta \circ(\tau_1(h_1)\otimes \tau_2(h_2))\end{equation} where $h_i \in H_i$ and $\lambda(x)$ is the Dirac distribution in $x$. The correspondence between $I$ and $\Delta$ is given by \begin{equation}\label{eqn:intertwiningform}I(p_1(f_1), p_2(f_2)) = \int_G dg_2\int_G f_1(g_1g_2)\otimes f_2(g_2)\,d\Delta(g_1),\end{equation} where $f_i$ are locally constant functions on $E_i$ with compact support, and $p_i$ is the projection from this space of functions to the Schwarz space of $\tau_i$.
\end{Theorem}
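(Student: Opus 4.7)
The plan is to establish the two-way correspondence $I\leftrightarrow\Delta$ by realizing each induced representation as a quotient of smooth compactly supported test functions on $G$, pulling an intertwining form back to a distribution on $G\times G$, and then descending to $G$ using diagonal $G$-invariance.

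First I would introduce the standard surjections
$$p_i:C_c^\infty(G;E_i)\longrightarrow \Ind_{H_i}^G\tau_i,\qquad p_i(f)(g)=\int_{H_i}\delta_i(h)^{1/2}\tau_i(h)f(h^{-1}g)\,dh,$$
which are $G$-equivariant for right translation. Any $G$-invariant bilinear form $I:\pi_1\times\pi_2\to\C$ then pulls back through $p_1\otimes p_2$ to a bilinear pairing on $C_c^\infty(G;E_1)\times C_c^\infty(G;E_2)$, equivalently to an $(E_1\otimes E_2)^*$-valued distribution $D$ on $G\times G$. The $G$-equivariance of $p_i$ together with the intertwining property of $I$ forces $D$ to be invariant under the diagonal right-translation action of $G$ on $G\times G$, while the description of $\ker p_i$ as the span of functions of the form $f - \delta_i(h)^{1/2}(L_h \tau_i(h^{-1})f)$ imposes an $H_1\times H_2$-equivariance on $D$.

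The key reduction is to descend $D$ from $G\times G$ to $G$. The orbit map $\sigma:(g_1,g_2)\mapsto g_1g_2^{-1}$ identifies $(G\times G)/G_{\mathrm{diag}}$ with $G$, and pushforward under $\sigma$ turns a diagonally $G$-invariant distribution $D$ on $G\times G$ into a distribution $\Delta$ on $G$. Tracking the $H_1\times H_2$-transformation law for $D$ through the identification yields precisely
$$\lambda(h_1)*\Delta*\lambda(h_2^{-1})=(\delta_1(h_1)\delta_2(h_2))^{1/2}\,\Delta\circ(\tau_1(h_1)\otimes\tau_2(h_2)),$$
because the $\delta_i^{1/2}$ factors baked into $p_i$ survive the quotient unchanged. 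For the reverse direction, given such a $\Delta$, I would show that the right-hand side of \eqref{eqn:intertwiningform} depends only on $p_1(f_1)$ and $p_2(f_2)$, not on the lifts $f_1,f_2$; this reduces to the stated equivariance of $\Delta$. The $G$-invariance of the resulting $I$ then follows from the change of variables $g_2\mapsto g_2 g$ together with the hypothesis that $G$ is unimodular.

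The two main obstacles are the careful bookkeeping of modular characters — making sure the $(\delta_1\delta_2)^{1/2}$ factor emerges in the correct place and that $G$ unimodular is used exactly where needed so that no extra $\delta_G$ survives — and the analytic justification that pushforward along $\sigma$ of a diagonally invariant distribution is itself a well-defined distribution on $G$. In the totally disconnected setting the latter point is handled by choosing local smooth sections of $\sigma$ and averaging against Haar measure on the fibers; in a formal proof this is where one must verify that test functions on $G$ lift to diagonally invariant test functions on $G\times G$ without ambiguity. I would follow Bruhat's original argument in \cite{Bru}, which organizes both issues systematically, rather than reconstruct them from scratch.
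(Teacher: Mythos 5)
The paper does not actually supply a proof of this theorem: it is cited verbatim from Bruhat's thesis \cite{Bru} and used as a black box, so there is no in-paper argument to compare against. Your sketch is a faithful outline of Bruhat's method — presenting each $\Ind_{H_i}^G\tau_i$ as a quotient of $C_c^\infty(G;E_i)$, pulling $I$ back to a distribution on $G\times G$, using diagonal $G$-invariance and unimodularity to descend along $(g_1,g_2)\mapsto g_1g_2^{-1}$, and reading off the $H_1\times H_2$-equivariance — and the bookkeeping you flag (placement of the $\delta_i^{1/2}$ factors, well-definedness of the pushforward of a diagonally invariant distribution) is exactly where the technical work in Bruhat's argument lies. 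Since you explicitly defer those points to \cite{Bru} rather than rederive them, this is an accurate summary of the cited proof rather than a new one, which is appropriate given that the paper also treats the result as external input.
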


Let $\calD(X,R)$ denote the space of $R$-distributions on a locally compact, totally disconnected space $X$. Following \cite{FG}, we begin by noting that $$\calHom_G\left(\ind_B^G \chi_{\textup{univ}}^{-1}, \Ind_{\overline{U}_AZ_L(\goth{o})}^G \widetilde{\psi}_A\right) \simeq \calHom_G\left(\ind_{\overline{U}_AZ_L(\goth{o})}^G \widetilde{\psi}_A^{\ast}, \ind_B^G (\chi_{\textup{univ}}^{-1})^{\ast}\right),$$ where $\widetilde{\psi}_A^{\ast}$ and $(\chi_{\textup{univ}}^{-1})^{\ast}$ are the smooth contragredients of $\widetilde{\psi}_A$ and $\chi_{\textup{univ}}^{-1}$, respectively. Then, by Theorem \ref{thm:mackey}, this latter space is isomorphic to the subspace $\calD_{\widetilde{\psi}_A,\chi_{\textup{univ}}^{-1}}(G,R)$ of $\calD(G,R)$ of $R$-distributions $\Delta$ on $G$ satisfying \begin{equation}\label{mackey}\lambda(b) \ast \Delta \ast \lambda(h^{-1}) = \delta_B^{1/2}(b)\chi_{\textup{univ}}^{-1}(b)\widetilde{\psi}_A^{\ast}(h)\Delta.\end{equation} for all $h \in \overline{U}_AZ_L(\goth{o})$ and $b \in B$. With this condition in mind, we will use the double-coset decomposition of $G$ suggested in the following lemma to analyze $\calD_{\widetilde{\psi}_A,\chi_{\textup{univ}}^{-1}}(G,R)$:

\begin{Lemma}\label{lemma:bruhat}
Let $g \in G=\GSp(4)$. Then $g \in Bwx_{-\alpha_1}(F)\overline{U}_AZ_L(\goth{o})$, where $w$ can be chosen from $\{1,s_2,s_1s_2,s_2s_1s_2\}$.
\end{Lemma}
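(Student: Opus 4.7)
The plan is to deduce the lemma from the Bruhat decomposition $G = \bigsqcup_{w \in W} B w \overline{B}$, combined with the observation that $\{1, s_2, s_1 s_2, s_2 s_1 s_2\}$ is precisely the set $W^L$ of minimal-length coset representatives for $W/W_L$, where $W_L = \langle s_1 \rangle$ is the Weyl group of the Levi $L_A$. Writing $\overline{B} = T \overline{U}$ and absorbing the torus factor into the left $B$, the first step yields $g = b w \bar{u}$ for some $b \in B$, $w \in W$, and $\bar{u} \in \overline{U}$.

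Next I would use the set-level factorization $\overline{U} = x_{-\alpha_1}(F) \cdot \overline{U}_A$; this holds because the commutator of $x_{-\alpha_1}$ with any of the negative root subgroups generating $\overline{U}_A$ lies again in $\overline{U}_A$ (the only candidate new root being $-(k\alpha_1 + \beta)$ for $\beta$ a non-$L_A$ negative root, which either lies in $\overline{U}_A$ or fails to be a root), so $x_{-\alpha_1}(F)\overline{U}_A$ is a subgroup of $\overline{U}$ of the correct dimension. This gives $g = b \cdot w \cdot x_{-\alpha_1}(c) \cdot \bar{u}_A$, and if $w \in W^L$ we are already done (take $z = 1$).

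Otherwise $w = w' s_1$ for some $w' \in W^L$, and the key move is to conjugate the $s_1$ all the way to the right: since $s_1 x_{-\alpha_1}(c) s_1^{-1} \in x_{\alpha_1}(F)$ and $s_1 \in L_A$ normalizes $\overline{U}_A$, one rewrites $g = b w' x_{\alpha_1}(c') \bar{u}_A' s_1$ for some $c' \in F$ and $\bar{u}_A' \in \overline{U}_A$. The condition $w' \in W^L$ forces $w'(\alpha_1) > 0$, so $w' x_{\alpha_1}(F) w'^{-1} = x_{w'(\alpha_1)}(F) \subset U \subset B$, allowing the $x_{\alpha_1}$-term to be absorbed past $w'$ into the leading $B$-factor. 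Since $s_1$ admits a standard Weyl-lift with entries in $\{0, \pm 1\}$ and lies in $Z_L$ (as noted earlier in this section), we have $s_1 \in Z_L(\goth{o})$; inserting a trivial $x_{-\alpha_1}(0)$ then exhibits $g \in B w' x_{-\alpha_1}(F) \overline{U}_A Z_L(\goth{o})$ as required.

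The main bookkeeping amounts to verifying the subgroup identity $\overline{U} = x_{-\alpha_1}(F) \overline{U}_A$ and the integrality $s_1 \in Z_L(\goth{o})$; the latter hinges on choosing $\omega_1 \in \goth{o}^\times$ so that the defining symmetric form of $Z_L$ has $\goth{o}$-coefficients. Neither step presents a genuine obstacle --- once the coset representatives are recognized, the argument reduces to a careful tracking of conjugations past $s_1$ and $w'$.
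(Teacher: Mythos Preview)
Your argument is correct and follows essentially the same route as the paper's proof: Bruhat decomposition, the factorization $\overline{U} = x_{-\alpha_1}(F)\,\overline{U}_A$, recognition of $\{1,s_2,s_1s_2,s_2s_1s_2\}$ as coset representatives for $W/W_L$, and then conjugating $s_1$ across $x_{-\alpha_1}(c)\overline{u}_A$ to produce a positive-root factor that is absorbed into $B$. The only difference is packaging---you invoke the standard characterization of $W^L$ by $w'(\alpha_1)>0$, while the paper just lists the four elements and asserts $w'x_{\alpha_1}(t)=b'w'$ directly.

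One small caution: your phrase ``a standard Weyl-lift with entries in $\{0,\pm 1\}$ \dots\ lies in $Z_L$'' is not quite right as stated. The usual $\{0,\pm 1\}$-lift of $s_1$ generally does \emph{not} lie in $Z_L$; what the paper actually uses is that $s_1$ admits a representative $\omega\in Z_L$ whose entries involve $\omega_1$ (explicitly displayed in the paper's proof), and it is this $\omega$ that lands in $Z_L(\goth{o})$ once $\omega_1\in\goth{o}^\times$. Since you already flag the dependence on $\omega_1\in\goth{o}^\times$ at the end, this is only a wording issue, not a gap in the logic.
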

\begin{proof}
Using the Bruhat decomposition, we can write $g =bwu$, where $b \in B$, $w \in W$, and $u \in \overline{U}$. Note that $s_1 \in W$ can be written as the product of a diagonal matrix, $d$, and the matrix $$\omega = \begin{pmatrix} & \omega_1 & & \\ 1 & & & \\ & & & -\omega_1 \\ & & -1 & \end{pmatrix}  \in Z_L.$$ Hence, if $w = w's_1$, where $\ell(w')< \ell(w)$ (here $\ell(w)$ denotes the length of $w$), then $g$ can be written as $bw'd\omega u = (bd')w'\omega u$, where $bd' = bw'd(w')^{-1} \in B$ and $w' \in \{1,s_2,s_1s_2, s_2s_1s_2\}$. Factoring $u = x_{-\alpha_1}(t)u_A$ for some $t \in F$ and $u_A \in \overline{U}_A$, we can see that $\omega x_{-\alpha_1}(t)u_A =  x_{\alpha_1}(t')u_A\omega$, for some $t' \in F$. Then $w'x_{\alpha_1}(t) = b'w'$ for some $b' \in B$, so that we have $g = (bdb')w'u_A\omega \in Bw'\overline{U}_AZ_L(\goth{o})$, as desired.

\end{proof}

In a series of results in \cite{FG}, starting with Proposition 2.4, Friedberg and Goldberg show that, for a given non-zero $\Delta \in \calD_{\widetilde{\psi}_A,\chi_{\text{univ}}^{-1}}(G,R)$, $\Delta$ can only be supported on one specific double coset, and that, in addition, $\Delta$ is completely determined by its restriction to that double coset. The same thing is true in our case, and we will show that the only double coset in the refined double coset \eqref{eqn:bruhat} that can serve as the support of $\Delta$ is $B\overline{U}_AZ_L(\goth{o})$. 

\begin{proof}[Proof of Theorem \ref{thm:functional}]
Following the proof of Proposition 2.4 in \cite{FG}, we will start by showing that many double cosets in Lemma \ref{lemma:bruhat} fail to satisfy the following compatibility criterion (Theorem 1.9.5~in cite \cite{Sil}): For a given double coset $Bwu_{-\alpha_1}\overline{U}_AZ_L(\goth{o})$ (where $u_{-\alpha_1} \in x_{-\alpha_1}(F)$), if there exists $b \in B$ such that $w^{-1}bw \in \overline{U}_AZ_L(\goth{o})$ and \begin{equation}\label{compatibility} \chi_{\textup{univ}}^{-1}(b) \neq \widetilde{\psi}_A(w^{-1}bw),\end{equation} then the double coset in question is not part of the support of any distribution in $\calD_{\widetilde{\psi}_A,\chi_{\textup{univ}}^{-1}}(G,R)$. 

To begin, let $u_A \in \overline{U}_A$ such that $u_{-\alpha_1}u_Au_{-\alpha_1}^{-1} \in x_{-\alpha_2}(F)$. Then, since $w(-\alpha_2) \in \Phi^+$ for $w\in \{s_2, s_1s_2, s_2s_1s_2\}$, we know that $wu_{-\alpha_1}u_Au_{-\alpha_1}^{-1}w^{-1}$ is contained in some positive root subgroup in $U$. Back in Section \ref{sec:besselmodel}, we chose $A$ such that $\omega_1 \in F^{\ast}\bs (F^{\ast})^2$; under this assumption, we can pick $u_A$ such that $\widetilde{\psi}_A(u_A)\neq 1$, as verified by some routine root subgroup calculations. Then, since $\chi_{\text{univ}}^{-1}(u) = 1$ for all $u \in U$, we see that \eqref{compatibility} does not hold on $Bwu_{-\alpha_1}\overline{U}_AZ_L(\goth{o})$ for any $w \in \{s_2, s_1s_2, s_2s_1s_2\}$ or $u_{-\alpha_1} \in x_{-\alpha_1}(F)$.


At this point, the remainder of the proof that $$\dim \calHom_G\left(\ind_B^G(\chi_{\textup{univ}}^{-1}), \Ind_{\overline{U}_AZ_L(\goth{o})}^G \widetilde{\psi}_A\right) \leq 1$$ is analogous to the end of the proof of Theorem 2.1 in \cite{FG}. We will leave the proof of the existence of a non-zero Bessel functional for Section \ref{sec:intertwiner}.

The Bessel functional is realized as an integral using Theorem \ref{thm:mackey}. In particular, Theorem \ref{thm:mackey} tells us that, if $\Delta$ is a non-zero element of $\calD_{\widetilde{\psi}_A,\chi_{\textup{univ}}^{-1}}(G,R)$, then the corresponding intertwining form, $I$, of $\ind_B^G (\chi_{\textup{univ}}^{-1})$ and $\Ind_{U_AZ_L(\goth{o})}^G \widetilde{\psi}_A$, is given by \eqref{eqn:intertwiningform}. Hence, the corresponding Bessel functional is realized as the inner integral of $I$, which in this case is 
\begin{align*}
\calB(\phi)(g) &= \int_G \phi(hg)\,d\Delta(h)
\\&= \int_{Z_L(\goth{o})}\int_{\overline{U}_A} \psi_A(u)\phi(uzg)\,du\,dz,
\end{align*}
with $g$ set equal to $1$. It is readily verified that generalizing this integral to $\GSp(2n)$ yields a Bessel functional for $\Ind_B^G(\chi_{\textup{univ}}^{-1})$, as defined in Section \ref{sec:besselmodel}. As mentioned in the previous paragraph, we will show that this integral is non-zero in the proof of Lemma \ref{besselvalue}. 
\end{proof}

\begin{remark}
Note that, in the statement of Theorem \ref{thm:functional} we have normalized the Bessel functional so that the diagram \eqref{diagram} will commute with $v_{\ve} = \pi^{\rho_{\ve}^{\vee}}$ as in Theorem \ref{thm:main}. 
\end{remark}


Letting $G = \GSp(2n)$ once more, we conclude this section with the following proposition regarding the convergence of $\calB$: 

\begin{Proposition}
If $\phi \in \ind_B^G (\chi_{\textup{univ}}^{-1})$ then $\calB(\phi)$ converges in $R$.
\end{Proposition}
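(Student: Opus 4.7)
The plan is to show that the double integral defining $\calB(\phi)$ is a well-defined element of $R$. Since $Z_L(\goth{o})$ is compact, the outer integration is automatic once the inner integral
\[
I(z) := \int_{\overline{U}_A}\psi_A(u)\phi(uz)\,du
\]
is shown to lie in $R$ and to depend in a locally constant fashion on $z \in Z_L(\goth{o})$. To analyze $I(z)$ for fixed $z$, I would use the Iwasawa decomposition $G=BK$ to write $uz = b(u,z)k(u,z)$ and let $\mu(u,z)\in X_\ast(T)$ denote the class modulo $T(\goth{o})$ of the torus-component of $b(u,z)$. The transformation law for $\phi$ gives
\[
\phi(uz) = \delta_B^{1/2}(\pi^{\mu(u,z)})\,\pi^{-\mu(u,z)}\,\phi(k(u,z)),
\]
and, since $\delta_B^{1/2}$ and $\chi_{\textup{univ}}^{-1}$ are trivial on $B(\goth{o})$ while $\phi$ is right-invariant under some open compact $K_1 \subset K$, the restriction $\phi|_K$ factors through the finite set $B(\goth{o})\bs K/K_1$ and hence takes only finitely many values $v_1,\ldots,v_N \in R$.

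Next I would stratify $\overline{U}_A = \bigsqcup_{\lambda,i}\overline{U}_A^{(\lambda,i)}$ according to the pair $(\mu(u,z),\phi(k(u,z))) = (\lambda,v_i)$, so that
\[
I(z) = \sum_{\lambda,i} \delta_B^{1/2}(\pi^\lambda)\,\pi^{-\lambda}\,v_i \int_{\overline{U}_A^{(\lambda,i)}} \psi_A(u)\,du.
\]
A direct analysis of the Iwasawa fibres shows that for each $\lambda$ the set $\{u \in \overline{U}_A : \mu(u,z) = \lambda\}$ is compact (cf.\ the shell-by-shell decomposition in the classical $\GL_2$ Whittaker integral), so each stratum $\overline{U}_A^{(\lambda,i)}$ is compact, decomposes into finitely many $(K_1\cap\overline{U}_A)$-cosets, and the stratum integral is a finite absolutely convergent sum. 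Convergence of $I(z)$ in $R$ therefore reduces to showing that only finitely many $\lambda\in X_\ast(T)$ produce a nonzero stratum integral.

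The main obstacle is this last finiteness, which rests on the non-degeneracy of $\psi_A$ arising from the choice of the matrix $r$ in Section~\ref{sec:besselmodel}. The mechanism is standard: for $\lambda$ sufficiently far out in $X_\ast(T)$, conjugation of $\overline{U}_A$ by $\pi^\lambda$ dilates enough root subgroups that $\overline{U}_A^{(\lambda,i)}$ becomes a union of complete cosets of a subgroup of $\overline{U}_A$ on which $\psi_A$ restricts non-trivially, and orthogonality of characters then forces the stratum integral to vanish. In the rank-one $\GL_2$ Whittaker prototype this is the classical Gauss-sum cancellation; for the present $\overline{U}_A$ I would argue by induction on the height of the positive roots occurring, tracking how each root-subgroup coordinate of $u$ is scaled by $\pi^\lambda$. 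Once this vanishing is established, $I(z)$ is a finite $R$-linear combination of monomials and is locally constant in $z$ (via right $K_1$-invariance of $\phi$), so integrating over the compact group $Z_L(\goth{o})$ and multiplying by $\pi^{\rho_\ve^\vee}$ yields $\calB(\phi)\in R$.
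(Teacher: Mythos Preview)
Your proposal is correct and follows essentially the same strategy as the paper: use the Iwasawa decomposition to stratify the domain of integration by the cocharacter $\lambda$, invoke compactness of each fibre, and then appeal to the oscillation of $\psi_A$ to conclude that only finitely many $\lambda$ contribute. The paper packages the first two steps slightly differently---it first argues that $\calB(\phi)$ converges in a completion $R_{\calJ}$ of a subring of $R$ (where $\calJ=\{-\alpha^\vee\mid \alpha\notin\Phi_L^+\}$), citing Lemma~1.10.1 of \cite{HKP} (which says $\overline{U}\cap\pi^{\mu}UK$ is compact) for the compactness of the fibres, and only then invokes oscillation to descend from $R_{\calJ}$ to $R$---whereas you work directly with the stratification and skip the completion. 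The content is the same; your presentation is more explicit, while the paper's is terser and leans on the formal-power-series framework of \cite{HKP}.
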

\begin{proof}
Following Section 6.2 of \cite{HKP}, we begin by showing that $\calB(\phi)$ converges in a particular completion of $R$. Let $\calJ = \{-\alpha^{\vee} \mid \alpha \not\in \Phi_L^+\}$, and let $\C[\calJ]$ denote the subalgebra of $R$ generated by $\calJ$. Denote the completion of $\C[\calJ]$ with respect to the maximal ideal generated by $\calJ$ by $R_{\calJ}$. Our initial claim is that $\calB(\phi) \in R_{\calJ}$. Note that since $\phi \in \ind_B^G \chi_{\textup{univ}}^{-1}$ is compactly supported $\textup{mod}\, B$, there is no need to include any positive coroots in $\calJ$ to ensure convergence of the functional in $R_{\calJ}$. Additionally, since $\calB(\phi)$ is an integral over $\overline{U}_AZ_L(\goth{o})$, we can see that there is no need to include $\{-\alpha^{\vee} \mid \alpha \in \Phi_L^+\}$ in $\calJ$ either. Then, in order to see that $\calB(\phi)$ actually converges in $R_{\calJ}$, we apply the following lemma from \cite{HKP}:

\begin{Lemma}[\cite{HKP}, Lemma 1.10.1]
Let $\mu \in X_{\ast}(T)$. Then the set $\overline{U} \cap \pi^{\mu}UK$ is compact.
\end{Lemma}

Finally, we observe that, due to the oscillation of the character $\psi_A$, all but finitely many of the coefficients of the Laurent series $\calB(\phi)$ will vanish, which means that $\calB(\phi)$ is, in fact, an element of $R$, not just $R_{\calJ}$.
\end{proof}

\begin{remark}
Note that, if we were to specialize $\chi_{\textup{univ}}^{-1}$ to a $\C$-valued character on $B$, we could show that the resulting functional converges in $\C$ on elements of the corresponding principal series representation using an argument analogous to that presented in Section 3 of \cite{FG} (cf.~Proposition 3.5).
\end{remark}

\section{The Bessel Functional as a Hecke Algebra Intertwiner} \label{sec:intertwiner}

In this section we will prove Theorem \ref{thm:main}. This proof relies on exploiting the connection between the generators $T_s$ of $\calH_0$ and the principal series intertwining operators $A_s$. In Sections \ref{sec:psio} and \ref{sec:intercalcn} we introduce these intertwining operators and describe how they interact with $T_s$ and the Bessel functional before offering the proof of Theorem \ref{thm:main} in Section \ref{sec:proof}.

\subsection{Principal Series Intertwining Operators}\label{sec:psio}

As mentioned above, the principal series intertwining operators turn out to be closely connected to the left action of the elements of the finite Hecke algebra on $\ind_B^G(\chi_{\textup{univ}}^{-1})^J$, and we will exploit this connection in order to show that our functional acts as a Hecke algebra intertwiner in the way predicted in Theorem \ref{thm:main}. 

Our initial goal is to define a family of intertwining operators, one for each $w \in W$, that take $\M$ to itself. Our first guess at such an operator $$\calI_w : \phi \mapsto \int_{U \cap w\overline{U}w^{-1}} \phi(w^{-1}ug)\,du,$$ does not quite work, because it does not preserve $\M$. As shown in Section 1.10 of \cite{HKP}, one can extend $\M$ by scalars to a completion of $R$ according to the roots $$\Phi_{w}^+ := \{\alpha \in \Phi^+ \mid w^{-1}(\alpha) \in \Phi^-\},$$ such that this extension of $\M$ is preserved by $\calI_w$. Instead of doing this, we choose to use normalized versions of these intertwiners, $A_w$, where $$A_w := \left( \prod_{\alpha \in \Phi^+} (1-\pi^{\alpha^{\vee}}) \right)\calI_w,$$ since, using basic properties of $\calI_w$ recorded in Lemma 1.13.1 in \cite{HKP}, we can see that $A_w$ preserves $\M$. Now, since $A_w \in \End_{\calH}(\M)$, we can regard $A_w$ as an element of $\calH$ acting on the left of $\M$. In particular, for a simple reflection $s_{\alpha}$, one can show that the desired relation between $A_{s_{\alpha}}$ and $T_{s_{\alpha}}$ is \begin{equation} \label{heckeintertwiner} A_{s_{\alpha}} = (1-q^{-1})\pi^{\alpha^{\vee}} + q^{-1}(1-\pi^{\alpha^{\vee}})T_{s_{\alpha}}.\end{equation}

We pause here to note that it was Rogawski in \cite{Rog} who first used \eqref{heckeintertwiner} to recover earlier results of Rodier and others on the structure of the unramified principal series representations. However, Rogawski was using \eqref{heckeintertwiner} to recover information about the intertwining operators from his knowledge of the Hecke algebra action, which is the opposite of what we will do.

\subsection{Calculating Intertwining Factors}\label{sec:intercalcn}

In order to prove Theorem \ref{thm:main}, we will use \eqref{heckeintertwiner} to reduce the problem to understanding the interaction between the principal series intertwiners and the functional. In particular, if we make the assumption that the Bessel functional is unique, then, since $\calB \circ A_{s_{\alpha}}$ is a Bessel functional on $\ind_B^G(s_{\alpha}\cdot \chi_{\textup{univ}}^{-1})$, we know that it must be a constant multiple of $s_{\alpha} \circ \calB$. Hence, for each simple root $\alpha$, we want to calculate $c_{\alpha} \in R$ such that $$\calB \circ A_{s_{\alpha}} = c_{\alpha}(s_{\alpha} \circ \calB).$$ This turns out to be a tractable calculation, yielding the following results:



\begin{Proposition} \label{intertwiners}
Assume Theorem/Conjecture \ref{conjecture2}. With notation as above, we have that

\begin{equation} \label{intertwiner1}
\calB \circ A_{s_i} = (1-q^{-1}\pi^{\alpha_i^{\vee}})(s_{i}\circ \calB), \,\textup{if $i<n$,}
\end{equation}
and
\begin{equation} \label{intertwiner2}
\calB \circ A_{s_n} = (\pi^{\alpha_n^{\vee}}-q^{-1})(s_n\circ \calB).
\end{equation}
\end{Proposition}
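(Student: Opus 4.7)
The strategy begins by invoking the uniqueness hypothesis to reduce each identity to pinning down a single scalar. Assuming Theorem/Conjecture \ref{conjecture2}, the space of Bessel functionals on $\ind_B^G(s_\alpha \cdot \chi_{\textup{univ}}^{-1})$ is one-dimensional. Both $\calB \circ A_{s_\alpha}$ (because $A_{s_\alpha}$ intertwines the two principal series) and $s_\alpha \circ \calB$ are such functionals, so there must exist a constant $c_\alpha \in R$ with $\calB \circ A_{s_\alpha} = c_\alpha (s_\alpha \circ \calB)$. The entire proposition thus reduces to identifying $c_{\alpha_i}$ for $i<n$ and $c_{\alpha_n}$.

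To extract $c_\alpha$, I would evaluate both sides on a carefully chosen test vector $\phi$ whose support is concentrated enough in the big cell $P_A \overline{U}_A$ that the integral formula for $\calB$ from Theorem \ref{thm:functional} collapses. A natural candidate is (a suitable translate of) the Iwahori vector $\phi_1 = \eta(1_{T(\goth{o})UJ})$, for which $(s_\alpha \circ \calB)(\phi)$ is immediately a character twist of an explicit element of $R$. The left-hand side requires expanding $A_{s_\alpha} = \bigl(\prod_{\beta \in \Phi^+}(1-\pi^{\beta^\vee})\bigr)\calI_{s_\alpha}$ and computing the interlocked integral of $\calI_{s_\alpha}\phi$ over $x_\alpha(F)$ together with the integrations over $\overline{U}_A$ and $Z_L(\goth{o})$ appearing in $\calB$.

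The two cases then diverge according to root length. When $\alpha_i$ is a short simple root ($i<n$), the reflection $s_i$ lies in $Z_L$, so conjugation by $s_i$ preserves the domain $\overline{U}_A \times Z_L(\goth{o})$ of integration and merely shuffles the character $\psi_A$ within it. The $\calI_{s_i}$-integral over $x_{\alpha_i}(F)$ then reduces to a straightforward comparison of measures of integral versus non-integral scales of $x_{\alpha_i}$, which, once combined with the $\delta_B^{1/2}$ twist and the normalization factor, yields $(1 - q^{-1}\pi^{\alpha_i^\vee})$. For the long root $\alpha_n$, however, $s_n$ does not stabilize $L_A$, and $x_{-\alpha_n}(F) \subset \overline{U}_A$ is precisely where $\psi_A$ is nontrivial, so the inner integral acquires an oscillatory $\psi_A$-contribution that must be evaluated as a Tate-style local integral to produce $(\pi^{\alpha_n^\vee}-q^{-1})$.

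The principal obstacle lies in this long-root computation. In the short-root case the symmetry $s_i \in Z_L$ makes the comparison essentially formal, but for $\alpha_n$ one must carefully track how $s_n$-conjugation moves pieces of $\overline{U}_A$ out of the Levi and how the resulting oscillatory integral over $x_{\alpha_n}(F)$, the normalizing product $\prod_\beta(1-\pi^{\beta^\vee})$, and the modular character combine to yield precisely the bare $\pi^{\alpha_n^\vee}$ rather than a $q^{-1}$-corrected analog of the short-root answer. Verifying this cancellation — the true distinction between the Bessel intertwining constants and those of the Whittaker or spherical functionals — is where the genuine content of the proposition sits.
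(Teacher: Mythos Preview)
Your overall reduction via uniqueness to determining a scalar $c_\alpha$ is exactly how the paper proceeds, and your treatment of the long root $\alpha_n$ is essentially correct: the paper applies the rank-1 Bruhat decomposition $s_n x_{\alpha_n}(\tau) = h_{\alpha_n}(\tau^{-1})x_{\alpha_n}(\tau)x_{-\alpha_n}(\tau^{-1})$ inside the integral, makes a change of variables absorbing $x_{-\alpha_n}(\tau^{-1})$ into the $\overline{U}_A$-integration, and is left with the Whittaker-type shell integral $\int_{F^\times}\psi_A(-\tau^{-1})\chi_{\textup{univ}}^{-1}(h_{\alpha_n}(\tau^{-1}))\,d\tau = \pi^{\alpha_n^\vee}-q^{-1}$. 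One point worth noting: the paper carries this out for an \emph{arbitrary} Iwahori-fixed $\phi$, so \eqref{intertwiner2} is in fact established without invoking uniqueness at all.

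For the short roots your sketch diverges from the paper, and the direct $\calI_{s_i}$-integral you propose is not clearly tractable as stated: $x_{\alpha_i}(t)$ lies in $L_A$ but not in $Z_L$, so conjugating it past $u$ and absorbing it into the $Z_L(\goth{o})$-integration does not obviously collapse to a ``measure comparison.'' The paper instead avoids this integral entirely by choosing the test vector $\phi_1+\phi_{s_i}$, which (by \cite{HKP}, Lemma 1.13.1) is an eigenvector of $A_{s_i}$ with eigenvalue $1-q^{-1}\pi^{\alpha_i^\vee}$. This immediately gives $\calB(A_{s_i}(\phi_1+\phi_{s_i}))=(1-q^{-1}\pi^{\alpha_i^\vee})\,\calB(\phi_1+\phi_{s_i})$, so one only needs $s_i$-invariance of $\calB(\phi_1)$ and $\calB(\phi_{s_i})$; this is supplied by Lemma \ref{besselvalue}, which shows both are nonzero scalar multiples of $\pi^{\rho_\ve^\vee}$ (and $s_i(\rho_\ve^\vee)=\rho_\ve^\vee$ since $\rho_\ve$ involves only long roots). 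Your observation that $s_i\in Z_L$ is precisely what underlies Lemma \ref{besselvalue}, but the paper leverages it to compute $\calB(\phi_1)$ and $\calB(\phi_{s_i})$ directly rather than to simplify the intertwining integral itself.
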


\begin{remark}
Note that we need Theorem/Conjecture \ref{conjecture2} in order to assert that $\calB \circ A_{s_{\alpha}}$ is a scalar multiple of $s_{\alpha} \circ \calB$ in the rank $n>2$ case. After this point, the rest of the proof of Theorem \ref{thm:main} procedes with no caveats.
\end{remark}





In order to prove \eqref{intertwiner1}, we will need to calculate the image of the Iwahori-fixed vectors $\phi_1$ and $\phi_{s_i}$, for $i<n$, in the model: 

\begin{Lemma}\label{besselvalue}
The Bessel functional takes on the following values on the following Iwahori-fixed vectors:
\begin{equation} \label{phi1}
 \calB(\phi_1) = \pi^{\rho_{\ve}^{\vee}} m(\overline{U}_A Z_L(\goth{o})\cap BJ),
\end{equation}
and
\begin{equation} \label{phis}
\calB(\phi_{s_i}) = \pi^{\rho_{\ve}^{\vee}} m(\overline{U}_AZ_L(\goth{o}) \cap Bs_iJ), \text{ if $i < n$}.
\end{equation}
Moreover, these values are non-zero, as the sets $\overline{U}_AZ_L(\goth{o}) \cap BJ$ and $\overline{U}_AZ_L(\goth{o}) \cap Bs_iJ$ have non-zero measure.
\end{Lemma}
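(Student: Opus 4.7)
The plan is to evaluate the integral defining $\calB$ directly on each of $\phi_1$ and $\phi_{s_i}$, using the explicit description of the Iwahori-fixed vectors as pushforwards under $\eta$.

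First I would record the following standard consequence of the Iwahori-Bruhat decomposition: $\phi_w = \eta(1_{T(\goth{o})UwJ})$ is supported on $BwJ$, and the sum defining $\eta$ reduces to a single summand on that support, yielding $\phi_w(bwj) = \delta_B(\pi^\nu)^{1/2}\pi^{-\nu}$ whenever $b$ has torus component $\pi^\nu t_0$ with $t_0 \in T(\goth{o})$, $\nu \in X_{\ast}(T)$. In particular $\phi_w$ takes the constant value $1$ on the subset of $BwJ$ on which the torus component of the $B$-factor lies in $T(\goth{o})$.

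Substituting into the integral formula from Theorem \ref{thm:functional}, the integrand $\psi_A(u)\phi_w(uz)$ is supported on $\overline{U}_A Z_L(\goth{o}) \cap BwJ$, so the formulas \eqref{phi1} and \eqref{phis} reduce to the assertion that, for $w \in \{1, s_i : i < n\}$, the integrand is identically $1$ on this locus. Two things need to be checked: (i) that the torus component of the $B$-factor in the decomposition $uz = bwj$ lies in $T(\goth{o})$, so that $\phi_w(uz) = 1$; and (ii) that $\psi_A(u) = 1$.

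The key input for both is the standard Iwahori factorization $J = (U \cap J)T(\goth{o})(\overline{U} \cap J)$ with $\overline{U} \cap J = \overline{U}(\pi\goth{o})$, together with $B \cap \overline{U} = \{1\}$. For $w = 1$ and $z = 1$, a direct calculation using these facts yields $\overline{U}_A \cap BJ = \overline{U}_A(\pi\goth{o})$: writing $u = bj$ and Iwahori-factoring $j = j_+ j_T j_-$ gives $u j_-^{-1} = b j_+ j_T \in B \cap \overline{U} = \{1\}$, so $u = j_- \in \overline{U}_A(\pi\goth{o})$ and $b = 1 \in T(\goth{o})U$. Under the conductor assumption on $\psi_A$, we then have $\psi_A(u) = 1$ on $\overline{U}_A(\pi\goth{o})$. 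The extension to general $z \in Z_L(\goth{o})$, and then to $w = s_i$ with $i < n$ (using that the short simple reflection $s_i$ admits a matrix representative in $Z_L(\goth{o}) \subset K$, which permutes the relevant root subgroups in an integral-structure-preserving way), proceeds by analogous bookkeeping that tracks the $z$- and $s_i$-conjugations of root subgroups back to the $w = 1$, $z = 1$ case. I expect this bookkeeping to be the main obstacle; however, it follows the template already laid out for the analogous $\SO(2n+1)$ computation in \cite{FG}, which I plan to adapt.

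Finally, the sets $\overline{U}_A Z_L(\goth{o}) \cap BwJ$ are open in $\overline{U}_A Z_L(\goth{o})$ (since $BwJ$ is open in $G$), so non-vanishing of the measure follows from exhibiting a single point: the identity for $w = 1$, and the representative of $s_i$ for $w = s_i$ with $i < n$. Since $\pi^{\rho_\ve^\vee}$ is a unit in $R$, the non-vanishing of $\calB(\phi_1)$ and $\calB(\phi_{s_i})$ follows.
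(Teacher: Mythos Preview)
Your $z=1$ computation is correct and is essentially the paper's argument specialized to the standard Iwahori. The gap is the passage to general $z\in Z_L(\goth{o})$: since $Z_L(\goth{o})\not\subset J$, the phrase ``tracks the $z$-conjugations of root subgroups back to the $z=1$ case'' hides a real difficulty. Conjugating $z$ past $u$ lands you in $\overline{U}_A$, but then you must analyze $u'\in z^{-1}BJ$, and $z^{-1}Bz$ need not be $B$ (indeed $Z_L$ contains the short simple reflections). Without further structure you cannot simply invoke the $z=1$ Iwahori factorization.

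The paper resolves this by passing to the parahoric $J_A$ (the preimage of $P_A(k)$ in $K$), which contains both $J$ and $Z_L(\goth{o})$, and also the short simple reflections $s_i$ for $i<n$. The key ingredient is Lemma~\ref{blockiwahori}: the parahoric factorization $J_A=(J_A\cap P_A)(J_A\cap\overline{U}_A)$ yields $\overline{U}_A\cap P_AJ_A=\overline{U}_A\cap J_A$. Once you have this, the argument is uniform: $uz\in BwJ$ with $w\in\{1,s_i\}$ forces $u\in BJ_A\cdot Z_L(\goth{o})^{-1}\subset P_AJ_A$, whence $u\in\overline{U}_A\cap J_A$ has integral entries and $\psi_A(u)=1$; integrality of $u$ and $z$ then forces the torus part of the $B$-factor into $T(\goth{o})$, giving $\phi_w(uz)=1$. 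Your observation that $s_i$ has a representative in $Z_L(\goth{o})$ is exactly what makes the $w=s_i$ case fall under the same umbrella, but you need the parahoric, not just conjugation bookkeeping, to make it go through cleanly. Your non-vanishing argument via openness is fine.
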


\begin{remark}
We already know that the integrals $\calB(\phi_1)$ and $\calB(\phi_{s_i})$ converge in $R$ from Section \ref{sec:mackey}; however, it will be important to the proof of Proposition \ref{intertwiners} for us to show that they are non-zero and invariant under the composition $s_i \circ \calB$.
\end{remark}

Before we can prove Lemma \ref{besselvalue}, we must first prove the following lemma:

\begin{Lemma}\label{blockiwahori}
$\overline{U}_A \cap P_AJ_A = \overline{U}_A \cap J_A$.
\end{Lemma}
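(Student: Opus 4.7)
The containment $\overline{U}_A \cap J_A \subseteq \overline{U}_A \cap P_A J_A$ is immediate, so the plan is to reduce the reverse containment to two standard facts: an Iwahori-type factorization of $J_A$ with respect to the parabolic $P_A = L_A U_A$, and the triviality of $\overline{U}_A \cap P_A$ coming from the open Bruhat cell.

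First I would invoke the Iwahori factorization of the parahoric $J_A$ with respect to the pair $(P_A,\overline{P}_A)$, namely
$$J_A = (\overline{U}_A \cap J_A)\cdot (L_A \cap J_A)\cdot (U_A \cap J_A).$$
Because $(L_A\cap J_A)(U_A\cap J_A)\subseteq L_AU_A = P_A$, this collapses to $J_A = (\overline{U}_A\cap J_A)(P_A\cap J_A)$. Pre-multiplying by $P_A$ absorbs the right-hand parahoric factor, so
$$P_A J_A \;=\; P_A\cdot (\overline{U}_A \cap J_A).$$

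Now suppose $\bar{u}\in \overline{U}_A \cap P_AJ_A$. By the previous display, there exist $p\in P_A$ and $\bar{u}_0 \in \overline{U}_A\cap J_A$ with $\bar{u} = p\bar{u}_0$, hence $\bar{u}\,\bar{u}_0^{-1} = p$. The left-hand side lies in $\overline{U}_A$ and the right-hand side in $P_A$, so it lies in $\overline{U}_A\cap P_A$. But the multiplication map $\overline{U}_A\times L_A\times U_A \to G$ is injective onto the open Bruhat cell, which forces $\overline{U}_A \cap P_A = \{1\}$. Thus $\bar{u} = \bar{u}_0 \in \overline{U}_A\cap J_A \subseteq J_A$, completing the reverse containment.

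The only non-formal ingredient is the Iwahori decomposition of $J_A$ relative to $P_A$; for the explicit matrix realization of $\GSp(2n)$ in use here this can be verified directly from the block structure of $J_A$, and in general it is a standard consequence of Bruhat--Tits theory for the parahoric attached to $P_A$. So I expect no essential obstacle — the lemma is really an observation that $J_A$ is ``compatible'' with the Levi decomposition of $P_A$ and that $\overline{U}_A$ meets $P_A$ only at the identity.
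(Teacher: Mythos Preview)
Your proof is correct and follows essentially the same route as the paper: both invoke the Iwahori-type factorization $J_A = (J_A\cap P_A)(J_A\cap \overline{U}_A)$ and then use $\overline{U}_A\cap P_A = \{1\}$ to conclude. One small wrinkle: with the factors in the order you wrote, $J_A = (\overline{U}_A\cap J_A)(P_A\cap J_A)$, left-multiplying by $P_A$ does \emph{not} absorb the $P_A\cap J_A$ factor (it sits on the wrong side); you need the reverse order $J_A = (P_A\cap J_A)(\overline{U}_A\cap J_A)$, which holds equally well since $J_A$ is a group, and then the rest of your argument goes through verbatim and matches the paper's.
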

\begin{proof}
Let $u \in \overline{U}_A \cap P_AJ_A$. We see that the standard argument for the rank 1 Iwahori factorization $J = (J\cap B)(J\cap \overline{U})$ can be adapted here to give $J_A = (J_A \cap P_A)(J_A \cap \overline{U}_A)$. Using this, we see that we can factor $u = pj$, with $p \in P_A$ and $j \in J_A \cap \overline{U}_A$. Rewriting this as $uj^{-1} = p$, we see that $uj^{-1} \in \overline{U}_A \cap P_A$, so $u = j \in J_A\cap \overline{U}_A$.
\end{proof}

\begin{proof}[Proof of Lemma \ref{besselvalue}]
Consider the Iwahori-Bruhat-like decomposition $$G = P_AJ_A \sqcup P_As_nJ_A,$$ where $J_A$ is the preimage of $P_A(k)$ under the canonical homomorphism $G(\goth{o}) \to G(k)$ (note that $P_A$ is the parabolic subgroup generated by $B$ and the root subgroups $x_{-\alpha_i}(F)$ for $i < n$). In order to see that \begin{equation} \label{eqn3} \calB(\phi_1) = \int_{Z_L(\goth{o})} \int_{\overline{U}_A} \psi_A(u) \phi_1(uz) \,du\,dz = m(\overline{U}_AZ_L(\goth{o}) \cap BJ),\end{equation} we must first show that \begin{equation}\label{eqn1} \overline{U}_AZ_L(\goth{o}) \cap BJ \subset (\overline{U}_A \cap J_A)J_A. \end{equation} 

Now, if $u \in \overline{U}_A$ and $z \in Z_L(\goth{o})$, then $uz \in BJ$ only if $u$ has an Iwahori-Bruhat decomposition $u = bwj$ with $b \in B$, $j \in J$, and $w \in W_L$, since \begin{equation} Z_L(\goth{o}) \subset J_A = \bigsqcup_{w\in W_L} JwJ .\end{equation} Additionally, we see that $\overline{U}_A \cap BwJ \subset \overline{U}_A\cap P_AJ_A$ whenever $w \in W_L$, so that we have \begin{equation} \label{eqn2} \overline{U}_AZ_L(\goth{o}) \cap BJ \subset (\overline{U}_A \cap P_AJ_A)J_A. \end{equation} Equation \eqref{eqn1} now follows from \eqref{eqn2} by Lemma \ref{blockiwahori}.

Since the conductor of $\psi_A$ is $\goth{o}$, \eqref{eqn1} tells us that \begin{equation} \int_{Z_L(\goth{o})} \int_{\overline{U}_A} \psi_A(u) \phi_1(uz) \,du\,dz = \int_{Z_L(\goth{o})} \int_{\overline{U}_A} \phi_1(uz) \,du\,dz,\end{equation} and so we see that \eqref{eqn3} holds. Finally, we note that $$(\overline{U}_A \cap J)(Z_L(\goth{o}) \cap J) \subset \overline{U}_AZ_L(\goth{o}) \cap BJ,$$ which means that $\calB(\phi_1) \neq 0$.

Making suitable adjustments to the argument given above gives us \eqref{phis}.
\end{proof}

\begin{remark}
The proof of Theorem \ref{thm:functional} is now complete as well.
\end{remark}

We make the choice now to normalize our Haar measure so that $m(\overline{U}_AZ_L(\goth{o}) \cap BJ) = 1$. We are ready to prove Proposition \eqref{intertwiners}:

\begin{proof}[Proof of Proposition \ref{intertwiners}]
In order to make our calculation of $c_{\alpha_i}$ easier, for $i<n$, we will evaluate $\calB \circ A_{s_i}$ on the Iwahori-fixed vector $\phi_1+\phi_{s_i}$. From Lemma 1.13.1 in \cite{HKP}, we know that $$\calB(A_{s_i}(\phi_1 + \phi_{s_i})) = (1-q^{-1}\pi^{\alpha_i^{\vee}})\calB(\phi_1 + \phi_{s_i}).$$ Note that, if we can show that $\calB(\phi_1)$ and $\calB(\phi_{s_i})$ are both invariant under the reflection $s_i$, then we will have proved \eqref{intertwiner1}. By Lemma \ref{besselvalue}, we know that $\calB(\phi_1) = \pi^{\rho_{\ve}^{\vee}},$ and hence $\calB(\phi_1)$ is invariant under the reflection $s_i$. Similarly, from \eqref{phis}, we know that $\calB(\phi_{s_i})$ is a non-zero multiple of $\pi^{\rho_{\ve}^{\vee}}$, and so we see that $\calB(\phi_{s_i})$ is also invariant under the reflection $s_i$.



Next, we calculate $c_{\alpha_n}$. Finding this intertwining constant is similar to the corresponding calculation for the Whittaker functional on $\GL(2)$. Let $\phi$ be an element of $\ind_B^G(\chi_{\textup{univ}}^{-1})^J$ on which $\calB$ is non-zero. A priori, we do not know that such an element exists - however, in our proof of \eqref{intertwiner1} we showed that $\phi_{1}$ is such a function. We see that $$\calB(A_{s_{n}}\phi)(1) = \pi^{\rho_{\ve}^{\vee}} \int_{Z_L(\goth{o})} \int_{\overline{U}_A} \int_{F}\psi_A(u) \phi(s_nx_{\alpha_n}(\tau)uz)\,d\tau\,du\,dz;$$ note that we only need to evaluate the functional at 1 in order to determine the intertwining constant. Using the rank 1 Bruhat decomposition $$s_nx_{\alpha_n}(\tau) = h_{\alpha_n}(\tau^{-1})x_{\alpha_n}(\tau) x_{-\alpha_n}(\tau^{-1}),$$ where $h_{\alpha_n}$ denotes the semisimple subgroup of the embedded $\SL(2)$ triple corresponding to $\alpha_n$, and excluding the point $\tau = 0$, we can rewrite this integral as $$\int_{Z_L(\goth{o})}\int_{\overline{U}_A} \int_{F^{\times}}\psi_A(u)\chi_{\textup{univ}}^{-1}(h_{\alpha_n}(\tau^{-1}))\phi(x_{-\alpha_n}(\tau^{-1})uz)\,d\tau\,du\,dz.$$ After factoring $u$ into root subgroups and performing a linear change of variables, we find that

\begin{align*}
\calB(A_{s_{\alpha_n}}\phi)(1) &= \pi^{\rho_{\ve}^{\vee}} \int_{Z_L(\goth{o})} \int_{\overline{U}_A} \psi_A(u)\phi(uz) \int_{F^{\times}}\psi_A(-\tau^{-1})\chi_{\textup{univ}}^{-1}(h_{\alpha_n}(\tau^{-1}))\,d\tau\,du\,dz
\\&= c_{\alpha_n}(s_{\alpha_n}\circ \calB(\phi))(1),
\end{align*}



where $$c_{\alpha_n} = \int_{F^{\times}}\psi_A(-\tau^{-1})\chi_{\textup{univ}}^{-1}(h_{\alpha_n}(\tau^{-1}))\,d\tau.$$ This last integral can be evaluated by shells so that, after normalizing the Haar measure so that $m(x_{\alpha_n}(\goth{o}))=1$, we get the familiar Whittaker intertwining constant $$c_{\alpha_n} = (\pi^{\alpha_n^{\vee}}-q^{-1}).$$



\end{proof}

\begin{remark}
Note that we were able to verify the long root intertwining constant, \eqref{intertwiner2}, on an arbitrary Iwahori-fixed vector without invoking the uniqueness of the model. Thus, in the proof of Theorem \ref{thm:main}, we only make use of Theorem/Conjecture \ref{conjecture2} when we prove \eqref{intertwiner1} (in the rank $n>2$ case).
\end{remark}

\subsection{Proof of Theorem \ref{thm:main}} \label{sec:proof}

In order to show that $\calB$ is an $\calH$-intertwiner as claimed in Theorem \ref{thm:main}, we will need to know the action of $T_{s_{\alpha}}$ on $V_{\ve} \simeq R$ explicitly for simple reflections $s_{\alpha}$. The calculation of this action follows easily from the Bernstein relation \eqref{bernstein}: for a basis element $\pi^{\mu}v_{\ve}$ - where, as before, $v_{\ve}$ denotes the eigenvector of $\calH_0$ corresponding to $\ve$ - we see that

\begin{align*}
T_{s_{\alpha}} \cdot \pi^{\mu}v_{\ve} &= \pi^{s_{\alpha}(\mu)} \ve(T_{s_{\alpha}})v_{\ve} + (1-q)\frac{\pi^{s_{\alpha}(\mu)}-\pi^{\mu}}{1-\pi^{-\alpha^{\vee}}}v_{\ve}
\\&= \left(\ve(T_{s_{\alpha}}) + \frac{1-q}{1-\pi^{-\alpha^{\vee}}}\right)\pi^{s_{\alpha}(\mu)}v_{\ve} + \frac{q-1}{1-\pi^{-\alpha^{\vee}}}\pi^{\mu}v_{\ve};
\end{align*}
in the second equality, we have rearranged terms so that we can see how $T_{s_{\alpha}}\cdot \pi^{\mu}v_{\ve}$ is expressed as a linear combination of $\pi^{\mu}v_{\ve}$ and $\pi^{s_{\alpha}(\mu)}v_{\ve}$ over $R$. Thus, regarding $T_{s_{\alpha}}$ as an operator on $R$, we see that $T_{s_{\alpha}}$ acts on $f \in R$ by \begin{equation}\label{heckeoperator} T_{s_{\alpha}}: f \mapsto \left(\ve(T_{s_{\alpha}}) + \frac{1-q}{1-\pi^{-\alpha^{\vee}}}\right)f^{s_{\alpha}} + \frac{q-1}{1-\pi^{-\alpha^{\vee}}}f.\end{equation}

\begin{proof}[Proof of Theorem \ref{thm:main}]
The main result we need to prove is that $\calB$ is indeed a left $\calH$-module intertwiner from $\ind_B^G(\chi_{\textup{univ}}^{-1})^J$ to $V_{\ve}$, where $\ve$ is the character that acts by multiplication by $-1$ on long simple roots and acts by $q$ on short simple roots. Once we have done this and checked that $\calF(1_{T(\goth{o})UJ}) = \calB(\phi_1) = \pi^{\rho_{\ve}^{\vee}}$, we can see that the diagram commutes since $\ind_B^G(\chi_{\textup{univ}}^{-1})^J \simeq M \simeq \calH$. That the diagram commutes on $1_{T(\goth{o})UJ}$ is immediate - we know that $\calB(\phi_1) = \pi^{\rho_{\ve}^{\vee}}$ from Lemma \ref{besselvalue}, and we observe that $\calF(1_{T(\goth{o})UJ}) = \calF(1_{T(\goth{o})UJ} \ast 1_J) = \pi^{\rho_{\ve}^{\vee}}$.


In order to prove that $\calB$ is a left $\calH$-module intertwiner, it suffices to show, on a set of generators $\{h\}$ for $\calH$, that $$\calB(h\cdot \phi) = h\cdot \calB(\phi), \textup{ for any $\phi \in \ind_B^G(\chi_{\textup{univ}}^{-1})^J.$}$$ In particular, we will choose our set of generators to be those elements of the form $\pi^{\mu}T_{s_{\alpha}}$ where $\mu \in X_{\ast}(T)$ and $s_{\alpha}$ is a simple reflection. Since $\pi^{\mu}$ acts by translation on both $V_{\ve}$ and $\ind_B^G(\chi_{\textup{univ}}^{-1})^J$, we can reduce to checking the equality on $T_{s_{\alpha}}$.

From \eqref{heckeintertwiner}, we immediately see that $$q^{-1}(1-\pi^{\alpha^{\vee}})\calB(T_{s_{\alpha}}\cdot \phi) = \calB(A_{s_{\alpha}}\phi)- (1-q^{-1})\pi^{\alpha^{\vee}}\calB(\phi).$$ Applying Proposition \ref{intertwiners}, we see that $$q^{-1}(1-\pi^{\alpha^{\vee}})\calB(T_{s_{\alpha}}\cdot \phi) = \left\{\begin{array}{ll} (1-q^{-1}\pi^{\alpha^{\vee}})(s_{\alpha}\circ \calB)(\phi) + (q^{-1}-1)\pi^{\alpha^{\vee}}\calB(\phi) & \textup{if $\alpha = \alpha_i$ $(i<n)$} \\ (\pi^{\alpha^{\vee}}-q^{-1})(s_{\alpha}\circ \calB)(\phi) + (q^{-1}-1)\pi^{\alpha^{\vee}}\calB(\phi) & \textup{if $\alpha = \alpha_n$.} \end{array}\right.$$ Dividing by $q^{-1}(1-\pi^{\alpha^{\vee}})$, we see that the operator acting on $B(\phi)$ is $$f \mapsto \frac{q}{1-\pi^{\alpha^{\vee}}}\left\{\begin{array}{ll} (1-q^{-1}\pi^{\alpha^{\vee}}) f^{s_{\alpha}} + (q^{-1}-1)\pi^{\alpha^{\vee}}f & \textup{if $\alpha = \alpha_i$ $(i<n)$} \\ (\pi^{\alpha^{\vee}}-q^{-1})f^{s_{\alpha}} + (q^{-1}-1)\pi^{\alpha^{\vee}}f & \textup{if $\alpha = \alpha_n$.} \end{array}\right.$$


If we compare this with the operator in \eqref{heckeoperator} that described the action of $T_{s_{\alpha}}$ on $R$, we see that it matches it exactly in both cases (recall that $\ve(T_{s_i}) = q$, if $i<n$, and $\ve(T_{s_n}) = -1$). Thus, $\calB(T_{s_{\alpha}}\cdot \phi) = T_{s_{\alpha}}\cdot \calB(\phi)$ for any $\phi \in \ind_B^G(\chi_{\textup{univ}}^{-1})$ and simple reflection $s_{\alpha}$.
\end{proof}

\section{Calculating Distinguished Vectors at Torus Elements} \label{sec:spherical}

In this section, we will focus on calculating the images of distinguished vectors in unique models of the universal principal series of $\GSp(2n)$. In \ref{sec:besspherical}, we will conclude our discussion of the Bessel functional with a proof of Theorem \ref{thm:spherical}. Then, in \ref{sec:womodels}, we will move on to discussing the connection between the Whittaker-Orthogonal models defined in \cite{BFG} and the proposed $\calH$-intertwiner corresponding to the fourth character, $\sigma$, of the finite Hecke algebra of $\GSp(4)$.

\subsection{Calculating Distinguished Vectors in the Bessel Model} \label{sec:besspherical}

We will use Theorem \ref{thm:main} to calculate the images of certain distinguished vectors under $\calB$ on anti-dominant, integral torus elements, culminating with a proof of Theorem \ref{thm:spherical}. Before we can prove Theorem \ref{thm:spherical}, we will calculate the images of the Iwahori-fixed vectors $\phi_w = \eta(1_{T(\goth{o})UwJ})$ in terms of the action of $\calH$ on $v_{\ve} = \pi^{\rho_{\ve}^{\vee}}$, as described in Theorem \ref{thm:iwahori}. Using the linearity of $\calB$ along with the alternator formula developed in \cite{BBF2}, we will arrive at a proof of Theorem \ref{thm:spherical}, which we will show matches the expression obtained in Corollary 1.8 in \cite{BFF}. 

In order to prove Theorem \ref{thm:iwahori}, we must first prove the following Iwahori factorization:

\begin{Proposition} \label{prop:iwahori}
$J = (J\cap B)(J\cap \overline{U}_AZ_L(\goth{o}))$.
\end{Proposition}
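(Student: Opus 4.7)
The plan is to reduce to a factorization internal to the Levi $L_A$ and then to a root-by-root argument modeled on the rank-two case (the commented-out version of this lemma, which decomposes each element of $x_{-\alpha_1}((\pi))$ explicitly as an upper-triangular matrix times an $\SO(2)$-rotation). First I would apply the standard Iwahori factorization $J = (J\cap B)(J\cap \overline{U})$. Since $L_A$ normalizes $\overline{U}_A$, the full opposite unipotent decomposes as $\overline{U} = \overline{U}_A \cdot \overline{U}_L$ where $\overline{U}_L := \overline{U}\cap L_A$ is the opposite unipotent of the Borel $B\cap L_A$ inside $L_A$, and choosing a compatible ordering of the negative roots yields a matching Iwahori decomposition $J\cap \overline{U} = (J\cap \overline{U}_A)(J\cap \overline{U}_L)$. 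Writing $j\in J$ as $b\cdot \overline{u}_A\cdot \overline{u}_L$ accordingly, if one can factor $\overline{u}_L = b_1 z$ with $b_1\in J\cap B\cap L_A$ and $z\in J\cap Z_L(\goth{o})$, then $j = (bb_1)(b_1^{-1}\overline{u}_A b_1)z$, where $bb_1\in J\cap B$ and, using that $L_A$ normalizes $\overline{U}_A$, $(b_1^{-1}\overline{u}_A b_1)z\in J\cap \overline{U}_A Z_L(\goth{o})$.

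It then remains to establish the Levi-level statement that $J\cap \overline{U}_L$ lies inside $(J\cap B\cap L_A)(J\cap Z_L(\goth{o}))$. Since $Z_L$ is built from $\GSO(n)$-blocks determined by a quadratic form that breaks along two-by-two diagonal blocks, the rank-two calculation localizes: for each short simple root $\alpha_i$ with $i<n$, the subgroup $x_{-\alpha_i}((\pi))$ sits inside an $\SL(2)$-block of $L_A$ that is embedded in a $\GSO(2)$-subgroup of $Z_L$, and the explicit factorization from the $\GSp(4)$ proof writes $x_{-\alpha_i}(t) = b_i h_i$ with $b_i$ upper triangular (hence in $J\cap B\cap L_A$) and $h_i$ a rotation in $Z_L(\goth{o})$. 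Non-simple positive roots in $\Phi_L^+$ are then absorbed by conjugating through the Weyl group $W_L\subset Z_L(\goth{o})$, since each such root is $W_L$-conjugate to a simple one.

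The hard part will be the combinatorial bookkeeping required to glue these local decompositions into a global factorization of $\overline{u}_L$: the simple negative root subgroups do not commute pairwise in $L_A$, so after rewriting each $x_{-\alpha_i}(t_i) = b_i h_i$ one must shuttle the $b_i$'s leftward and the $h_i$'s rightward past the remaining $x_{-\alpha_j}$ factors, producing cross-terms that must themselves be shown to land in $J\cap B$ or $J\cap Z_L(\goth{o})$. This is controlled by the Chevalley commutator relations together with the $W_L$-invariance of $Z_L(\goth{o})$, and is routine but notationally unpleasant.
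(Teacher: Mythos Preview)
Your proposal is correct and follows essentially the same route as the paper: both reduce via the standard Iwahori factorization to showing that $J\cap \overline{U}\cap L_A$ lies in $(J\cap B)(J\cap Z_L(\goth{o}))$, invoke the rank-two factorization of Lemma~\ref{iwahoria} root by root, and then handle the resulting cross-terms with Chevalley commutator relations. The paper makes your ``notationally unpleasant'' bookkeeping explicit by ordering the factors $u_{-\alpha}$ by the height $c(\alpha)=\sum c_i$, peeling off the highest one, and proving termination; one small simplification over your sketch is that the rank-two factorization applies \emph{directly} to every $\beta\in\Phi_L^+$ (each such $\beta$ picks out a $\GL(2)$-block of $L_A$ containing a $\GSO(2)$-subgroup of $Z_L$), so the detour through $W_L$-conjugation is unnecessary.
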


The proof of this proposition relies on the same result in the rank 2 case, which we prove now as a separate lemma:

\begin{Lemma} \label{iwahoria}
If $G=\GSp(4)$, then $J = (J\cap B)(J\cap \overline{U}_AZ_L(\goth{o}))$.
\end{Lemma}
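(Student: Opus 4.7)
The plan is to bootstrap from the usual Iwahori factorization $J = (J\cap B)(J\cap \overline{U})$ and repackage the part of $\overline{U}$ that lives inside the Levi $L_A$.

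First I would observe that in $\GSp(4)$ the parabolic $P_A$ has Levi whose only short simple root is $\alpha_1$, so $\overline{U} = \overline{U}_A \cdot x_{-\alpha_1}(F)$ (and the factorization is unique, since $x_{-\alpha_1}(F) \subset L_A$ normalizes $\overline{U}_A$). Fixing an ordering of the negative roots and using the standard Iwahori factorization, any $u \in J\cap\overline{U}$ can be written uniquely as $u = u_A \cdot x_{-\alpha_1}(t)$ with $u_A \in \overline{U}_A$ and $t \in \goth{p}$. Since $x_{-\alpha_1}(t)\in J$, automatically $u_A \in J\cap\overline{U}_A$. So it remains to prove
\begin{equation*}
x_{-\alpha_1}(\goth{p}) \subset (J\cap B)(J\cap Z_L(\goth{o})).
\end{equation*}

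Once that inclusion is in hand, the rest is an easy bookkeeping step: writing $x_{-\alpha_1}(t)=bh$ with $b \in J\cap B$ and $h\in J\cap Z_L(\goth{o})$, I get $u = u_A b h = b\,(b^{-1}u_A b)\,h$; since $b\in L_A$ normalizes $\overline{U}_A$, the conjugate $u_A' := b^{-1}u_A b$ still lies in $\overline{U}_A \cap J$, and then $u_A'\,h \in J\cap \overline{U}_A Z_L(\goth{o})$. Combined with the standard factorization this gives $J \subset (J\cap B)(J\cap \overline{U}_A Z_L(\goth{o}))$, and the reverse inclusion is trivial.

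The only real work is therefore the rank-one identity $x_{-\alpha_1}(t)=bh$. The idea is to view the relevant root subgroup inside the short $\SL(2)$ sitting in $L_A$, and decompose $x_{-\alpha_1}(t)$ using the bilinear form $\mathrm{diag}(-\omega_1,1)$ that defines the $\GSO(2)$ inside $L_A$. Concretely, I would take (writing only the $L_A$-block structure and letting $\ve = \omega_1$)
\begin{equation*}
b = \begin{pmatrix} (1-\ve t^2)^{-1} & -\ve t(1-\ve t^2)^{-1} \\ 0 & 1 \end{pmatrix},\qquad h = \begin{pmatrix} 1 & \ve t \\ t & 1 \end{pmatrix},
\end{equation*}
embedded into the $L_A$ block of $\GSp(4)$ by extending in the similitude-compatible way; a direct matrix check shows $x_{-\alpha_1}(t)=bh$. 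Because $t\in\goth{p}$, the quantity $1-\ve t^2$ is a unit in $\goth{o}^\times$, so $b\in J\cap B$; and $h$ preserves the bilinear form $\mathrm{diag}(-\omega_1,1)$, so $h\in Z_L(\goth{o})\cap J$.

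The main obstacle is purely verifying this last identity and checking integrality: one has to be careful that the factor $(1-\ve t^2)^{-1}$ really is a unit (which is why the hypothesis $t\in\goth{p}$ is essential) and that the chosen $h$ lands in $Z_L$ with respect to the specific bilinear form fixed in Section~\ref{sec:besselmodel}. Everything else is bookkeeping, and once the rank-one decomposition is established the rest of the argument is the straightforward commutation described above.
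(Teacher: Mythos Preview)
Your proof is correct and follows essentially the same route as the paper: both reduce via the standard Iwahori factorization to the single claim that $x_{-\alpha_1}(\goth{p}) \subset (J\cap B)(J\cap \overline{U}_A Z_L(\goth{o}))$, and both establish that claim by the identical explicit $2\times 2$ decomposition $x_{-\alpha_1}(t)=bh$ with the matrices you wrote down. Your write-up is slightly more detailed in that you spell out the commutation step $u_A b h = b(b^{-1}u_A b)h$ and note that $b\in L_A\cap J$ normalizes $\overline{U}_A\cap J$, whereas the paper leaves this implicit; otherwise the arguments coincide.
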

\begin{proof}
Using the usual Iwahori factorization, we can see that it suffices to show that the subgroup $x_{-\alpha_1}((\pi))$ of $J$ is contained in $(J \cap B)(J \cap \overline{U}_AZ_L(\goth{o}))$. To see that this is the case, observe that, for $\tau = u\pi^j$ with $u \in \goth{o}^{\times}$ and $j > 0$, we can factor $x_{-\alpha_1}(\tau) = bh$, where $$b = \begin{pmatrix} g & 0 \\ 0 & \det g \cdot (g')^{-1} \end{pmatrix}\textup{ with } g = \begin{pmatrix} (1-\omega_1 \tau^2)^{-1} & -\omega_1 \tau(1-\omega_1 \tau^2)^{-1}\\ 0 & 1\end{pmatrix},$$ and $$h = \begin{pmatrix} \gamma & 0 \\ 0 & \det \gamma\cdot (\gamma')^{-1} \end{pmatrix} \textup{ with } \gamma = \begin{pmatrix} 1 & \omega_1 \tau \\ \tau & 1 \end{pmatrix}.\footnote{Recall that the parameter $-\omega_1$ was defined to be an entry of $A$ back in \ref{sec:besselmodel}.}$$ 
\end{proof}

\begin{proof}[proof of Proposition \ref{prop:iwahori}]
We begin by noting that, using the usual Iwahori factorization (as in Lemma \ref{iwahoria}), it suffices to show that every element in $J \cap \overline{U} \cap L_A$ is contained in $(J \cap B)(J\cap \overline{U}_AZ_L(\goth{o}))$. Let $\overline{u} \in J \cap \overline{U} \cap L_A$. We can factor $\overline{u}$ into a product of elements from the root subgroups contained in $L_A$, so that $$\overline{u} = \prod_{\alpha \in \Phi_L^+} u_{-\alpha},$$ where $u_{-\alpha} \in x_{-\alpha}((\pi))$. Note that, if $\overline{u}$ has no nontrivial factors when it is factored into root subgroups, then $\overline{u} = 1$. Now, suppose that $\overline{u}$ has $k$ distinct nontrivial factors when it is factored into root subgroups as $\overline{u} = \prod_{\alpha \in \Phi_L^+} u_{-\alpha}$. Observe that, since each of the roots in $\Phi_L$ is a short root, if $\alpha \in \Phi_L^+$, then $\alpha = \sum_{i=1}^{n-1} c_i\alpha_i$ where $c_i \in \{0,1\}$; let $c(\alpha) := \sum_{i=1}^{n-1} c_i$. Write $\overline{u}$ as a product where the $u_{-\alpha}$'s are ordered from left to right by increasing $c(\alpha)$. Let $\beta \in \Phi_L^+$ be the root such that $u_{-\beta}$ is the rightmost factor in $\overline{u}$ as described above. Then, by Lemma \ref{iwahoria}, we can write $u_{-\beta} = t_{\beta}u_{\beta}z_{\beta}$, with $t_{\beta} \in T\cap J$, $u_{\beta} \in x_{\beta}(\goth{o})$, and $z_{\beta} \in J \cap \overline{U}_AZ_L(\goth{o})$. Observe that $t_{\beta}^{-1}u_{-\alpha}t_{\beta} \in x_{-\alpha}((\pi))$, so that moving $t_{\beta}$ all the way to the left in this factorization of $\overline{u}$ leaves us with a factorization of $t_{\beta}^{-1}\overline{u}(u_{\beta}z_{\beta})^{-1}$ into elements from the same root subgroups, minus $x_{\beta}$, in the same order that they were in in the initial factorization of $\overline{u}$. Let $u_{-\alpha}$ now refer to the element of $x_{-\alpha}((\pi))$ in the factorization of $t_{\beta}^{-1}\overline{u}(u_{\beta}z_{\beta})^{-1}$ into root subgroups.




We would like to show that we can move $u_{\beta}$ across $\prod_{\alpha \neq \beta} u_{-\alpha}$ and end up with $b\left(\prod_{\alpha \neq \beta} u_{-\alpha}\right)z_{\beta}$. To this end, we will make use of the following properties (assume $\alpha_1,\alpha_2 \in \Phi_L^+$):

\begin{enumerate}
\item \label{fact1} If $c(\alpha)< c(\alpha')$ then $x_{-\alpha}(t)x_{\alpha'}(s) = x_{\alpha'}(s)x_{\alpha'-\alpha}(ts)x_{-\alpha}(t)$, and if $\alpha'-\alpha \in \Phi$ then $\alpha'-\alpha \in \Phi_L^+$ and $c(\alpha'-\alpha) < c(\alpha')$; otherwise $x_{-\alpha}(t)$ and $x_{\alpha'}(s)$ commute.
\item If $c(\alpha)> c(\alpha')$ then $x_{-\alpha}(t)x_{\alpha'}(s) = x_{\alpha'}(s)x_{\alpha'-\alpha}(ts)x_{-\alpha}(t)$, and if $\alpha'-\alpha \in \Phi$ then $\alpha'-\alpha \in \Phi_L^-$ and $c(\alpha'-\alpha) < c(\alpha)$; otherwise $x_{-\alpha}(t)$ and $x_{\alpha'}(s)$ commute.
\item If $c(\alpha) = c(\alpha')$ but $\alpha'\neq \alpha$, then $x_{-\alpha}(t)$ and $x_{\alpha'}(s)$ commute.
\item Thinking of $x_{\alpha}(t)$ as a subgroup of $\GL(2)$ embedded in $G$, $$x_{-\alpha}(t)x_{\alpha}(s) = \begin{pmatrix} (1+ts)^{-1} & \\  & 1+ts\end{pmatrix} \begin{pmatrix} 1 & s(1+ts) \\ & 1 \end{pmatrix} \begin{pmatrix} 1 &  \\ t(1+ts)^{-1} & 1 \end{pmatrix}.$$
\end{enumerate}



From these properties, we see that when we move $u_{\beta}$ across $\prod u_{-\alpha}$ and next to $t_{\beta}$, we are left with an element that we can factor into root subgroups where the roots in question may be in $\Phi_L^+$ or $\Phi_L^-$, but we know that for each such $\alpha$, $c(\alpha) < c(\beta)$ (or $c(-\alpha) < c(\beta)$). At this point, we move each factor of $x_{\alpha}(t)$ with $\alpha \in \Phi_L^+$ left until there are no factors of the form $x_{\alpha'}(s)$ with $\alpha' \in \Phi_L^-$ to its left, starting with the leftmost such factor. 

Additionally, we observe from these properties that commuting $x_{\alpha}(s)$ past $x_{\alpha'}(t)$ (with $\alpha,\alpha'$ as in the previous sentence) will produce either (a) a factor of $x_{\alpha+\alpha'}(st)$ where either $\alpha+\alpha' \in \Phi_L^-$ or $\alpha+\alpha' \in \Phi_L^+$ with $c(\alpha+\alpha')< c(\alpha)$, (b) a factor $t_{\alpha}$ in $h_{\alpha}(\goth{o})$ and a factor of $x_{\alpha}(s(1+ts))$, or (c) nothing, if the two factors commute. Note that, in case (b), we can move $t_{\alpha}$ left across all factors of the form $x_{\alpha''}(z)$ with $\alpha'' \in \Phi_L^-$ without creating any additional factors of root subgroups, as described earlier. Thus, we see that the process of moving factors of the form $x_{\alpha}(s)$ with $\alpha \in \Phi_L^+$ will ultimately terminate, leaving us with a factorization of $\overline{u}$ as $$\overline{u} = b \left(\prod_{\alpha \in \Phi_L^+} x_{-\alpha}(r_{\alpha})\right) z_{\beta},$$ where $b \in J \cap B$ and where we know that there is no nontrivial factor of $x_{-\beta}((\pi))$ in this product. We repeat this process on $\prod_{\alpha \in \Phi_L^+} x_{-\alpha}(r_{\alpha})$, and then again until we are left with a factorization $\overline{u} = bz$ with $b \in J \cap B$ and $z \in J \cap \overline{U}_AZ_L(\goth{o})$ - note that we can be sure that this process will terminate since (a) at each step, we are removing the representative from a specific root subgroup from the product; (b) the maximum value of $c(\alpha)$ in a given factorization is no larger than the maximum value in the previous factorization; (c) the value of $c(\alpha)$ for each new factor $u_{-\alpha}$ introduced in the process of moving $u_{-\beta}$ across the product is strictly smaller than $c(\beta)$; and (d) $|\Phi_L^+|< \infty$.



\end{proof}

\begin{proof}[proof of Theorem \ref{thm:iwahori}]
We begin by looking at the right-hand side, $T_w\pi^{\lambda} \cdot v_{\ve}$. We will use the commutativity of the diagram \eqref{diagram} and the dominance of $\lambda$ to show that $$\calB(\phi_w \ast 1_{J\pi^{\lambda}J}) = T_w\pi^{\lambda} \cdot v_{\ve},$$ so that it suffices to show that \begin{equation}\label{eqn:torus} \calB(\pi^{-\lambda}\cdot \phi_w) = \frac{1}{m(J\pi^{\lambda}J)}\calB(\phi_w \ast 1_{J\pi^{\lambda}J}).\end{equation} In order to see that this second equality holds, first note that $\pi^{-\lambda}\cdot \phi_w = \eta(1_{T(\goth{o})UwJ\pi^{\lambda}})$ by definition (here we emphasize the definition of $\eta$ as a vector-space isomorphism from $C_c(T(\goth{o})U \bs G)$ to $\ind_B^G(\chi_{\textup{univ}}^{-1})$). Now, if we look at $\calB(\phi_w \ast 1_{J\pi^{\lambda}J}) = \calB(\eta(1_{T(\goth{o})UwJ} \ast 1_{J\pi^{\lambda}J}))$, we see from the definition of the convolution that $$\calB(\phi_w \ast 1_{J\pi^{\lambda}J}) = \int_{\overline{U}_AZ_L(\goth{o})} \int_{J \bs J\pi^{-\lambda}J} \int_J \widetilde{\psi}(h)\eta(1_{T(\goth{o})UwJ})(hj\gamma)\,dj\,d\gamma\,dh.$$ Using Proposition \ref{prop:iwahori} and making the change of variables $h \mapsto hj^{-1}$, the integral above simplifies to $$\calB(\phi_w \ast 1_{J\pi^{\lambda}J}) = m(J\pi^{\lambda}J) \int_{\overline{U}_AZ_L(\goth{o})} \widetilde{\psi}(h)\eta(1_{T(\goth{o})UwJ\pi^{\lambda}})(h)\,dh,$$ since the conductor of $\psi$ is $\goth{o}$. Thus, we have established \eqref{eqn:torus}.


To see that $\calB(\phi_w \ast 1_{J\pi^{\lambda}J}) = T_w\pi^{\lambda}\cdot v_{\ve}$, we first note that $$\phi_w \ast 1_{J\pi^{\lambda}J} = \eta((1_{T(\goth{o})UJ} \ast T_w)\ast 1_{J\pi^{\lambda}J}) = \eta((T_w\pi^{\lambda})\cdot 1_{T(\goth{o})UJ}),$$ where the second equality follows because $\lambda$ is dominant. Thus, by Theorem \ref{thm:main}, we see that $$\calB(\phi_w \ast 1_{J\pi^{\lambda}J}) = T_w\pi^{\lambda} \cdot v_{\ve}.$$

\end{proof}

As noted at the beginning of the section, the linearity of $\calB$ gives us the following immediate corollary regarding $\phi^{\circ}$:

\begin{Corollary}\label{spherical}
For dominant $\lambda$, $$\calB(\pi^{-\lambda} \cdot \phi^{\circ}) = \frac{1}{m(J\pi^{\lambda}J)} \sum_{w \in W} T_w\pi^{\lambda}\cdot v_{\ve}.$$
\end{Corollary}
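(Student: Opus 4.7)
The plan is to deduce this corollary directly from Theorem \ref{thm:iwahori} together with the decomposition of the spherical vector into Iwahori-fixed pieces. Recall that from the Iwahori-Bruhat decomposition the author has already recorded the identity
\[
\phi^{\circ} = \sum_{w \in W} \phi_w,
\]
where $\phi_w = \eta(1_{T(\goth{o})UwJ})$. Since right translation by $\pi^{-\lambda}$ is linear, this immediately yields $\pi^{-\lambda}\cdot \phi^{\circ} = \sum_{w \in W} \pi^{-\lambda}\cdot \phi_w$.

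First I would apply the functional $\calB$ to both sides of the above identity. Because $\calB$ is an $R$-linear map on $\ind_B^G(\chi_{\textup{univ}}^{-1})$, linearity of the integral defining $\calB$ gives
\[
\calB(\pi^{-\lambda}\cdot \phi^{\circ}) = \sum_{w \in W} \calB(\pi^{-\lambda}\cdot \phi_w).
\]
Next, I would invoke Theorem \ref{thm:iwahori} on each summand, which identifies each $\calB(\pi^{-\lambda}\cdot \phi_w)$ with $\frac{1}{m(J\pi^{\lambda}J)} T_w \pi^{\lambda} \cdot v_{\ve}$ (valid because $\lambda$ is dominant, which is exactly the hypothesis carried over into the corollary). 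Factoring out the common scalar $\frac{1}{m(J\pi^{\lambda}J)}$ from the sum yields the desired formula.

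Since the real content has already been established in Theorem \ref{thm:iwahori}, there is no genuine obstacle here: the only things one needs to verify are linearity of $\calB$ (immediate from its integral definition), $R$-linearity of the right action of $\pi^{-\lambda}$ (immediate), and that the dominance hypothesis on $\lambda$ carries through uniformly so that Theorem \ref{thm:iwahori} applies to every term $\phi_w$ simultaneously. Thus the proof is genuinely a one-line consequence, and the write-up should simply record this chain of equalities explicitly with a reference back to Theorem \ref{thm:iwahori}.
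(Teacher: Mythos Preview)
Your proposal is correct and matches the paper's own argument exactly: the paper simply remarks that the corollary follows immediately from the linearity of $\calB$ applied to the decomposition $\phi^{\circ} = \sum_{w \in W} \phi_w$, together with Theorem \ref{thm:iwahori}. There is nothing more to add.
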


In order to prove Theorem \ref{thm:spherical}, we will need to make use of an identity of operators on $\Frac(R)$. Recall that when we recorded the action of $T_{s_{\alpha}}$ as an operator on $R$ in \eqref{heckeoperator}, it was with $R$ regarded as a left $\calH$-module with eigenvector 1. Our goal is to calculate the image of the spherical function in the model $V_{\ve}$, and, as noted previously, $R$ is isomorphic to $V_{\ve}$ under the isomorphism $1 \mapsto \pi^{\rho_{\ve}^{\vee}}$. Then, extending the action of $\calH$ to $\Frac(R)$, we realize the operator associated to $T_{s_{\alpha}}$ via this isomorphism (now regarded as an isomorphism of $\Frac(R)$) as $$\goth{T}_{s_{\alpha}}:= \pi^{\rho_{\ve}^{\vee}}T_{s_{\alpha}}\pi^{-\rho_{\ve}^{\vee}},$$ so that we can rewrite Corollary \eqref{spherical} as $$\calB(\pi^{-\lambda}\cdot \phi^{\circ}) = \frac{\pi^{-\rho_{\ve}^{\vee}}}{m(J\pi^{\lambda}J)}\sum_{w \in W} \goth{T}_w\pi^{\lambda+2\rho_{\ve}^{\vee}}.$$ Explicitly, the action of $\goth{T}_{s_{\alpha}}$ on $\Frac(R)$ for a simple root $\alpha$ is given by 

$$\goth{T}_{s_{\alpha}}: f \mapsto \frac{q}{1-\pi^{\alpha^{\vee}}}\left\{\begin{array}{ll} (\pi^{\alpha^{\vee}}-q^{-1})\pi^{\alpha^{\vee}} f^{s_{\alpha}} + (q^{-1}-1)\pi^{\alpha^{\vee}}f & \textup{if $\alpha = \alpha_i$ $(i<n)$} \\ (1-q^{-1}\pi^{\alpha^{\vee}})f^{s_{\alpha}} + (q^{-1}-1)\pi^{\alpha^{\vee}}f & \textup{if $\alpha = \alpha_n$.} \end{array}\right.$$

The operator identity that we will use is a deformed version of the Weyl character formula, established in \cite{BBF2} in a more general setting where $G$ is only assumed to be split, connected and reductive. Let $\Omega$ denote the operator on $\Frac(R)$ given by the Weyl character formula-like expression $$\Omega:= \pi^{-\rho^{\vee}}\prod_{\alpha \in \Phi^+} (1-\pi^{-\alpha^{\vee}})^{-1}\A(\pi^{-\rho^{\vee}}).$$ The deformation depends on the choice of character of the Hecke algebra, as described in the following theorem:

\begin{Theorem}[\cite{BBF2}, Theorem 13]\label{thm:alternator}
If we have a character $\tau$ of $\calH_0$ for $G$, then $$\sum_{w\in W} \goth{T}_w = \left(\prod_{\alpha \in \Phi_{-1}^+}(1-q\pi^{\alpha^{\vee}})\right) \Omega \left(\prod_{\alpha \in \Phi_{q}^+}(1-q\pi^{\alpha^{\vee}})\right),$$ where $\Phi_{-1}^+$, resp.~$\Phi_{q}^+$, are those positive roots that are the same length as the simple roots $\alpha$ such that $\tau(\alpha) = -1$, resp.~$\tau(\alpha)=q$.
\end{Theorem}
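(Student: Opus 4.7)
The plan is to prove this deformed Weyl character--type identity by induction on the length of $w_0 \in W$, exploiting both the Bernstein relation \eqref{bernstein} and the quadratic relation for the Hecke algebra generators. First, I would rewrite each $\goth{T}_{s_\alpha}$ in Demazure--Lusztig normal form $\goth{T}_{s_\alpha} = c_\alpha \cdot s_\alpha + d_\alpha$, where $c_\alpha$ and $d_\alpha$ are explicit rational functions of $\pi^{\alpha^\vee}$ depending on $\tau(T_{s_\alpha})$; this is immediate from \eqref{heckeoperator}. Fixing a reduced word $w_0 = s_{i_1}\cdots s_{i_N}$, I would then expand $\goth{T}_{s_{i_1}}\cdots \goth{T}_{s_{i_N}}$ into a sum indexed by subwords, which by the standard Hecke algebra combinatorics matches the expansion of $\sum_{w \in W}\goth{T}_w$.

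The key structural observation is that the identity $T_{s_\alpha}\sum_{w \in W} T_w = q\sum_{w \in W} T_w$ in $\calH$ characterizes $\sum_{w \in W}\goth{T}_w$ as an operator on $\Frac(R)$ that is a simultaneous eigenvector of each $T_{s_\alpha}$ acting on the left with eigenvalue $q$. A direct check---using the Bernstein relation to commute $T_{s_\alpha}$ through the outer product on the right-hand side, together with the $s_\alpha$-antisymmetry of $\pi^{-\rho^\vee}\prod_{\alpha \in \Phi^+}(1-\pi^{-\alpha^\vee})^{-1}$---shows that the right-hand side enjoys the same eigenvector property. Since both sides are determined by this eigenvalue condition up to a scalar in $R^W$, the identity reduces to agreement on a single test element, for which I would take the constant function $1$.

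To pin down the explicit product factors, I would track the contributions of the quadratic-relation corrections at each step. The translation parts of the $\goth{T}_{s_{i_k}}$ compose, via the Weyl denominator formula $\A(\pi^{-\rho^\vee}) = \pi^{-\rho^\vee}\prod_{\alpha \in \Phi^+}(1-\pi^{-\alpha^\vee})$, into exactly the operator $\Omega$. Meanwhile, each simple root $\alpha$ with $\tau(T_{s_\alpha})=-1$ produces a boundary factor $(1-q\pi^{\alpha^\vee})$ on the left of $\Omega$, and each simple root with $\tau(T_{s_\alpha})=q$ produces one on the right. Commuting these factors across $\Omega$ and using the $W$-equivariance of $\A$ propagates them to all positive roots in the $W$-orbit of the originating simple root---which, since $\GSp(2n)$ has only two root lengths, is exactly $\Phi_{-1}^+$ or $\Phi_{q}^+$.

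The main obstacle I anticipate is the bookkeeping in this last step: verifying that the boundary factors extend cleanly from simple to all positive roots of the corresponding length, without mixing between long and short roots or leaving residual rational-function corrections. I would handle this inductively on $\ell(w_0)$, with the base case a direct rank-one computation (essentially the $\GL(2)$ Casselman--Shalika formula). For the inductive step, I would peel off the leftmost factor $\goth{T}_{s_{i_1}}$, apply the inductive hypothesis to the parabolic subsystem generated by the remaining simple reflections, and absorb the new rank-one boundary factor into the appropriate product over $\Phi_{\pm}^+$ using the $W$-equivariance of $\Omega$. Specializing $\tau$ to the trivial or sign character of $\calH_0$---recovering Macdonald's formula for the spherical function and the Casselman--Shalika formula for the Whittaker function, respectively---would provide a useful consistency check.
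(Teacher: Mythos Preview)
The paper does not contain a proof of this statement: Theorem~\ref{thm:alternator} is quoted verbatim as Theorem~13 of \cite{BBF2} and used as a black box to pass from Corollary~\ref{spherical} to Theorem~\ref{thm:spherical}. There is therefore no proof in the paper against which to compare your proposal.

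That said, a brief remark on the proposal itself. Your second paragraph --- characterizing $\sum_{w\in W}\goth{T}_w$ by the symmetrizer property $T_{s_\alpha}\cdot(\sum_w T_w)=q\sum_w T_w$ and then checking the right-hand side satisfies the same eigenproperty --- is a sound strategy and is in the spirit of how such identities are typically established. The first paragraph, however, contains a conflation: for a reduced word $w_0=s_{i_1}\cdots s_{i_N}$ one has $T_{s_{i_1}}\cdots T_{s_{i_N}}=T_{w_0}$, a single basis element of $\calH$, not $\sum_{w\in W}T_w$. Expanding that product of Demazure--Lusztig operators over subwords gives an expression for $\goth{T}_{w_0}$ as an operator on $\Frac(R)$, which is not the same object as the full symmetrizer you are after. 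You should drop that paragraph and rely entirely on the eigenvector characterization, together with a normalization check on a single element (your choice of the constant function $1$ is fine). The bookkeeping you flag in the last paragraph --- promoting the boundary factors from simple roots to all positive roots of the given length --- is genuine, and an induction on rank (rather than on $\ell(w_0)$) via parabolic subsystems is the cleaner way to organize it.
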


In the case of $\ve$, the set $\Phi_{-1}^{+}$ consists of the long positive roots, and $\Phi_{q}^{+}$ consists of the short positive roots, which leads us to Theorem \ref{thm:spherical}. The image of the spherical function in the Bessel model evaluated on torus elements was previously calculated in the case when $n=2$ by Bump, Friedberg and Furusawa in Corollary 1.8 in \cite{BFF}, and, indeed, it can be confirmed by observation that our formula matches theirs, up to normalization.\footnote{In \cite{BFF}, they work with a choice of unramified principal series $\ind_B^G \chi$ instead of the universal principal series. The parameters denoted $\alpha_1,\alpha_2$ in \cite{BBF2} can be expressed as $\alpha_1^{2} = \chi(\pi^{-(\alpha_1+\alpha_2)^{\vee}})$ and $\alpha_2^2 = \chi(\pi^{\alpha_1^{\vee}})$.}

 
\subsection{Whittaker-Orthogonal Models and the Shalika character}\label{sec:womodels}

In this section, we consider the character $\sigma$ of $\calH_0$, which was defined in Section \ref{sec:intro} to be the character which acts by $q$ on long simple roots and $-1$ on short simple roots. For each of the other three characters of $\calH_0$ on $\GSp(4)$, we have found a subgroup $S \subset G$ such that the model formed by inducing $S$ to $G$ contains that character with multiplicity one - $\sigma$ is the only character for which we have not found an explicit integral realization of $\calL$ as in the diagram \eqref{diagram}. However, even without this information, we can still say what the image of the spherical function under $\calL$ in $V_{\sigma}$, evaluated on torus elements, would have to be, by the commutativity of \eqref{diagram} combined with Theorem \ref{thm:alternator}. As stated in Proposition \ref{prop:shalika}, we will show that the result matches the image of the spherical function in the Whittaker-Orthogonal model (WO-model) defined by Bump, Friedberg, and Ginzburg in \cite{BFG}.

The WO-model is defined as a representation on $\SO(2n+2)$. Let $\overline{U}$ be the opposite unipotent radical of the parabolic subgroup of $\SO(2n+2)$ with a Levi component that is diagonal except for a central $\SO(4)$ block and let $\psi$ be a character of $\overline{U}$ defined as $$\psi(\overline{u}) := \psi_0(\overline{u}_{21} + \overline{u}_{32} + \cdots + \overline{u}_{n-1,n-2} + \overline{u}_{n+1,n-1} + \overline{u}_{n+2,n-1}),$$ where $\psi_0$ is a nontrivial additive character of $F$ with conductor $\goth{o}$. Let $Z(\psi) \simeq \SO(3)$ be the stabilizer of this character contained in the Levi. Then, for an irreducible admissible representation $\theta$ of $\SO(2n+2)$, we say that $\theta$ has a WO-model if there exists a nonzero linear functional $\WO$ on the representation space $V_{\theta}$ of $\theta$ such that $$\WO(\theta(\overline{u}h)v) = \psi(\overline{u})\WO(v),$$ for $\overline{u} \in \overline{U}_{\pi}$, $h \in Z(\psi)$, and $v \in V_{\theta}$. The uniqueness of WO-models is established in Theorem 4.1 in \cite{BFG}. The authors then show, in Theorem 4.2, that, if $\hat{\chi} = \ind_B^G(\chi)$ is an irreducible unramified principal series representation, then $\hat{\chi}$ admits a WO-model if and only if $\hat{\chi}$ is a \emph{local lifting} of an unramified principal series representation of $\Sp(2n)$. We call $\hat{\chi}$ a local lifting from $\Sp(2n)$ if one of the Langlands' parameters of $\hat{\chi}$ is 1. The authors note that this is in conformity with Langlands' functoriality since the L-group of $\Sp(2n)$ is $\SO(2n+1)$, and that if one of the Langlands' parameters is 1 then the given conjugacy class is in the image of the inclusion of L-groups $\SO(2n+1) \into \SO(2n+2)$.\footnote{Recall that the L-group of $\SO(2n+2)$ is $\SO(2n+2)$.}
Now, suppose that we have an unramified principal series representation of $\SO(6)$, $\hat{\chi} = \ind_B^G \chi$, where $\chi = (\chi_1,\hdots,\chi_{n+1})$ with $\chi_1,\hdots,\chi_{n+1}$ quasicharacters of $F^{\times}$. Let $z_i = \chi_i(\pi)$ for each $i$, and let $z = \chi(\pi)$. Then, if $z_{n+1} = 1$, we have the following formula from \cite{BFG} for the image of the spherical vector of $\hat{\chi}$ under $\WO$ evaluated at torus elements $\pi^{\lambda}$, with $\lambda = (\lambda_1,\lambda_2,0)$: 

\begin{Theorem}[\cite{BFG}, Theorem 4.3]\label{thm:BFG}
Let $\WO$ be the WO-functional on $V_{\hat{\chi}}$ such that  $\WO(\phi^{\circ}) = 1$. For $\lambda \in X_{\ast}(T)$, we have $$\WO(z^{-\lambda}\cdot \phi^{\circ}) = z_1^{-2\lambda_1}\cdot \frac{\A(z^{\rho^{\vee}} z_1^{\lambda_1}(1-q^{-1}z_1^{-1}))}{\A(z^{\rho^{\vee}})}.$$
\end{Theorem}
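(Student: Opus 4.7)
The plan is to prove Theorem \ref{thm:BFG} via the Hecke algebra intertwiner framework developed in this paper, by establishing that $\WO$ fits into Conjecture \ref{conjecture} for a suitable character of the finite Hecke algebra, and then applying the deformed Weyl character formula of Theorem \ref{thm:alternator}. Since the relevant principal series on $\SO(6)$ is a local lifting from $\Sp(4)$, the natural finite Hecke algebra here is that of $\GSp(4)$ rather than that of $\SO(6)$, and the character in question should be $\sigma$; the link between $\sigma$ and the WO-model is precisely the subject of Proposition \ref{prop:shalika}, so one expects the same alternator expression to govern this theorem.

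First, I would verify that when restricted to $\ind_B^G(\chi^{-1})^J$, the WO-functional yields an $\calH(\GSp(4))$-module map into $V_{\sigma}$. By the uniqueness of the WO-model (Theorem 4.1 in \cite{BFG}), $\WO\circ A_{s_{\alpha}}$ is a scalar multiple of $s_{\alpha}\circ \WO$ for each simple $\alpha$; these scalars can be computed by evaluating on $\phi_1$ and $\phi_{s_{\alpha}}$ exactly as in Proposition \ref{intertwiners}. The long-root calculation reduces to a rank-one Whittaker-style integral (the $\GL(2)$ computation) that produces the familiar $(z^{\alpha^{\vee}}-q^{-1})$ factor, while the short-root calculation proceeds by observing that $\WO(\phi_1)$ and $\WO(\phi_{s_{\alpha}})$ are both invariant under $s_{\alpha}$, yielding the $(1-q^{-1}z^{\alpha^{\vee}})$ factor but with the roles of short and long roots reversed from those in the Bessel case. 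Comparing with \eqref{heckeintertwiner} and \eqref{heckeoperator} identifies the character as $\sigma$.

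Next, following the derivation of Theorem \ref{thm:spherical}, I would combine the linearity of $\WO$ with the analogue of Theorem \ref{thm:iwahori} for $\WO$ and $\sigma$ to express $\WO(z^{-\lambda}\cdot \phi^{\circ})$ as a prefactor times $\sum_{w\in W} \goth{T}_w\, z^{\lambda + 2\rho_{\sigma}^{\vee}}$ acting in $\Frac(R)$. Applying Theorem \ref{thm:alternator} with $\tau = \sigma$, the positive root sets swap from the Bessel situation, so $\Phi_{-1}^+$ consists of short positive roots and $\Phi_q^+$ consists of long positive roots. This produces a closed-form alternator expression for the image of $\phi^{\circ}$ entirely in terms of $\GSp(4)$ Langlands parameters.

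The main obstacle will be the translation between the $\GSp(4)$ alternator and the $\SO(6)$ parametrization appearing in the theorem. Under the local lift, the Satake parameters of $\hat{\chi}$ on $\SO(6)$ satisfy $z_3 = 1$, and one must match the Cartan/root data of $\GSp(4)$ with the diagonal embedding $\SO(5)\hookrightarrow \SO(6)$ of dual groups. Carrying out this substitution, together with the shifts by $\rho^{\vee}$ and $2\rho_{\sigma}^{\vee}$ introduced through Theorem \ref{thm:alternator}, should produce the prefactor $z_1^{-2\lambda_1}$ and the specific argument $z^{\rho^{\vee}}z_1^{\lambda_1}(1-q^{-1}z_1^{-1})$ inside the alternator. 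Finally, the normalization $\WO(\phi^{\circ})=1$ can be verified by specializing to $\lambda = 0$ and comparing with the Macdonald-Casselman formula for the spherical function in $\ind_B^G \chi$, or equivalently by comparing at $\lambda=0$ with the unspecialized statement of the Weyl denominator identity for $\SO(6)$.
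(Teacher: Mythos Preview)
The paper does not prove this theorem; it is quoted verbatim from \cite{BFG} (their Theorem~4.3) and then \emph{used} as input to the proof of Proposition~\ref{prop:shalika}. So there is no ``paper's own proof'' to compare against: in \cite{BFG} the formula is obtained by a direct Casselman--Shalika type computation on $\SO(2n+2)$, independent of the Hecke-algebra machinery developed here.

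Your proposal also has a genuine structural gap. The WO-functional lives on principal series of $\SO(6)$, whose Dynkin diagram is simply laced (type $D_3=A_3$). Consequently $\calH_0(\SO(6))$ has only two linear characters, the trivial and sign characters; there is no character $\sigma$ of $\calH_0(\SO(6))$ that distinguishes ``long'' from ``short'' roots, and the intertwining-constant calculation you outline (one family giving $(z^{\alpha^{\vee}}-q^{-1})$, the other giving $(1-q^{-1}z^{\alpha^{\vee}})$) cannot occur for the $\SO(6)$ simple roots. Asserting that ``the natural finite Hecke algebra here is that of $\GSp(4)$'' does not repair this: the Iwahori-fixed vectors of $\ind_B^{\SO(6)}\chi$ carry no $\calH(\GSp(4))$-module structure, so Proposition~\ref{intertwiners}, equation~\eqref{heckeintertwiner}, and Theorem~\ref{thm:alternator} simply do not apply to $\WO$ as stated. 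The logic of the paper runs in the opposite direction: one computes $\calF_{v_{\sigma}}$ purely on the $\GSp(4)$ side via Theorem~\ref{thm:alternator}, computes $\WO$ on the $\SO(6)$ side via the cited Theorem~\ref{thm:BFG}, and then checks that the two answers agree after matching Satake parameters under the local lift. Inverting this to deduce Theorem~\ref{thm:BFG} from the $\sigma$-picture would, at best, be circular with Proposition~\ref{prop:shalika}.
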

\begin{remark}
Note that all of the root data here are given with respect to the root system for $\SO(2n+2)$, with $\rho$ denoting the half-sum of the positive roots.
\end{remark}

We will conflate $\hat{\chi}$ with the representation of $\GSp(4)$ of which it is a local lifting (and, hence, we will also conflate the spherical functions for both representations). In the following proof, let $\lambda_1,\lambda_2 \in X_{\ast}(T)$ with $\lambda_1 =  (1,0)$ and $\lambda_2 = (0,1)$.

\begin{proof}[proof of Proposition \ref{prop:shalika}]

We begin by evaluating the two functionals at $z^{\lambda_1}$ and $\pi^{\lambda_1}$, respectively, where $\lambda_1$ (resp.~$\lambda_2$) is embedded in the cocharacter group of the torus of $\SO(6)$ as $(1,0,0)$ (resp.~$(0,1,0)$). In this case, we have that $$\WO(z^{-\lambda}\cdot \phi^{\circ}) = \frac{\A(z^{\rho^{\vee}}z_1)-q^{-1}\A(z^{\rho^{\vee}})}{\A(z^{\rho^{\vee}})}.$$ In order to give explicit expressions for these alternators, we will need to make a choice of quasicharacters $\mu_i$ such that $\mu_i^2 = \chi_i$ for each $i$ - there are two options for each $\mu_i$, and we make one arbitrarily. Let $\xi_i = \mu_i(\pi)$ for each $i$. We find that 
\begin{align*}
\A(z^{\rho^{\vee}}) = \A(\xi_1^3\xi_2) &= \frac{(\xi_1^2 + 1)(\xi_1\xi_2 + 1)(\xi_1\xi_2 - 1)(\xi_2^2 + 1)(\xi_1 + \xi_2)(\xi_1 - \xi_2)}{\xi_1^3\xi_2^3}, \textup{ and}
\\ \A(z^{\rho^{\vee}}z_1) = \A(\xi_1^5\xi_2) &= \frac{\xi_1^4\xi_2^2 + \xi_1^2\xi_2^4 - \xi_1^2\xi_2^2 + \xi_1^2 + \xi_2^2}{\xi_1^2\xi_2^2}\cdot\A(\xi_1^3\xi_2).
\end{align*}
Simplifying, we see that \begin{equation}\label{wospherical} \WO(z^{-\lambda}\cdot \phi^{\circ}) = \frac{\xi_1^4\xi_2^2 + \xi_1^2\xi_2^4 - \xi_1^2\xi_2^2 + \xi_1^2 + \xi_2^2 - q^{-1}\xi_1^2\xi_2^2}{\xi_1^2\xi_2^2} = z_1+z_2-1+z_2^{-1}+z_1^{-1}-q^{-1}.\end{equation}

On the other hand, in order to calculate $\calF(\pi^{-\lambda}\cdot 1_{T(\goth{o})UK})$, where $\calF$ is the functional from the universal principal series $M$ to $V_{\sigma}$ defined in the diagram \eqref{diagram}, we can use Theorem \ref{thm:alternator} with $\Phi_{-1}^{+} = \{\alpha_1,\alpha_1+\alpha_2\}$ and $\Phi_{q}^{+} = \{\alpha_2,2\alpha_1+\alpha_2\}$, along with the commutativity of \eqref{diagram}. Hence, we have that 
\begin{align*}
\calF(\pi^{-\lambda}\cdot 1_{T(\goth{o})UK}) &= \sum_{w \in W} \goth{T}_w \cdot \pi^{\lambda}
\\&= N\cdot \frac{\pi^{2\lambda_1+\lambda_2} + \pi^{\lambda_1+2\lambda_2} - q^{-1}\pi^{\lambda_1+\lambda_2} - \pi^{\lambda_1+\lambda_2} + \pi^{\lambda_1}+ \pi^{\lambda_2}}{\pi^{\lambda_1+\lambda_2}},
\\\textup{where } N &= \frac{-(q^{-1} + 1)(q^{-1}\pi^{\lambda_2} - \pi^{\lambda_1})(\pi^{\lambda_1+\lambda_2} - q^{-1})}{\pi^{2\lambda_1+\lambda_2}}.
\end{align*} As defined in \eqref{diagram}, $\calF$ is not normalized so that $\calF(1_{T(\goth{o})UK}) = 1$, as WO is in Theorem \ref{thm:BFG}. Indeed, we see that 
\begin{align*}
\calF(1_{T(\goth{o})UK}) &= \sum_{w \in W} \goth{T}_w \cdot 1
\\&= N.
\end{align*}
So, if we normalize $\calF$ so that $\calF(1_{T(\goth{o})UK}) = 1$, we see that $$\calF(\pi^{-\lambda}\cdot 1_{T(\goth{o})UK}) = \pi^{\lambda_1} + \pi^{\lambda_2}-1+\pi^{-\lambda_2}+\pi^{-\lambda_1}-q^{-1},$$ which agrees with \eqref{wospherical}, indicating that the WO-functional is a lift of the proposed intertwiner corresponding to $\sigma$.

\end{proof}

\section{Unique Models and the Springer Correspondence} \label{sec:springer}

In this section, we will describe how we expect to construct a gGGr containing $V_{\tau}$ in its $J$-fixed vectors for a given irreducible representation $\tau$ of $\calH_0$.

As described in Section \ref{sec:intro}, the trivial character and the sign character of $\calH_0$ are connected to the spherical model and the Whittaker model, respectively, and the character $\ve$ of $\calH_0$ that acts by $-1$ on long simple roots and by $q$ on short simple roots is similarly connected to the Bessel model for $G = \SO(2n+1)$ or $G = \Sp(2n)$. As mentioned in Section \ref{sec:gggr}, we believe that the Springer correspondence plays a major role in this connection.






The Springer correspondence is a bijection between irreducible representations of $W$ and pairs $(\calO, \mu)$, where $\calO$ is a nilpotent orbit of the Lie algebra and $\mu$ is a character of $A(\calO)$, a subgroup of the $G$-equivariant fundamental group. Geometrically, this bijection arises from the realization of the irreducible representations of $W$ in the top degree cohomology group of partial flag varieties. If $G$ is defined over a finite field,  Kawanaka suggests in Conjecture 2.4.5 in \cite{Kaw} that $V_{\tau}$ should appear, with multiplicity one, in the $B$-fixed vectors of the gGGr $\Gamma_{A,\alpha}$, where the orbit containing $A$ is associated to $\tau$ primarily using the Springer correspondence. Taking inspiration from Kawanaka's conjecture, it is believed that the analogous picture for $G$ defined over a $p$-adic field is the following:
$$\begin{tikzpicture}[>=angle 90]
\matrix(a)[matrix of math nodes,
row sep=3em, column sep=4em,
text height=1.5ex, text depth=0.25ex]
{\ind_B^G(\chi_{\textup{univ}}^{-1})^{J} & \Gamma_{A,\alpha}^J \\& V_{\tau} \simeq R\\};
\path[->,font=\scriptsize]
(a-1-1) edge node[above]{$\calF$}
(a-1-2);
\path[->,font=\scriptsize]
(a-1-2) edge node[right]{(evaluate at 1)}
(a-2-2);
\path[dashed,->,font=\scriptsize]
(a-1-1) edge node[below]{$\calF_1$}
 (a-2-2);
\end{tikzpicture}$$
In this diagram, $\calF$ is a left $\calH$-intertwiner of the universal principal series and the gGGr $\Gamma_{A,\alpha}$, and $\calF_1$ is the functional obtained by evaluation at 1, i.e.~$\phi \mapsto \calF(\phi)(1) \in R$, where $\phi \in \ind_B^G (\chi_{\textup{univ}}^{-1})^J$.

It should be noted that $A$ is not simply the nilpotent orbit associated to $\tau$ under the bijection described by the Springer correspondence - as we will describe below, the connection between $A$ and $\tau$ goes a bit deeper than this. We emphasize here that the exact nature of this connection is still under investigation, in part due to the limited number of data points currently available - we hope to find explicit examples that fit into this program beyond the three mentioned above. 



In what follows, we will restrict ourselves to the setting of $\GSp(2n)$. In this case, we can regard the Springer correspondence as a combinatorial recipe between the two relevant sets, so that we can quickly get to the heart of our proposed connection between gGGr's and characters of $\calH_0$.  Recalling that for type $C_n$, $W$ is the semidirect product of $S_n$ and $(\Z/2)^n$, we can parametrize these representations as well as the nilpotent orbits of $\sp(2n)$ using the following theorems, which can be found in \cite{CM}:


\begin{Theorem}[\cite{CM}, Theorem 10.1.2]
The irreducible representations of the Weyl group $W$ of type $C_n$ are parametrized by ordered pairs $({\bf p},{\bf q})$ of partitions such that $|{\bf p}| + |{\bf q}| = n$. The resulting representation has dimension $$\dim\pi_{({\bf p},{\bf q})} = {n \choose |{\bf p}|}(\dim \pi_{{\bf p}})(\dim \pi_{{\bf q}}).$$ We also have $$\pi_{(\overline{{\bf p}},\overline{{\bf q}})} \simeq \pi_{({\bf q},{\bf p})} \otimes sgn,$$ where $\overline{{\bf p}}$ denotes the conjugate partition of ${\bf p}$, and $sgn$ denotes the sign character. The representation $\pi_{({\bf p},{\bf q})}$ is characterized by the following property. Let V be the subspace of $\pi_{({\bf p},{\bf q})}$ consisting of all vectors on which the first $|{\bf p}|$ copies of $\Z/2$ act trivially while the remaining $|{\bf q}|$ copies act by $-1$. Then $S_{|{\bf p}|} \times S_{|{\bf q}|}$ acts on $V$ according to the representation $\pi_{{\bf p}}\times \pi_{{\bf q}}$.
\end{Theorem}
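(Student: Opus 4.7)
The plan is to apply the Wigner--Mackey little group method to the split extension $1 \to (\Z/2)^n \to W \to S_n \to 1$. Since $(\Z/2)^n$ is abelian and normal in $W$, the irreducibles of $W$ are in bijection with pairs consisting of an $S_n$-orbit on the Pontryagin dual $\widehat{(\Z/2)^n}$ together with an irreducible representation of the stabilizer in $S_n$ of a chosen representative character.

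First I would enumerate the orbits: a character of $(\Z/2)^n$ is determined by the subset $T \subseteq \{1,\dots,n\}$ on which the generators act by $-1$, and $S_n$ acts by permutation of coordinates, so the orbits are indexed by $|T|$. Taking the representative $\chi_0$ with $T = \{|{\bf p}|+1,\dots,n\}$, the stabilizer is $S_{|{\bf p}|}\times S_{|{\bf q}|}$, whose irreducibles are external tensor products $\pi_{\bf p}\boxtimes\pi_{\bf q}$ for partitions ${\bf p}\vdash|{\bf p}|$ and ${\bf q}\vdash|{\bf q}|$. Setting $H=(S_{|{\bf p}|}\times S_{|{\bf q}|})\ltimes(\Z/2)^n$ and
$$\pi_{({\bf p},{\bf q})}:=\Ind_H^W\bigl(\chi_0\otimes(\pi_{\bf p}\boxtimes\pi_{\bf q})\bigr),$$
Mackey's theorem for semidirect products with abelian kernel guarantees that this is irreducible and that every irreducible of $W$ arises uniquely in this way. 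The dimension formula is then immediate from $[W:H]=\binom{n}{|{\bf p}|}$.

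For the subspace characterization, I would note that the $V$ described in the statement is exactly the $\chi_0$-isotypic component of $(\Z/2)^n$ inside $\pi_{({\bf p},{\bf q})}$; by the standard Clifford-theoretic projection formula, this isotypic subspace is isomorphic as an $H$-module to the inducing representation $\chi_0\otimes(\pi_{\bf p}\boxtimes\pi_{\bf q})$, so the restriction of the $H$-action to $S_{|{\bf p}|}\times S_{|{\bf q}|}$ is $\pi_{\bf p}\boxtimes\pi_{\bf q}$, as asserted. For the sign-twist identity, I would verify separately that (i) the restriction of the Coxeter sign character to $(\Z/2)^n$ is the product-of-generators character, so tensoring $\chi_0$ with $sgn|_{(\Z/2)^n}$ yields the character whose $-1$-locus is the complementary subset, swapping the roles of ${\bf p}$ and ${\bf q}$; and (ii) the restriction to each $S_m$-factor is the usual sign representation of $S_m$, which sends $\pi_{\bf p}$ to $\pi_{\overline{\bf p}}$ by the standard fact about Specht modules. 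Reassembling under induction yields $\pi_{({\bf p},{\bf q})}\otimes sgn \simeq \pi_{(\overline{\bf q},\overline{\bf p})}$, which is equivalent to the stated identity. The main obstacle is really only bookkeeping — fixing conventions for $\chi_0$ within its orbit, for the partition labeling of $\widehat{S_m}$, and for the Coxeter sign — and there is no substantive mathematical difficulty beyond the Wigner--Mackey theory itself.
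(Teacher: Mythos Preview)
Your proposal is correct and is the standard Clifford-theoretic/Wigner--Mackey argument for the hyperoctahedral group. However, the paper does not supply its own proof of this theorem: it is quoted verbatim from \cite{CM}, Theorem 10.1.2, and used only as a parametrization device for the subsequent Springer-correspondence discussion. So there is no ``paper's proof'' to compare against; your argument is essentially the one found in the cited reference.
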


Using this parametrization, we can see that the four characters of $\calH_0$ correspond to the pairs of partitions $([n],\emptyset)$, $(\emptyset,[1^n])$, $(\emptyset,[n])$, and $([1^n],\emptyset)$. Using Tits' deformation theorem, we can show that these first two partitions correspond to the trivial and sign characters, respectively. The ordered pair $(\emptyset,[n])$ corresponds to the character of $\calH_0$ acting by $q$ on short simple roots and $-1$ on long simple roots, and $([1^n],\emptyset)$ corresponds to the character acting by $-1$ on short simple roots and $q$ on long simple roots. 

For the nilpotent orbits and their corresponding component groups, we find that we have the following parametrizations in type $C_n$:

\begin{Theorem}[\cite{CM}, Theorem 5.1.3]\label{thm:orbits2}
Nilpotent orbits in $\sp(2n)$ are in one-one correspondence with the set of partitions of $2n$ in which odd parts occur with even multiplicity.
\end{Theorem}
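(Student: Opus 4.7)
The plan is to adapt the standard proof of the classification of nilpotent orbits in classical Lie algebras, which has three moving parts: (a) attaching a partition of $2n$ to any nilpotent $X \in \sp(2n)$; (b) showing that the partition has the stated parity property; and (c) showing that the partition is a complete $\Sp(2n)$-conjugacy invariant, with every admissible partition realized. I would present these as three separate lemmas.

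First I would recall that a nilpotent endomorphism of a $2n$-dimensional vector space $V$ has a Jordan normal form determined up to $\GL(V)$-conjugacy by a partition of $2n$, so to each nilpotent $X \in \sp(2n)$ we associate the Jordan partition $\lambda(X)$. The work is entirely in the interaction between $X$ and the symplectic form $\omega$. The key technical tool I would invoke is the following: for $X \in \sp(V)$ nilpotent, the subspaces $\ker X^k$ form a filtration, and the bilinear form
\begin{equation*}
\omega_k(\bar v, \bar w) := \omega(v, X^{k-1} w), \qquad \bar v, \bar w \in \ker X^k / (\ker X^{k-1} + X(\ker X^{k+1})),
\end{equation*}
is well-defined and nondegenerate, and it is symmetric when $k$ is even and skew-symmetric when $k$ is odd. (This follows from the symplectic identity $\omega(Xv,w) = -\omega(v,Xw)$ by a direct computation.)

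Next I would deduce the parity constraint in (b). The dimension of the quotient above equals the multiplicity $m_k$ of the part $k$ in $\lambda(X)$. Since $\omega_k$ is a nondegenerate skew-symmetric form on a space of dimension $m_k$ when $k$ is odd, and nondegenerate skew forms require even-dimensional spaces, the multiplicity of each odd part must be even. This gives one direction of the bijection. For existence, I would exhibit explicit representatives: a single odd-size Jordan block does not admit a compatible symplectic form, but a pair of odd-size Jordan blocks does (realized by a hyperbolic pairing between two cyclic $X$-modules), and a single even-size Jordan block does (realized on $V = F[X]/(X^{2m})$ with form $\omega(X^i,X^j) = (-1)^i \delta_{i+j, 2m-1}$). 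Taking orthogonal direct sums of these basic building blocks realizes every partition of $2n$ with odd parts of even multiplicity.

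Finally, for uniqueness under $\Sp(2n)$-conjugacy (part (c)), I would argue that two nilpotents with the same Jordan partition are conjugate in $\Sp(2n)$ by proving a Krull–Schmidt-type theorem for pairs $(V, X, \omega)$: any such triple decomposes into an orthogonal sum of the indecomposable pieces listed above (single even blocks and hyperbolic pairs of odd or even blocks), and the decomposition is unique up to isomorphism and reordering. The main obstacle is this last step—the $\GL$-orbit need not coincide a priori with the $\Sp$-orbit, and one must rule out that two genuinely different symplectic structures on the same Jordan-type module give non-conjugate symplectic nilpotents. I would handle this by an inductive peeling-off argument: starting from the largest part $k$ of $\lambda(X)$, find a cyclic $X$-submodule of dimension $k$ that is either self-orthogonal or paired hyperbolically with another such, split it off as an $\omega$-orthogonal summand, and induct on $\dim V$. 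Combined with (a) and (b), this yields the stated bijection.
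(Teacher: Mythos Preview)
The paper does not prove this theorem; it is quoted verbatim from Collingwood--McGovern \cite{CM} and used as input for the Springer-correspondence discussion in Section~\ref{sec:springer}. So there is no proof in the paper to compare against. Your outline is essentially the standard argument that appears in \cite{CM}: pass to the Jordan type, analyze the induced forms $\omega_k$ on the multiplicity spaces to extract the parity constraint, build explicit models for the indecomposable pieces, and prove an orthogonal Krull--Schmidt statement to get injectivity.

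Two small points worth tightening if you flesh this out. First, the nondegeneracy of $\omega_k$ on $\ker X^k / (\ker X^{k-1} + X(\ker X^{k+1}))$ is asserted but not argued; it is the one place where the filtration bookkeeping actually matters, and it is what forces the multiplicity space to carry a genuine (rather than degenerate) skew form when $k$ is odd. Second, in the uniqueness step you list ``hyperbolic pairs of odd or even blocks'' among the indecomposables, but over an algebraically closed field a hyperbolic pair of even blocks is \emph{not} indecomposable: it splits as an orthogonal sum of two single even blocks. Your peeling-off induction needs to handle this (i.e., show that any two nondegenerate symplectic structures on a single even Jordan block are $\Sp$-equivalent), otherwise the classification would a priori have more orbits than partitions. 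None of this is a real obstruction---the standard references handle both points---but they are the places where a grader would push back.
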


\begin{Theorem}[\cite{CM}, Corollary 6.1.6]\label{thm:orbits}
$$A(\calO_{{\bf d}}) = \left\{\begin{array}{ll} (\Z/2)^b & \textup{if all even parts have even multiplicity} \\ (\Z/2)^{b-1} & \textup{otherwise,}\end{array}\right.$$ where $b$ is the number of distinct nonzero parts of ${\bf d}$.
\end{Theorem}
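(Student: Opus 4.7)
The plan is to reduce the computation of $A(\calO_{\bf d}) = Z_G(e)/Z_G(e)^\circ$ to a short linear-algebra analysis of the reductive part of the centralizer. I would first invoke Jacobson--Morozov to embed a representative $e \in \calO_{\bf d}$ in an $\sl_2$-triple $(e,h,f) \subset \sp(2n)$. The standard Levi decomposition
$$Z_G(e) = Z_G(e,h,f) \ltimes R_u,$$
with $R_u$ unipotent, gives $A(\calO_{\bf d}) = Z_G(e,h,f)/Z_G(e,h,f)^\circ$, so the entire problem is reduced to understanding the reductive centralizer.

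Next, I would decompose the defining representation $V = F^{2n}$ as an $\sl_2$-module,
$$V = \bigoplus_{k\geq 1} V_k \otimes W_k,$$
where $V_k$ is the unique $k$-dimensional irreducible representation of $\sl_2$ and $\dim W_k = m_k$ is the multiplicity of $k$ as a part of ${\bf d}$. A short computation with the pairing between opposite weight spaces of $V_k$ shows that the symplectic form on $V$ restricts to a non-degenerate pairing on each $W_k$ that is alternating for odd $k$ and symmetric for even $k$, since the unique $\sl_2$-intertwiner $V_k \otimes V_k \to F$ has sign $(-1)^{k-1}$. Any $g$ commuting with $(e,h,f)$ must preserve the isotypic decomposition and act there as $\id_{V_k} \otimes g_k$; preservation of the symplectic form then forces
$$Z_G(e,h,f) \;\cong\; \prod_{k \textup{ odd}} \Sp(W_k) \;\times\; \prod_{k \textup{ even}} O(W_k).$$

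Finally I would read off the component group. The symplectic factors are connected, while each orthogonal factor contributes a $\Z/2$ via $\det$. For each distinct nonzero part $k$ I would single out the block-sign element $\varepsilon_k$ acting as $-\id$ on $V_k \otimes W_k$ and trivially on every other isotypic summand, and verify directly that $\varepsilon_k \in Z_G(e,h,f)$. These elements are candidate generators, and the dichotomy in the statement records whether a single relation among them is nontrivial: the total product $\prod_k \varepsilon_k$ equals the central scalar $-I_V$, and whether this product lies in $Z_G(e,h,f)^\circ$ is decided by computing $\det(\varepsilon_k|_{W_k}) = (-1)^{m_k}$ on each orthogonal factor---precisely the case split between ``all even parts have even multiplicity'' and its complement.

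The main obstacle is this last bookkeeping step: showing that the $\varepsilon_k$ really do generate the component group, identifying the single nontrivial relation coming from $\prod_k \varepsilon_k = -I_V$, and verifying that no further relations arise. Odd parts do not contribute new components, but they do enter the count of $b$, which is why they must be tracked alongside the even-part parities that govern whether the relation is active. Once this is made precise the exponent reads off as $b$ or $b-1$ according to the parities of the $m_k$ at even $k$, and the theorem follows.
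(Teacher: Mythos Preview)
The paper does not prove this statement at all: it is quoted verbatim as Corollary~6.1.6 of \cite{CM} and used as a black box. So there is no proof in the paper to compare your proposal against. Your outline is in fact the standard Collingwood--McGovern argument (Jacobson--Morozov, Levi decomposition of the centralizer, isotypic decomposition of $V$ under the $\sl_2$-triple, identification of the reductive centralizer as $\prod_{k\ \textup{odd}} \Sp(W_k) \times \prod_{k\ \textup{even}} O(W_k)$, then reading off components), so in spirit you are reproducing the cited source rather than offering an alternative.

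One genuine issue in your bookkeeping: you correctly observe that the symplectic factors are connected and hence that odd parts contribute no new components, yet you then try to make odd parts ``enter the count of $b$'' anyway. This cannot be made to work as written. The $\varepsilon_k$ for odd $k$ already lie in $\Sp(W_k)^\circ = \Sp(W_k)$, so they are trivial in the component group from the outset and cannot serve as generators. The component group is generated only by the $\varepsilon_k$ for \emph{even} $k$, and the single relation coming from $-I_V$ (tested via $\det(-I_{m_k})$ on the orthogonal factors) then gives the two cases. If you check this against the paper's own examples just below the theorem---where $[2,1^2]$ is asserted to have trivial component group---you will see that the integer $b$ in the statement must be read as the number of distinct \emph{even} parts, not all distinct nonzero parts; with that reading your argument goes through cleanly, and without it the statement itself is false.
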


In the case of $\sp(4)$, this means we have four nilpotent orbits corresponding to the partitions $[4],[2^2],[2,1^2]$, and $[1^4]$, and for each of these partitions, the group $A(\calO_{{\bf d}})$ is trivial except for the partition $[2^2]$, for which $A(\calO_{{\bf d}}) \simeq Z/2$. Using these parametrizations, the Springer correspondence gives us the following associations between the irreducible representations of $W$ and pairs of nilpotent orbits and characters of the component group:\\

\begin{center}
\begin{tabular}{|l|l|}
\hline
$({\bf p},{\bf q})$ & $({\bf d},\mu_{\calO})$ \\
\hline
$(\emptyset,[1^2])$ & $([1^4], 1)$ \\
\hline
$([2],\emptyset)$ & $([4], 1)$ \\
\hline
$([1^2],\emptyset)$ & $([2,1^2], 1)$ \\
\hline
$(\emptyset,[2])$ & $([2^2], 1)$ \\
\hline
$([1],[1])$ & $([2^2], sgn)$ \\
\hline
\end{tabular}
\end{center}

If we use Kawanaka's gGGr construction to build the Whittaker functional, we see that the nilpotent element $A$ that we start with lives in the orbit $[4]$ according to Theorem \ref{thm:orbits}. If we do the same thing for the spherical functional, we see that we begin with an element of the orbit $[1^4]$. However, Brubaker, Bump, and Licata showed in \cite{BBL} that the Whittaker functional is an intertwiner for the sign character of $\calH_0$, which is associated to $[1^4]$ via the Springer correspondence, and Brubaker, Bump, and Friedberg showed in \cite{BBF} showed that the spherical functional is an intertwiner for the trivial character of $\calH_0$, which is associated to $[4]$ via the Springer correspondence. This led to the conjecture that, if one starts with an irreducible representation of $\calH_0$, then one should be able to construct a gGGr in which this representation is realized with multiplicity one from an element of the nilpotent orbit whose associated partition is the transpose of the partition associated to the nilpotent orbit associated to the original $\calH_0$-representation via the Springer correspondence. In the example established in this paper, we see that the Bessel functional is associated to the character $\ve$ of $\calH_0$, which, via the Springer correspondence is associated to the nilpotent orbit parametrized $[2,1^2]$. However, the transpose of this partition is $[3,1]$, which does not appear in the parametrization of the nilpotent orbits of $\sp(4)$ given in Theorem \ref{thm:orbits2}. The issue here is that, while, for type $A_{n-1}$, the transpose is an order-reversing involution of the Hasse diagram for the nilpotent orbits of $\sl(n)$, the analogous involution for type $C_n$ is a bit more complicated. In particular, if the partition ${\bf d}$ is associated to a given nilpotent orbit, but ${\bf d}^{\top}$ is not associated to any nilpotent orbit, then we follow further instructions in \cite{CM} for how to manipulate ${\bf d}^{\top}$ in order to find the image of ${\bf d}$ in the order-reversing involution; these manipulations are referred to as the $C$-collapse of ${\bf d}^{\top}$. In the case of the partition $[3,1] = [2,1^2]^{\top}$, the $C$-collapse of this partition is $[2^2]$, which is exactly the orbit containing our original nilpotent element $A$ in Section \ref{sec:besselmodel}.

When we generalize this conjecture to $\sp(2n)$, we see that it correctly associates the sign character with $[2n]$ and the trivial character with $[1^{2n}]$, but that it incorrectly associates $\ve$ with $[2,1^{2n-2}]$; we know from Section \ref{sec:besselmodel} that the gGGr whose $J$-fixed vectors contain $\ve$ is constructed from a representative from the orbit $[2^n]$. With this in mind, we now believe that the path from an irreducible representation of the Hecke algebra to its associated gGGr goes through the Langlands dual group, $^LG$, of $G$ (recall that both $G$ and $^LG$ have the same Weyl group, so having this correspondence go through the dual group versus through $G$ is not something that would be detectable from $\calH$). Inspired by \cite{Ginz}, our idea is that, in order to determine from which nilpotent orbit $A$ should be chosen, we start with an irreducible representation $\tau$ of $\calH_0$ and apply the Springer correspondence to $^LG$ to get the pair $({\bf d}, \mu)$. We then take $\overline{\bf d}$ to be the image of ${\bf d}$ under the appropriate order-reversing involution, $\iota$, of the set of nilpotent orbits, and pick $A$ from the special orbit of $G$ corresponding to $\overline{\bf d}$ under the bijection, $\beta$, between the set of special nilpotent orbits of $G$ and the set of special nilpotent orbits of $^LG$. In types $B_n$ and $C_n$, a special nilpotent orbit is simply a nilpotent orbit ${\bf d}$ such that ${\bf d}^{\top}$ is also a nilpotent orbit. The partial order on the set of nilpotent orbits is defined as follows: geometrically, if $\calO$ and $\calO'$ are two nilpotent orbits, then we say that $\calO \leq \calO'$ if $\overline{\calO} \subset \overline{\calO}'$, where $\overline{\calO}$ refers to the Zariski closure of $\calO$; translated to our parametrizations, we have that ${\bf d} \leq {\bf d}'$ if $$\sum_{1 \leq j \leq k} d_j \leq \sum_{1 \leq j \leq k} d_j' \textup{ for $1 \leq k \leq N$},$$ where ${\bf d} = [d_1,\hdots, d_N]$, ${\bf d}' = [d_1',\hdots, d_N']$ are partitions of $N$. This partial order on partitions is referred to as \emph{dominance order}. The emphasis that is placed on special orbits in this program is due to a result of M\oe glin in \cite{Moeg} regarding the Fourier coefficients of smooth, irreducible representations of $G$ - to summarize, if $\pi$ is such a representation, then the Fourier coefficients of $\pi$ are associated to unipotent orbits. Looking at the set of unipotent orbits associated to the non-zero Fourier coefficients of $\pi$, M\oe glin proved in Theorem 1.4 in \cite{Moeg} that the maximal orbits in this set were special orbits.

We end this paper by offering a formal conjecture regarding the connection between the characters of $\calH_0$ and the gGGr's, along with the evidence that has been compiled so far that supports this conjecture. In the following conjecture, $G$ will denote a split, connected reductive group over $F$.

\begin{Conjecture}\label{conj:springer}
Let $\tau$ be a linear character of $\calH_0$, and let ${\bf d}$ be the nilpotent orbit of $^LG$ associated to $\tau$ via the Springer correspondence and Tits' deformation theorem. Let ${\bf d}'$ be the special nilpotent orbit ${\bf d}' := \beta(\iota({\bf d}))$ of $G$. If $A$ is a representative of ${\bf d}'$, then the gGGr $\Gamma_{A,1}$ (as defined in Section \ref{sec:intro}) is a model for the universal principal series $\ind_B^G\chi_{\textup{univ}}^{-1}$ satisfying the diagram \eqref{diagram}.
\end{Conjecture}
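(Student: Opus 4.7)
The plan is to attack Conjecture \ref{conj:springer} character by character, following the template that succeeded in the three cases already in the literature (trivial/spherical via \cite{BBF2}, sign/Whittaker via \cite{BBL}, and $\ve$/Bessel via Theorem \ref{thm:main}). First, given a linear character $\tau$ of $\calH_0$, I would make the geometric data explicit: compute ${\bf d}$ through the Springer tables for $^LG$ (effective for classical groups via bi-partition combinatorics), apply the order-reversing involution $\iota$ together with the bijection $\beta$ on special orbits to produce ${\bf d}'$, and then read off a representative $A$, the grading $\goth{g} = \bigoplus_i \goth{g}(i)_A$, the unipotent radical $U_A$, the Levi centralizer $Z_L(A)$, and the character $\psi_A$ from the Bala--Carter/weighted-Dynkin datum. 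All of this is combinatorial bookkeeping.

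Second, with $(\overline{U}_A, Z_L(A), \psi_A)$ in hand I would prove uniqueness of the embedding $\ind_B^G(\chi_{\textup{univ}}^{-1}) \hookrightarrow \Gamma_{A,1}$ by running the Bruhat--Mackey analysis of Section \ref{sec:mackey}: refine the double-coset decomposition $B\backslash G/(\overline{U}_A Z_L(A))$ using an analogue of Lemma \ref{lemma:bruhat}, then eliminate all but one coset via the compatibility criterion \eqref{compatibility}. Existence and an integral realization of the resulting functional $\calL$ on the big cell would follow exactly as in the proof of Theorem \ref{thm:functional}, yielding $\calL(\phi) = \int_{Z_L(A) \cap G(\goth{o})}\int_{\overline{U}_A}\psi_A(u)\phi(uz)\,du\,dz$ up to a normalizing torus power.

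Third, to identify $\calL$ as an intertwiner to $V_\tau$, I would compute the rank-one intertwining constants $c_\alpha$ for each simple root by the method of Proposition \ref{intertwiners}: express $s_\alpha$ through the rank-one Bruhat decomposition, move the unipotent radical across, and identify the resulting local integral either as a Whittaker-style gamma factor (giving $c_\alpha = \pi^{\alpha^\vee} - q^{-1}$, hence $\tau(T_{s_\alpha}) = -1$) or as a trivial shell integral (giving $c_\alpha = 1 - q^{-1}\pi^{\alpha^\vee}$, hence $\tau(T_{s_\alpha}) = q$). The dichotomy is controlled by whether the $\sl_2$ corresponding to $\alpha$ sits in the piece of the grading supporting $\psi_A$ or is absorbed by $Z_L(A)$. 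The Bernstein relation \eqref{bernstein} then converts these constants into the operator description \eqref{heckeoperator} and the Hecke intertwiner claim follows as in Section \ref{sec:proof}.

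The main obstacle is the final structural step: showing \emph{a priori}, rather than case-by-case, that the dichotomy among simple roots produced by the orbit $\beta(\iota({\bf d}))$ agrees with the eigenvalue pattern of $\tau$ under the Springer correspondence on $^LG$. Lusztig's theory of families, Kawanaka's multiplicity-one conjecture in the finite-field setting, and M\oe glin's results on Fourier coefficients (\cite{Moeg}) all point toward the right framework, but lifting any of these to a uniform $p$-adic statement will require genuinely new input. A tractable first milestone within the paper's setting is to resolve the remaining case for $\GSp(2n)$, namely the character $\sigma$: compute $\beta(\iota([1^{2n}]))$ explicitly on the $\so(2n+1)$ side, identify the resulting gGGr datum with the Whittaker--Orthogonal construction of \cite{BFG}, and check compatibility with Proposition \ref{prop:shalika}. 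Success there would complete the conjecture for $\GSp(2n)$ and, more importantly, suggest how Lusztig's cells index the passage from $\tau$ to $(\overline{U}_A, Z_L(A), \psi_A)$ in general.
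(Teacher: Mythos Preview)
The statement is a \emph{conjecture}, and the paper does not prove it; it only offers it together with the three already-established cases (spherical, Whittaker, Bessel) and the $\Sp(6)$ example as supporting evidence. So there is no ``paper's own proof'' to compare against. Your document is, appropriately, a research outline rather than a proof, and you are honest about this: you flag the main structural obstacle (matching the simple-root dichotomy coming from the orbit $\beta(\iota({\bf d}))$ with the eigenvalue pattern of $\tau$) as the step requiring genuinely new input. That diagnosis is accurate and matches the spirit of the paper's closing discussion.

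Two cautions about your plan. First, your Step~2 (uniqueness via Bruhat--Mackey) is not a routine exercise even in the Bessel case: the paper itself must \emph{assume} Theorem/Conjecture~\ref{conjecture2} for $n>2$, and the double-coset elimination in Lemma~\ref{lemma:bruhat} and the proof of Theorem~\ref{thm:functional} uses the rank-$2$ structure in an essential way (e.g.\ the condition $\omega_1 \notin (F^{\times})^2$). For a general orbit ${\bf d}'$ you should expect this step to be as hard as, or harder than, the intertwining-constant computation. Second, your proposed ``tractable first milestone'' for $\sigma$ on $\GSp(2n)$ runs into the degeneracy the paper explicitly notes in the remark following Conjecture~\ref{conj:springer}: for $G=\Sp(4)$ there are only three special orbits, so the construction cannot even produce a gGGr for $\sigma$. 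Any attempt to connect $\sigma$ to a WO-type model must therefore go outside the conjecture as stated (and indeed the WO-model lives on $\SO(2n+2)$, not on $G$), so this case will not ``complete the conjecture for $\GSp(2n)$'' but rather test whether the conjecture needs amendment.
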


\begin{remark}
Note that the representation $\mu$ in the pair $({\bf d}, \mu)$ associated to $\tau$ via the Springer correspondence does not play a significant role in identifying $A$.
\end{remark}

\begin{remark}
For $G = \Sp(4)$, there are only three special orbits, implying that the spherical, Whittaker, and Bessel models are the only models that are realized via this program in this case. In particular, this case can be considered degenerate, as we cannot associate a gGGr to the remaining character of $\calH_0$.
\end{remark}

As an example, consider the case where $G = \Sp(6)$, whence $^LG = \SO(7)$. In this case, we have the following list of special orbits, listed according to the partial order described above:
\begin{equation}\label{table:orbits}
\begin{tabular}{|l|l|}
\hline
$\Sp(6)$ & $\SO(7)$ \\
\hline
$[6]$ & $[7]$ \\
\hline
$[4,2]$ & $[5,1^2]$ \\
\hline
$[3^2]$ & $[3^2,1]$ \\
\hline
$[2^3]$ & $[3,2^2]$ \\
\hline
$[2^2,1^2]$ & $[3,1^4]$ \\
\hline
$[1^6]$ & $[1^7]$ \\
\hline
\end{tabular}
\end{equation}

The bijection between orbits of $G$ and $^LG$ mentioned above is simply the one suggested by the partial ordering, which is depicted in Table \eqref{table:orbits}. Thus, according to Conjecture \ref{conj:springer}, we see that $\ve$ corresponds to the pair $([3^2,1],1)$ for $^LG = \SO(7)$. Since $[3^2,1]$ is a special orbit, its transpose $[3,2^2]$ is its image under the usual order-reversing involution, and we see that $[3,2^2]$ corresponds to $[2^3]$ under the bijection between special nilpotent orbits of $G$ and special nilpotent orbits of $^LG$, as desired.

We also point out that the trivial character of $\calH_0$ still corresponds to $[1^{2n}]$ under this new conjecture, and the sign character still corresponds to $[2n]$. One can check that this new conjecture is also compatible with our results in this paper regarding $\ve$ for $n=2$. Additionally, one can check that this conjecture is compatible with the results of \cite{BBF2}, in which $G = \SO(2n+1)$ and $^LG = \Sp(2n)$.

\newpage
\bibliographystyle{amsplain}
\bibliography{bessel}
\end{document}